\newcommand{\mylabel}[2]{#2\def\@currentlabel{#2}\label{#1}}
\newcommand{\abs}[1]{\left|#1\right|}
\newcommand{\bdry}[1]{\partial #1}
\newcommand{\bgset}[1]{\big\{#1\big\}}
\newcommand{\closure}[1]{\overline{#1}}
\newcommand{\dint}{\ds{\int}}
\newcommand{\dist}[2]{\text{dist}\, (#1,#2)}
\newcommand{\ds}[1]{\displaystyle #1}
\newcommand{\eps}{\varepsilon}
\newcommand{\goodchi}{\protect\raisebox{2pt}{$\chi$}}
\newcommand{\interior}[1]{#1^\circ}
\newcommand{\ip}[2]{\left<#1,#2\right>}
\newcommand{\norm}[2][]{\left\|#2\right\|_{#1}}
\renewcommand{\O}{\text{O}}
\renewcommand{\o}{\text{o}}
\newcommand{\QED}{\mbox{\qedhere}}
\newcommand{\restr}[2]{\left.#1\right|_{#2}}
\newcommand{\seq}[1]{\left(#1\right)}
\newcommand{\set}[1]{\left\{#1\right\}}
\newcommand{\smset}[1]{\{#1\}}
\newcommand{\strictsubset}{\subset \subset}
\newcommand{\vol}[1]{\L(#1)}
\newcommand{\B}{{\mathcal B}}
\newcommand{\D}{{\mathcal D}}
\renewcommand{\L}{{\mathcal L}}
\newcommand{\M}{{\mathcal M}}
\newcommand{\calN}{{\mathcal N}}
\newcommand{\N}{\mathbb N}
\newcommand{\R}{\mathbb R}
\DeclareMathOperator{\divg}{div}
\DeclareMathOperator{\supp}{supp}
\newtheorem{lemma}{Lemma}[section]
\newtheorem{proposition}[lemma]{Proposition}
\newtheorem{theorem}[lemma]{Theorem}
\newtheorem{corollary}[lemma]{Corollary}
\theoremstyle{definition}
\newtheorem{definition}[lemma]{Definition}
\numberwithin{equation}{section}
\title{Higher critical points
in an elliptic free boundary problem}
\date{}
\author{David Jerison and Kanishka Perera}
\thanks{The first-named author was supported
by NSF grants DMS 1069225, DMS 1500771, and the Stefan Bergman Trust.}
\thanks{This work was initiated while the second-named author
was visiting the Department of Mathematics at the Massachusetts Institute
of Technology, and he is grateful for the kind hospitality of the department.}
\address{David Jerison, Department of Mathematics, Massachusetts Institute of Technology, 77  Massachusetts Avenue, Cambridge, MA 02139.}
\email{jerison@math.mit.edu}
\address{Kanishka Perera, Department of Mathematical Sciences,
Florida Institute of Technology,
Melbourne, FL 32901.}
\email{kperera@fit.edu}
\subjclass[2010]{35R35, 35J20}
\keywords{superlinear elliptic free boundary problems, higher critical points,
existence, nondegeneracy, regularity, variational methods, Nehari manifold}
\begin{document}

%\maketitle

\begin{abstract}
We study higher critical points of the variational functional
associated with a free boundary
problem related to plasma confinement.
Existence and regularity of minimizers in elliptic free boundary
problems have already been studied extensively.   But because
the functionals are not smooth,
standard variational methods cannot be used directly to prove the
existence of higher critical points. Here we find a nontrivial
critical point of mountain pass type and prove
many of the same estimates known for minimizers, including
Lipschitz continuity and nondegeneracy.  We then show that
the free boundary is smooth in dimension $2$ and prove
partial regularity in higher dimensions.

%deduce partial regularity
%of the free boundary, namely, it has finite
%Hausdorff measure and, except on
%it is smooth and satisfies  the free boundary condition
%in the classical sense.  In dimension $2$ we prove full regularity,
%that is, the singular subset of the free boundary is
%empty.

\end{abstract}

\maketitle

\section{Introduction}\label{sec: intro}

In this paper we consider a superlinear free boundary problem
related to plasma confinement (see, e.g.,
\cite{MR587175,MR1644436,MR1360544,MR1932180,MR0412637,MR0602544}).
Let $\Omega$ be a bounded domain in $\R^N$ with smooth boundary,
and define the functional
\[
J(v) =\int_\Omega \left[\frac{1}{2}\, |\nabla v|^2 + Q_p(x,v)\right] \, dx
\]
with
\[
Q_p(x,v) =  \goodchi_{\set{v > 1}}(x) - \frac{1}{p}\, (v - 1)_+^p
\]
for $2 < p < \infty$ if $N = 2$ and for $2 < p < 2N/(N - 2)$ if $N \ge 3$.
We seek a non-minimizing critical point of this functional in
the usual Sobolev space $H^1_0(\Omega)$, the closure of $C_0^\infty(\Omega)$
in the norm
\[
\norm{v}^2 = \int_\Omega |\nabla v|^2\, dx.
\]
The critical point $u$ of $J$ that we construct is Lipschitz continuous
in $\bar \Omega$.  The region
\[
\set{u>1} \subset\subset \Omega
\]
represents the plasma, and the boundary of the plasma,
\[
F(u) : = \partial \{x\in \Omega:  u(x) >1\},
\]
is the free boundary.

The function $u$ satisfies the following interior Euler-Lagrange equation
\begin{equation}\label{eq:interior}
-\Delta u  = (u-1)_+^p, \quad \mbox{in} \quad \Omega \setminus{F(u)},
\end{equation}
where $w_\pm = \max \set{\pm w,0}$ denote the positive and negative parts of $w$,
respectively.
The function $u$ also satisfies, in various generalized forms,
the free boundary condition
\[
|\nabla u^+|^2 - |\nabla u^-|^2  = 2 \quad \text{on }\  F(u),
\]
where $\nabla u^\pm$ are the limits of $\nabla u$ from
$\set{u > 1}$ and $\interior{\set{u \le 1}}$, respectively.
The ultimate goal is to show that at most (or all) points, the free
boundary is smoooth, and at those points the free boundary condition
is satisfied in the ordinary, classical sense.

The assumption $p>2$ makes the Euler-Lagrange  equation
superlinear, which helps us to prove existence of a nontrivial mountain
pass solution.  We also make use of the assumption $p>2$
in proving important nondegeneracy properties of $u$ that
lead to regularity of the free boundary.
The upper limitation on $p$ is imposed so that the inclusion from
$H^1_0(\Omega)$ to $L^p(\Omega)$ is a compact.  The limiting
exponent $p  = 2N/(N-2)$, $N\ge 3$, is treated in \cite{yangyang-perera}.

%\begin{equation} \label{1}
%\left\{\begin{aligned}
%- \Delta u & = (u - 1)_+^{p-1} && \text{in }
%\Omega \setminus \bdry{\set{u > 1}}\\[10pt]
%|\nabla u^+|^2 - |\nabla u^-|^2 & = 2 && \text{on } \bdry{\set{u > 1}}\\[10pt]
%u & = 0 && \text{on } \bdry{\Omega},
%\end{aligned}\right.
%\end{equation}

Our first theorem, Theorem \ref{Theorem 1}, says that there is
a Lipschitz continuous mountain pass solution to the variational problem.
Our second theorem, Theorem \ref{thm:nondegeneracy},
says that this solution is nondegenerate and satisfies
the free boundary condition in the sense of viscosity.
Our third theorem, Theorem \ref{thm:corollary}, establishes
full regularity of the free boundary in dimension $2$
and partial regularity in higher dimensions.
We believe that these are the first results in the literature
to address existence and regularity of higher critical points of free boundary
functionals.  This paper is an improvement on our preprint
\cite{Jerison-Perera}, which established weaker partial regularity
of the free boundary.

For minimizers there is a large literature proving existence
and partial regularity of the free boundary.
(See, for example,
\cite{MR618549,MR732100,MR1044809,MR2145284,MR861482,MR990856,MR1029856,MR973745,MR1620644,MR1759450}
%these were omitted MR1906591,MR2082392,MR2572253,MR1009785,MR0440187,MR1664689,MR2281453,[REDUCE]
and the references therein). Our results are less general
than those for minimizers, which apply to
many more classes of potentials than $Q_p(x,v)$.  We chose this family
of potential functions because we are able to prove that the corresponding
functional has a nontrivial mountain pass solution.
In addition to being less general, our results give
less regularity for the free boundary than is valid
for minimizers. We have only proved
that our critical point has a smooth free boundary in dimension
$2$.  We conjecture that our results are best possible
in the sense that there does exist an axisymmetric
mountain pass solution in dimension $3$ with a singular free boundary point
resembling the example in \cite{MR618549}.
In the case of minimizers, the best results to date are that the
free boundary is
smooth everywhere in all dimensions $N\le 4$
and has singularities on a closed set of Hausdorff dimension at most $N-5$ in
higher dimensions (see \cite{MR2082392,Jerison-Savin,MR2572253}).

% [CHANGE!]
%We expect that the mountain pass solution
%has a smooth free boundary in dimension $2$, but we have only
%proved that it is smooth except on a closed set
%of zero $1$-Hausdorff measure.

To formulate our results more precisely, we recall the definition
of a mountain pass point.
\begin{definition}[Hofer \cite{MR812787}]
We say that $u \in H^1_0(\Omega)$ is a mountain pass point of $J$ if the set $\set{v \in U : J(v) < J(u)}$ is neither empty nor path connected for every neighborhood $U$ of $u$.
\end{definition}
Let
\[
\Gamma = \set{\gamma \in C([0,1],H^1_0(\Omega)) : \gamma(0) = 0,\, J(\gamma(1)) < 0}
\]
be the class of continuous paths from $0$ to the
set $\set{u \in H^1_0(\Omega) : J(u) < 0}$, and denote
\[
c^* = c^*(\Omega)  := \inf_{\gamma \in \Gamma}\, \max_{u \in \gamma([0,1])}\, J(u).
\]
It will follow from an integration by parts that our mountain pass point
$u$ belongs to the Nehari-type manifold
\[
\M = \set{u \in H^1_0(\Omega) : \int_{\set{u > 1}} |\nabla u|^2\, dx = \int_{\set{u > 1}} (u - 1)^p\, dx > 0}.
\]
Our first main result is the following.
\begin{theorem} \label{Theorem 1}  Let $\Omega$ be a smooth bounded
domain in $\R^N,\, N \ge 2$, and $J$ as above.
Then

a) $c^* = c^*(\Omega) >0$.

b) The functional $J$ has a mountain pass point $u$ satisfying
$J(u) = c^*$, and $u$ minimizes $\restr{J}{\M}$. In particular,
by part (a) the solution is nontrivial.

c)  The function $u$ is Lipschitz continuous on $\bar \Omega$ solving
the interior Euler-Lagrange equation \eqref{eq:interior}.
%\begin{equation}\label{eq:interior}
%-\Delta u = (u-1)_+^p \quad \mbox{in} \quad \Omega \setminus \set{u=1}.
%\end{equation}
Moreover, $u$ solves the free boundary condition in the variational
sense of Definition \ref{def:variational}.
\end{theorem}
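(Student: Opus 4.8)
The plan is to obtain the mountain pass value $c^*$ as a critical value by first regularizing the nonsmooth potential $Q_p$, applying a standard deformation/minimax argument to the smoothed functionals, and then passing to the limit. Concretely, for $\eps>0$ I would replace $\goodchi_{\set{v>1}}$ by a smooth nondecreasing approximation $\beta_\eps(v-1)$ (with $\beta_\eps(t)\to\goodchi_{\set{t>0}}$ pointwise and $0\le\beta_\eps\le 1$), obtaining $C^1$ functionals $J_\eps$ on $H^1_0(\Omega)$ that satisfy the Palais--Smale condition (here the subcritical growth $p<2N/(N-2)$ and the compact embedding $H^1_0\hookrightarrow L^p$ are used) and have the mountain pass geometry uniformly in $\eps$: $J_\eps(0)=0$, $J_\eps$ is bounded below near $0$ on a small sphere by a constant independent of $\eps$, and there is a fixed $v_0$ with $J_\eps(v_0)<0$. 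The superlinearity $p>2$ is exactly what produces such a $v_0$ (dilating a fixed bump drives $J_\eps$ to $-\infty$). By the mountain pass theorem each $J_\eps$ has a critical point $u_\eps$ at level $c_\eps^*$, and one checks $c_\eps^*\to c^*$; part (a), $c^*>0$, follows from the uniform lower bound on the sphere. Hofer's characterization of the mountain pass point is inherited in the limit because the minimax level is a genuine mountain pass level, not merely a saddle.

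Next I would establish uniform estimates on $u_\eps$. Testing the Euler--Lagrange equation for $u_\eps$ against $u_\eps$ itself and using the mountain pass level bound gives a uniform $H^1_0$ bound; the subcritical nonlinearity then upgrades this to uniform $L^\infty$ and, by interior elliptic estimates away from $\set{u_\eps=1}$, to uniform local Lipschitz bounds. The key quantitative input — and the place where the argument is most delicate — is a uniform linear growth (nondegeneracy-type) estimate near the free boundary, proved by comparison with harmonic functions and barriers in the region $\set{u_\eps>1}$, using that the right-hand side $(u_\eps-1)_+^p$ is bounded; this is what prevents the gradient from blowing up as $\eps\to0$ and yields a global Lipschitz bound on $\bar\Omega$ independent of $\eps$. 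With this in hand, $u_\eps\to u$ in $C^{0,\alpha}_{loc}$ and weakly in $H^1_0$, $u$ is Lipschitz on $\bar\Omega$, and $u$ solves \eqref{eq:interior} in $\Omega\setminus F(u)$ by passing to the limit in the interior equation. That $u$ attains the level $c^*$ and lies in $\M$ follows by integrating by parts in the equation for $u$ (which gives $\int_{\set{u>1}}|\nabla u|^2=\int_{\set{u>1}}(u-1)^p$) together with lower semicontinuity and the strong $L^p$ convergence, which also forces $J(u)\le c^*$; combined with $J(u)\ge c^*$ from the minimax inequality, equality holds, and the same Nehari identity shows $u$ minimizes $\restr{J}{\M}$.

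Finally, for the free boundary condition in the variational sense of Definition~\ref{def:variational}, I would use domain variations: for a vector field $\Phi\in C_0^\infty(\Omega;\R^N)$ consider the inner variation $t\mapsto u(x+t\Phi(x))$ (equivalently $u_t(x)=u(x-t\Phi(x))$) and compute $\frac{d}{dt}\big|_{t=0}J(u_t)$. Since $u$ is a critical point obtained by a minimax that is stable under such reparametrizations, this derivative vanishes, yielding the stationarity identity
\[
\int_\Omega\left[\left(\tfrac12|\nabla u|^2 - \tfrac1p(u-1)_+^p + \goodchi_{\set{u>1}}\right)\divg\Phi - \nabla u\cdot D\Phi\,\nabla u\right]dx = 0,
\]
which is precisely the weak (distributional) form of $|\nabla u^+|^2-|\nabla u^-|^2=2$ on $F(u)$; the Lipschitz regularity of $u$ makes every term integrable and justifies the differentiation under the integral. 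The main obstacle I anticipate is the uniform Lipschitz bound near $F(u_\eps)$: one must show the approximating free boundaries do not develop steep gradients, i.e. prove the nondegeneracy estimate with constants independent of $\eps$, and this requires carefully chosen barriers adapted to the bounded source term $(u_\eps-1)_+^p$ together with the uniform $L^\infty$ bound. Everything else — PS condition, mountain pass geometry, passage to the limit, and the domain-variation computation — is comparatively routine given the subcritical, superlinear structure.
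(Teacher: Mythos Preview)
Your broad outline---regularize, apply the mountain pass theorem to $J_\eps$, obtain uniform estimates, pass to the limit---matches the paper's strategy.  However, several of the key steps are either misattributed or have real gaps.

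\textbf{The uniform Lipschitz bound.}  You describe this as coming from ``comparison with harmonic functions and barriers,'' but in fact the paper invokes a specific, nontrivial result of Caffarelli--Jerison--Kenig (their Proposition~\ref{Proposition 3}): if $\pm\Delta u \le A(\eps^{-1}\chi_{\{|u-1|<\eps\}}+1)$, then $|\nabla u|$ is bounded on half-balls independently of $\eps$.  This is not a barrier argument and is the genuine technical input you are missing.

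\textbf{The identity $J(u)=c^*$ and minimality on $\M$.}  Your claim ``$J(u)\ge c^*$ from the minimax inequality'' is not justified: $u$ is not a priori on any admissible path.  The paper instead proves $c^*\le \inf_\M J$ by constructing, for each $v\in\M$, an explicit path $\zeta_v(s)=v^-+s\,v^+$ whose maximum along the path is exactly $J(v)$ (Proposition~\ref{Proposition 9}).  Since the limit $u$ lies in $\M$ (Lemma~\ref{lem: gen.Nehari}), one gets $c^*\le J(u)$.  The reverse inequality $J(u)\le \liminf c_j \le c^*$ requires \emph{strong} $H^1_0$ convergence (not just weak, as you write), which the paper establishes by a separate integration-by-parts argument (Lemma~\ref{Lemma 6}\ref{6.ii}).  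The sandwich then gives $J(u)=c^*=\inf_\M J$ and, crucially, $c_j\to c^*$.

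\textbf{The variational (domain-variation) identity.}  Your plan is to differentiate $t\mapsto J(u(\cdot+t\Phi))$ directly and assert the derivative vanishes ``since $u$ is a critical point obtained by a minimax that is stable under such reparametrizations.''  This does not work: $J$ is not even continuous, Hofer's topological notion of mountain pass point carries no Euler--Lagrange information, and there is no reason the one-parameter family $t\mapsto J(u_t)$ is critical at $t=0$.  The paper instead writes the domain-variation identity for each $u_j$ (where $J_{\eps_j}$ is $C^1$, so this is legitimate; see \eqref{eq:eps.variation}) and passes to the limit.  The only problematic term is $\int_\Omega \B((u_j-1)/\eps_j)\,\divg\Phi\,dx$, and its convergence to $\int_{\{u>1\}}\divg\Phi\,dx$ uses exactly the energy-level convergence $c_j\to c^*$ established above, via \eqref{eq:negsetcontrol}.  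Without that step the characteristic-function term cannot be controlled in the limit.
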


The following nondegeneracy is the fundamental estimate needed to
be able to establish more detailed properties of the
free boundary.
\begin{definition} \label{def:nondegenerate}
We say that a continuous function $u$ in $\bar \Omega$ is {\em nondegenerate}
if there exist constants $r_0,\, c > 0$ such that if $x_0 \in \set{u > 1}$ and $r := \dist{x_0}{\set{u \le 1}} \le r_0$, then $u(x_0) \ge 1 + cr$.
\end{definition}

Our other main results are as follows.
\begin{theorem}\label{thm:nondegeneracy}  The mountain pass solution
 $u$ in Theorem \ref{Theorem 1} is
nondegenerate in the sense of Definition \ref{def:nondegenerate}
and satisfies the free boundary condition in the sense of  viscosity,
namely, if there is a ball $B$ tangent to the free boundary
and a point $x_0\in \bdry \set{u>1} \cap \bdry B$,
then $u$ has an asymptotic expansion of the form
\[
u(x) =  \alpha \langle x-x_0, \nu\rangle_+ - \beta
\langle x-x_0, \nu\rangle_- + o(|x-x_0|), \quad x\to x_0,
\]
with
\[
\alpha >0, \quad \beta \ge 0, \quad \alpha^2 - \beta^2 = 2,
\]
where $\nu$ is the interior unit normal to $\bdry B$ at $x_0$
if $B\subset \set{u>1}$ and the exterior unit normal
if $B \subset \interior{\set{u\le 1}}$.
%(Definition \ref{def:viscosity}).
\end{theorem}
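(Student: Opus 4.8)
The plan is to prove nondegeneracy first and then to deduce the viscosity expansion from it, using in addition the Lipschitz bound of Theorem~\ref{Theorem 1}(c) and the variational free boundary condition of Definition~\ref{def:variational}, via a blow-up argument. For nondegeneracy I would run an Alt--Caffarelli-type competitor argument, adapted to the fact that $u$ minimizes only $\restr{J}{\M}$. Fix a small $c>0$, to be chosen universally, and suppose $x_0\in\set{u>1}$ has $r:=\dist{x_0}{\set{u\le1}}$ as small as we wish while $u(x_0)<1+c\,r$. Since $B_r(x_0)\subset\set{u>1}$ and $-\Delta(u-1)=(u-1)^p>0$ there, the Harnack inequality (using the global Lipschitz bound to control the right side) and interior gradient estimates give $0<u-1\le Cc\,r$ and $\abs{\nabla u}\le Cc$ on $B_{r/2}(x_0)$ once $r$ is small. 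Let $v$ equal $u$ outside $B_{r/2}(x_0)$, equal $1$ on $B_{r/4}(x_0)$, and be harmonic on the annulus with these boundary values; then $\set{v>1}\cap B_{r/2}(x_0)\subset B_{r/2}(x_0)\setminus B_{r/4}(x_0)$ and $\abs{\nabla v}\le Cc$ on the annulus. Comparing the two energies, the Dirichlet and concave terms of $J$ change by $\O(c^2 r^N)$ while $\int_\Omega\goodchi_{\set{v>1}}\,dx$ drops by at least $c_N r^N$, so that $J(v)\le J(u)-c_N r^N+\O(c^2 r^N)$.

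The one genuinely new point relative to minimizers is that $v$ need not lie on $\M$; here one exploits that $\norm{v-u}=\O(c\,r^{N/2})$ is small and, more precisely, that the defect of $v$ from $\M$ is as small as $\O(c^2 r^N)$. Projecting $v$ along its ray onto $\M$ then yields $t_v v\in\M$ with $t_v$ close to $1$ and $J(t_v v)=J(v)+\O(c^2 r^N)$. Combining with the previous estimate, $J(t_v v)\le J(u)-c_N r^N+\O(c^2 r^N)<J(u)=c^*$ once $c$ is chosen small (independently of $x_0$ and $r$), which contradicts $c^*=\min_{\M}J$. Hence $u(x_0)\ge 1+c\,r$, establishing nondegeneracy in the sense of Definition~\ref{def:nondegenerate} with this $c$ and a suitably small $r_0$.

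For the free boundary condition I would blow up at $x_0$: set $u_\rho(x)=\rho^{-1}\bigl(u(x_0+\rho x)-1\bigr)$. By Theorem~\ref{Theorem 1}(c) the $u_\rho$ are uniformly Lipschitz, so $u_\rho\to u_0$ locally uniformly on $\R^N$ along a subsequence, with $u_0$ Lipschitz and $u_0(0)=0$; since \eqref{eq:interior} rescales to an equation with right side $\o(1)$, $u_0$ is harmonic in $\set{u_0>0}$ and in $\interior{\set{u_0<0}}$. The tangent ball $B$ blows up to a half-space $H$ with inner normal $\nu$, so $H$ lies in one phase of $u_0$; a Hopf-lemma argument on the smooth side (where $u-1$, resp.\ $1-u$, is a nonnegative solution of an equation with bounded right side vanishing at $x_0$) produces a strictly positive slope there, while on the other side nondegeneracy prevents the plasma-side slope from vanishing. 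Using a Weiss-type monotonicity formula for $J$---valid here because the superlinear term contributes only lower-order corrections---one shows $u_0$ is a $1$-homogeneous global variational solution; since it has a half-space inside one phase, the only possibility is $u_0(x)=\alpha\langle x,\nu\rangle_+-\beta\langle x,\nu\rangle_-$, and the variational free boundary condition, preserved under the blow-up, forces $\alpha^2-\beta^2=2$ with $\alpha>0$ and $\beta\ge0$. Since $\alpha$ is pinned down as the one-sided normal derivative of $u-1$ at $x_0$ and $\beta$ is then determined, the blow-up is unique, and passing to the limit along the full family gives $u(x)=\alpha\langle x-x_0,\nu\rangle_+-\beta\langle x-x_0,\nu\rangle_-+\o(\abs{x-x_0})$.

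I expect the main obstacle to be this blow-up step: showing that the variational free boundary condition of Definition~\ref{def:variational} is stable under blow-up, that the free boundaries $\bdry\set{u_\rho>1}$ converge without loss of plasma measure, and that the $1$-homogeneous global variational solutions possessing a supporting half-space are precisely the two-plane functions. This is where the Weiss monotonicity formula must be set up with care in the non-minimizing setting and where the tangent-ball hypothesis is used to exclude more complicated blow-ups; once these facts are in hand, identifying the slope $\alpha$ and the rate $\o(\abs{x-x_0})$ is comparatively routine. By contrast, the nondegeneracy argument departs from the minimizer case only in the Nehari-projection estimate $J(t_v v)=J(v)+\O(c^2 r^N)$, which is routine given the transversality of $\M$ to rays at $u$ together with the quadratically small energy defect of the competitor.
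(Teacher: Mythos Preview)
Your nondegeneracy argument follows the paper's strategy: build a competitor that deletes a ball from $\{u>1\}$, project onto $\M$, and contradict $J(u)=\min_\M J$. One correction is needed: the projection onto $\M$ is not along the ray $t\mapsto t v$. Because the Nehari constraint is stated in terms of $(v-1)_+$, the set $\{tv>1\}$ moves with $t$ and the ray does not meet $\M$ in any tractable way. The paper instead uses the curve $\zeta_v(s)=v^-+s\,v^+$ with $v^+=(v-1)_+$, $v^-=1-(v-1)_-$, which freezes $\{v>1\}$ and rescales only the plasma excess; the projection is $\pi(v)=v^-+s_v v^+$ with the explicit $s_v$ of \eqref{71}, and the energy bookkeeping via Lemma~\ref{Lemma 8} then yields exactly the estimate you want. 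With this repair your nondegeneracy argument coincides with Proposition~\ref{Proposition 2}.

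For the viscosity conclusion your route diverges from the paper's and has a real gap. The paper does no blow-up here; it quotes Corollaries~7.1 and~7.2 of Lederman--Wolanski, applicable because $u$ is the uniform limit of the singularly perturbed solutions $u^{\eps}$ and is nondegenerate. Those results, however, yield only the \emph{weak} viscosity alternative of Definition~\ref{def:viscosity}: when $B\subset\{u>1\}$ the expansion is either the claimed two-plane with $\alpha^2-\beta^2=2$, $\beta\ge 0$, \emph{or} a two-sided positive profile $\alpha_1\langle x-x_0,\nu\rangle_+ + \alpha_2\langle x-x_0,\nu\rangle_-$. Excluding the latter is precisely Proposition~\ref{Theorem 3} (the density lower bound for $\{u\le 1\}$), proved separately in Section~7 by a different harmonic-replacement competitor together with the Nehari projection. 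Your blow-up approach runs into the same dichotomy: a $1$-homogeneous global variational solution with a half-space inside $\{u_0>0\}$ can perfectly well be $\alpha\,|\langle x,\nu\rangle|$, since when $\{u_0>0\}$ has full measure the term $\chi_{\{u_0>0\}}$ contributes no jump and the variational identity forces only $\alpha_1=\alpha_2$, not $\alpha^2-\beta^2=2$. Neither nondegeneracy of $u$, nor Weiss monotonicity, nor the variational identity rules this out; you need the volume bound $\L(\{u\le 1\}\cap B_r(x_0))\ge c\,r^N$, which is exactly Proposition~\ref{Theorem 3} and is not derivable from the ingredients you list.
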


\begin{theorem} \label{thm:corollary} The mountain pass solution $u$ in
Theorem \ref{Theorem 1} has a free boundary $\partial \{ u>1\}$ of
finite $(N-1)$-dimensional Hausdorff measure that is a $C^\infty$ hypersurface
except on a closed set of Hausdorff dimension at most $N-3$.
Near the smooth subset of the free boundary, $(u-1)_\pm$ are
smooth and the free boundary equation is satisfied in the classical sense.
If $N=2$, then the exceptional set is empty, that is, the free boundary
is smooth at every point. In dimension $N=3$, the free boundary
has at most finitely many nonsmooth points.
\end{theorem}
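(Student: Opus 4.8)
The strategy is to deduce the regularity theory from the two preceding theorems together with the (by now standard) machinery developed for minimizers of one-phase and two-phase Bernoulli-type functionals, specialized to the plasma potential $Q_p$. The key point to exploit is that Theorems \ref{Theorem 1} and \ref{thm:nondegeneracy} furnish exactly the two a priori ingredients on which that machinery rests: (i) $u$ is Lipschitz on $\bar\Omega$ and solves $-\Delta u = (u-1)_+^p$ away from $F(u)$, so $u-1$ has the Bernoulli scaling, with a bounded, Hölder-continuous right-hand side that is $o(1)$ at the free boundary; and (ii) $u$ is nondegenerate in the sense of Definition \ref{def:nondegenerate}, and the free boundary condition holds in the viscosity sense with $\alpha>0$, $\beta\ge 0$, $\alpha^2-\beta^2=2$. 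Together, Lipschitz regularity from above and linear nondegeneracy from below give uniform positive density estimates for both $\{u>1\}$ and $\{u\le 1\}^\circ$ along $F(u)$, which in turn give local finiteness of $\mathcal H^{N-1}(F(u))$; I would get this exactly as in the Alt--Caffarelli argument (covering $F(u)$ by balls on which the measure of each phase is comparable to $r^N$, then comparing $\int_{B_r}|\nabla\chi_{\{u>1\}}|$ to $r^{N-1}$ via the nondegeneracy), so that $\chi_{\{u>1\}}$ is a set of locally finite perimeter.

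Next I would run the blow-up analysis. Fix $x_0\in F(u)$, rescale $u_r(x)=(u(x_0+rx)-1)/r$; by Lipschitz bounds and nondegeneracy the $u_r$ are precompact in $C^{0,\gamma}_{\loc}$, and because the forcing term $(u-1)_+^p=O(r^p)$ after rescaling carries a factor $r^{p-2}\to 0$ (here is where $p>2$ is used again), every blow-up limit $u_0$ is a \emph{global two-phase solution} of the homogeneous problem: harmonic in $\{u_0>0\}$ and in $\{u_0<0\}^\circ$, with the Bernoulli condition $|\nabla u_0^+|^2-|\nabla u_0^-|^2=2$ in the viscosity sense, and $u_0$ is nondegenerate and globally Lipschitz. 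I would then invoke the classification/regularity results for such two-phase (or one-phase) free boundaries: where some blow-up is the two-plane solution $\alpha x_N^+-\beta x_N^-$, the free boundary is $C^{1,\gamma}$ near $x_0$ by the viscosity regularity theory (Caffarelli; De Silva; De Silva--Ferrari--Salsa), and then classical Schauder/elliptic bootstrapping upgrades it to $C^\infty$, at which point $(u-1)_\pm$ are smooth up to $F(u)$ and the free boundary condition holds classically. The ``bad'' set is where no blow-up is a half-plane solution; by the monotonicity formula (the Weiss-type energy, or the Alt--Caffarelli--Friedman / Caffarelli--Jerison--Kenig monotonicity for the two-phase case) this set is characterized by the density gap, is closed, and a dimension-reduction (Federer) argument bounds its Hausdorff dimension. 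The precise bound $N-3$ comes from the fact that there is no singular \emph{one-homogeneous} global minimizer/solution in $\R^2$ (the two-phase problem in the plane has only the half-plane solution), so the first dimension in which a singular cone can occur is $N=3$; this is where I would cite the planar classification and the stratification argument that yields $\dim_{\mathcal H}(\text{sing}) \le N-3$.

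Finally, the low-dimensional statements. For $N=2$: dimension reduction leaves a singular set of dimension $\le -1$, i.e.\ empty, so $F(u)$ is a $C^\infty$ curve everywhere; equivalently, every blow-up in the plane is a half-plane solution. For $N=3$: the singular set has dimension $\le 0$, and I would upgrade ``dimension $0$'' to ``finitely many points'' using compactness of $F(u)$ in $\bar\Omega$ together with an $\eps$-regularity statement — each singular point is isolated because near it the blow-up is a fixed singular cone and uniqueness of the blow-up (again via the monotonicity formula) prevents accumulation — exactly the pattern used for minimal surfaces and for Alt--Caffarelli minimizers in $\R^3$ in \cite{MR2082392, MR2572253}. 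The main obstacle I anticipate is \emph{not} the stratification bookkeeping but establishing that the relevant monotonicity formula is available for our critical point: $u$ is not a minimizer, so one cannot simply quote the Weiss monotonicity identity for minimizers. I expect one must verify, using the Euler--Lagrange equation \eqref{eq:interior}, the domain-variation (stationarity) identity $\frac{d}{dt}\big|_{t=0}J(u\circ\Phi_t)=0$ for compactly supported vector fields $\Phi_t$ — which is a consequence of $u$ being a critical point, the content of the ``variational sense'' free boundary condition in Theorem \ref{Theorem 1} — and from that derive an almost-monotone Weiss energy whose error term is controlled by the $r^{p-2}$ gain coming from $p>2$. Granting that stationarity identity, the rest is an adaptation of the minimizer theory, and the low forcing term makes the adaptation essentially mechanical.
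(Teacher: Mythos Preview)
Your plan is essentially the paper's own proof: blow-up at a free boundary point, homogeneity of the blow-up via a Weiss-type monotonicity formula derived from the domain-variation (stationarity) identity of Theorem \ref{Theorem 1}, classification of one-homogeneous solutions in $\R^2$, Federer dimension reduction \`a la Weiss, and Caffarelli's flat-implies-smooth machinery. You have correctly identified the only nonroutine point --- that Weiss monotonicity here must come from the variational identity of Definition \ref{def:variational} rather than from minimality --- and this is exactly how the paper proceeds (the appendix proves the monotonicity formula under precisely the stationarity hypothesis, with an $O(r^\alpha)$ error coming from the forcing $(u-1)_+^{p-1}$).

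One inaccuracy to flag: you write that ``Lipschitz regularity from above and linear nondegeneracy from below give uniform positive density estimates for both $\{u>1\}$ and $\{u\le 1\}^\circ$.'' The density of $\{u>1\}$ does follow this way, but the density of $\{u\le 1\}$ does \emph{not}; Lipschitz plus one-sided nondegeneracy cannot rule out cusps of $\{u\le 1\}$. In the paper this complementary density estimate is Proposition \ref{Theorem 3}, and its proof genuinely uses that $u$ minimizes $J$ on the Nehari manifold $\M$ (harmonic replacement plus projection onto $\M$). You need this density on the $\{u\le1\}$ side twice: to exclude the degenerate blow-up profile with $\beta<0$ (equivalently, $\{W>0\}$ on both sides of the free boundary), and for the $L^1$ convergence of $\chi_{\{w_j>0\}}$ to $\chi_{\{W>0\}}$ that passes the variational identity to the limit. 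So cite Proposition \ref{Theorem 3} rather than treating this as a routine consequence of (i) and (ii).

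A smaller remark: the paper uses the Weiss monotonicity formula specifically; the ACF/CJK two-phase monotonicity you mention as an alternative does not by itself give homogeneity of blow-ups. For the $N=3$ finiteness, the paper's argument is slightly different from the ``uniqueness of blow-up prevents accumulation'' you sketch: it shows instead that for any blow-up cone $W$ the free boundary of $W$ is smooth away from the origin (by a second blow-up at $y_0\neq 0$, which is translation-invariant in the $y_0$ direction, hence two-dimensional), so $\partial\{u>1\}$ is smooth in a punctured neighborhood of each singular point and a finite cover of the compact free boundary gives finitely many centers. Either argument works.
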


The proof of Theorem \ref{thm:corollary} depends on two propositions
of independent interest.  Define
\[
\delta_0 := \dist{\set{u > 1}}{\bdry{\Omega}} > 0.
\]
%For $0 < \delta < \delta_0$, let
%\[
%\calN_\delta(\set{u \ge 1}) = \set{x \in \Omega : \dist{x}{\set{u \ge 1}} \le \delta}
%\]
%be the $\delta$-neighborhood of $\set{u \ge 1}$. We have the following propositions.
\begin{proposition} \label{Theorem 2}
If $u$ is a nondegenerate, Lipschitz continuous interior solution
as in \eqref{eq:interior},
then there exists a constant $C > 0$ such that whenever $r \le \delta_0/2$,
%omitted this: B_{2r}(x_0) \subset \calN_{\delta_0/2}(\set{u \ge 1})$,
\[
\sigma(\bdry{\set{u > 1}} \cap B_r(x_0)) \le C r^{N-1},
\]
where $\sigma$ denotes $(N - 1)$-dimensional Hausdorff measure.
In particular, the free boundary $\bdry{\set{u > 1}}$ has
finite $(N - 1)$-dimensional Hausdorff measure.
\end{proposition}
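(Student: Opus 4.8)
The plan is to follow the measure-theoretic scheme of Alt and Caffarelli, with the nondegeneracy hypothesis playing the role that minimality plays there. Write $v := (u-1)_+$, so that $\nabla v = \goodchi_{\set{u > 1}}\,\nabla u \in L^\infty(\Omega)$ and, by \eqref{eq:interior}, $\Delta v = -\,(u-1)_+^p$ in the open set $\Omega \setminus F(u)$. First I would check that $\Delta v$ extends across $F(u)$ to a signed Radon measure on $\Omega$ with nonnegative singular part: testing the weak form of \eqref{eq:interior} with the Lipschitz function $\varphi\,\min\set{(u-1)_+/\delta,\,1}$ for $\varphi \in C_c^\infty(\Omega)$, $\varphi \ge 0$, and letting $\delta \downarrow 0$ gives
\[
\ip{\Delta v}{\varphi} \;=\; \lim_{\delta \to 0}\,\frac1\delta \int_{\set{1 < u < 1 + \delta}} \varphi\, \abs{\nabla u}^2\, dx \;-\; \int_{\set{u > 1}} (u-1)_+^p\, \varphi\, dx .
\]
Hence
\[
\mu \;:=\; \Delta v + (u-1)_+^p\, \goodchi_{\set{u > 1}}\, dx
\]
is a nonnegative Radon measure supported on $F(u)$ (morally $\mu = \abs{\nabla u^+}\, \sigma \lfloor F(u)$, though I will not need this). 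Since $F(u) \strictsubset \Omega$ is compact, the proposition will follow once we establish two matching density estimates for $\mu$ near $F(u)$ and feed them into a covering argument.

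\emph{Upper bound.} Let $x_0 \in F(u)$; because $F(u) \subset \closure{\set{u > 1}}$ and $\dist{\closure{\set{u > 1}}}{\bdry\Omega} = \delta_0$, we have $B_{2r}(x_0) \strictsubset \Omega$ whenever $r \le \delta_0/2$. Taking $\varphi \in C_c^\infty(B_{2r}(x_0))$ with $0 \le \varphi \le 1$, $\varphi \equiv 1$ on $B_r(x_0)$ and $\abs{\nabla\varphi} \le 2/r$, and using $\mu \ge 0$,
\[
\mu(B_r(x_0)) \;\le\; \int \varphi\, d\mu \;=\; -\int \nabla v \cdot \nabla\varphi\, dx + \int_{\set{u>1}} (u-1)_+^p\,\varphi\, dx \;\le\; \frac{2}{r}\,\norm[\infty]{\nabla u}\,\abs{B_{2r}} + C_0\,\abs{B_{2r}} \;\le\; C\, r^{N-1},
\]
where $C_0 := \norm[L^\infty(\Omega)]{(u-1)_+^p} < \infty$ (as $u$ is bounded) and $C$ depends only on $N$, $\norm[\infty]{\nabla u}$, $C_0$ and $\delta_0$. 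For a general center $x_0$, either $F(u) \cap B_r(x_0) = \emptyset$ and there is nothing to prove, or one recenters at a point of $F(u) \cap B_r(x_0)$ and doubles $r$.

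\emph{Lower bound.} Fix $y \in F(u)$. The spherical averages $A(s) := \fint_{\bdry B_s(y)} v\, d\sigma$ satisfy, for a.e.\ $s$, the elementary identity $A'(s) = (N\omega_N s^{N-1})^{-1}\,(\Delta v)\big(B_s(y)\big)$, where $\omega_N$ is the volume of the unit ball; since $v$ is continuous with $v(y) = 0$, integrating from $0$ to $\rho$ and substituting $\Delta v = \mu - (u-1)_+^p\,\goodchi_{\set{u>1}}\, dx$ yields
\[
A(\rho) \;=\; \frac{1}{N\omega_N}\int_0^\rho s^{1-N}\,\mu\big(B_s(y)\big)\, ds \;-\; \frac{1}{N\omega_N}\int_0^\rho s^{1-N}\!\!\int_{B_s(y)} (u-1)_+^p\,\goodchi_{\set{u>1}}\, dx\, ds ,
\]
where the last term is nonnegative and bounded by $C_0\rho^2/(2N)$. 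Nondegeneracy enters here: from Definition \ref{def:nondegenerate} together with \eqref{eq:interior} I would first derive the averaged lower bound $A(\rho) \ge c\,\rho$ for $\rho \le \rho_0$ --- equivalently, that $\set{u > 1}$ cannot be too thin near $F(u)$ --- whereupon the displayed identity forces $\int_0^\rho s^{1-N}\,\mu(B_s(y))\, ds \ge c'\rho$ for all small $\rho$. Combining this with the upper bound $\mu(B_s(y)) \le C s^{N-1}$ just proved, an elementary computation (assume $\mu(B_\rho(y)) < \eta\,\rho^{N-1}$, split the integral at the crossover scale $s_\ast = (\mu(B_\rho(y))/C)^{1/(N-1)}$, and bound it by $C''\eta^{1/(N-1)}\rho$, a contradiction once $\eta$ is small; a logarithmic variant handles $N = 2$) upgrades this to the matching lower bound
\[
\mu\big(B_\rho(y)\big) \;\ge\; c''\,\rho^{N-1}, \qquad y \in F(u),\ \rho \le \rho_0 .
\]

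\emph{Conclusion, and the main obstacle.} Given $x_0$ and $r \le \delta_0/2$, for each $\delta \le \rho_0$ cover $F(u) \cap B_r(x_0)$ by balls $B_\rho(y)$ with $y \in F(u)$ and $\rho \le \delta$, and extract by the Vitali $5r$-covering lemma a pairwise disjoint subfamily $\smset{B_{\rho_i}(y_i)}$ with $F(u) \cap B_r(x_0) \subset \bigcup_i B_{5\rho_i}(y_i)$ and each $B_{\rho_i}(y_i) \subset B_{2r}(x_0)$. Using the lower bound, disjointness, and then the upper bound,
\[
\mathcal H^{N-1}_{10\delta}\big(F(u) \cap B_r(x_0)\big) \;\le\; C_N \sum_i \rho_i^{N-1} \;\le\; \frac{C_N}{c''} \sum_i \mu\big(B_{\rho_i}(y_i)\big) \;\le\; \frac{C_N}{c''}\, \mu\big(B_{2r}(x_0)\big) \;\le\; C\, r^{N-1},
\]
where $\mathcal H^{N-1}_{10\delta}$ is the $\mathcal H^{N-1}$ pre-measure at scale $10\delta$; letting $\delta \downarrow 0$ gives $\sigma(\bdry\set{u>1} \cap B_r(x_0)) \le C r^{N-1}$, and covering the compact set $F(u)$ by finitely many such balls shows it has finite $(N-1)$-dimensional Hausdorff measure. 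The main obstacle is the averaged nondegeneracy $A(\rho) \ge c\,\rho$ used above: Definition \ref{def:nondegenerate} only controls $u$ at the deepest points of $\set{u > 1}$, and ruling out that the plasma degenerates into a thin sliver near a free boundary point --- so that this depth is comparable to the scale $\rho$ --- is precisely where the superlinear interior equation \eqref{eq:interior} must be exploited; the remainder of the argument is soft.
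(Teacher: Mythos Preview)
Your overall strategy---upper and lower density estimates for the Radon measure $\mu = \Delta(u-1)_+ + (u-1)_+^{p-1}\chi_{\set{u>1}}\,dx$ supported on $F(u)$, followed by Vitali covering---is sound and is the Alt--Caffarelli route. The paper takes a parallel but distinct path through energy integrals rather than $\mu$: its upper bound is the strip estimate $\int_{B_r(x_0)\cap\set{|u-1|<\tau}}|\nabla u|^2\,dx \le C\tau r^{N-1}$ (Lemma~\ref{Lemma 12}), its lower bound is $\int_{B_r(x_0)}|\nabla u|^2\,dx \ge \kappa r^N$ at free boundary points (Lemma~\ref{Lemma 11}), and the covering counts balls by comparing these two. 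Your upper bound on $\mu$ is cheaper than the paper's strip estimate, and your final covering is a bit cleaner because $\mu$ is already the right object to compare with $\sigma$.

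The gap you flag is real and is exactly the substantive step. Definition~\ref{def:nondegenerate} only gives linear growth at the center of the \emph{largest} inscribed ball, so $A(\rho)\ge c\rho$ does not follow; this is the ``thin sliver'' obstruction you describe. The paper resolves it in two stages. First, a ladder lemma (Lemma~\ref{Lemma 9}), proved by blow-up and contradiction: for $x_0\in\set{u>1}$ with $r=\dist{x_0}{\set{u\le 1}}$ small, there is $x_1\in\partial B_r(x_0)$ with $u(x_1)-1\ge(1+\lambda)(u(x_0)-1)$. Second, a chaining argument (Lemma~\ref{Lemma 10}, following Caffarelli--Salsa, Theorem~1.9) iterates the ladder starting from any point of $\set{u>1}$ near a free boundary point until the chain exits $B_r$, producing $\sup_{B_r(x_0)} u \ge 1+\gamma r$ for every $x_0\in F(u)$. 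From this and Lipschitz continuity, your averaged bound $A(\rho)\ge c\rho$ follows immediately (this is essentially the paper's Lemma~\ref{lem:nondeg.allr}), and the rest of your argument goes through. One small correction to your diagnosis: the interior equation \eqref{eq:interior} is used in the blow-up only to make the rescaled right-hand side $r_j^{p}v_j^{p-1}$ vanish, so that the limit is harmonic; superlinearity $p>2$ per se plays no role at this step. (Also, the exponent in $\Delta v$ should be $p-1$, not $p$; you inherited a typo from the displayed equation~\eqref{eq:interior}.)
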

\begin{proposition} \label{Theorem 3}
If $u$ is a nondegenerate, Lipschitz continuous interior solution
as in \eqref{eq:interior} that
minimizes $\restr{J}{\M}$, then there is a constant
$c > 0$ such that whenever $x_0 \in \bdry{\set{u > 1}}$ and $0 < r \le \delta_0/2$,
\begin{equation} \label{55}
c \le \frac{\vol{\set{u > 1} \cap B_r(x_0)}}{\vol{B_r(x_0)}} \le 1 - c,
\end{equation}
where $\L$ denotes the Lebesgue measure in $\R^N$.  Thus,
the topological boundary of $\set{u>1}$ coincides with
its measure-theoretic boundary.
\end{proposition}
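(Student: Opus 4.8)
\medskip

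The two estimates in \eqref{55} are local density estimates of the type Alt and Caffarelli proved for minimizers, and the concluding statement follows from them at once. Writing $E=\set{u>1}$: inequality \eqref{55} says that at every $x_0\in\bdry E$ the lower and upper Lebesgue densities of $E$ lie in $[c,1-c]$, so $x_0$ belongs to the measure-theoretic boundary of $E$, while conversely any point of the measure-theoretic boundary lies in $\closure{E}\cap\closure{\set{u\le 1}}=\bdry E$; hence the two boundaries coincide (and $E$ has finite perimeter by Proposition \ref{Theorem 2}). It suffices to prove \eqref{55} for all sufficiently small $r$, since for $r$ comparable to $\delta_0/2$ one combines the small-scale bound with the monotonicity of $r\mapsto\vol{E\cap B_r(x_0)}$ for the lower inequality and of $r\mapsto\vol{\set{u\le 1}\cap B_r(x_0)}$ for the upper one.

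\emph{Lower bound.} Fix $x_0\in\bdry E$; then $u(x_0)=1$ by continuity, so $u-1\le Lr$ on $B_r(x_0)$, $L$ being the Lipschitz constant. Using nondegeneracy in the supremum form $\sup_{B_\rho(x_0)}(u-1)\ge c_1\rho$ for small $\rho$ — for a minimizer of $\restr{J}{\M}$ this stronger form is forced by minimality, since a competitor erasing thin plasma at the free boundary would have strictly smaller energy — one picks $y\in\closure{B_{r/2}(x_0)}$ with $u(y)-1\ge c_1r/4$; Lipschitz continuity then gives $u>1$ on $B_{c_1r/(8L)}(y)\subset B_r(x_0)$, whence $\vol{E\cap B_r(x_0)}\ge c\,\vol{B_r(x_0)}$. (One may also bypass the supremum form: away from the free boundary $(u-1)_+$ is subharmonic, and since the free-boundary measure has the favorable sign (which here follows from minimality on $\M$), $(u-1)_+$ is subharmonic modulo the bounded, lower-order term $(u-1)_+^p$, whose Newtonian potential is $\O(r^2)$ on $B_r(x_0)$; the sub-mean-value inequality at a near-maximum point, together with $(u-1)_+\le 2Lr$ on $B_{2r}(x_0)$, then gives the density estimate.)

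\emph{Upper bound.} This is the heart of the matter, and here minimality on $\M$ is indispensable. Suppose $\vol{\set{u\le 1}\cap B_{r_k}(x_k)}/\vol{B_{r_k}}\to 0$ along some $x_k\in\bdry E$ and $r_k\le\delta_0/2$, and rescale to $u_k(x)=(u(x_k+r_kx)-1)/r_k$: these are uniformly Lipschitz with $u_k(0)=0$, satisfy $\sup_{B_\rho(0)}u_k\ge c\rho$ by nondegeneracy and $\vol{\set{u_k\le 0}\cap B_1}\to 0$, and solve a rescaled interior equation with right-hand side $\O(r_k^{1+p})\to 0$. A locally uniform subsequential limit $u_\infty$ then satisfies $u_\infty\ge 0$ on $B_1$, $u_\infty(0)=0$, $\sup_{B_\rho(0)}u_\infty\ge c\rho$, and is harmonic on $\set{u_\infty>0}$. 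Minimality on $\M$ localizes under blow-up to (local) minimality of the Alt--Caffarelli functional $\int\tfrac12|\nabla v|^2+\goodchi_{\set{v>0}}$ on $B_1$ — here one uses that $u$ is also a critical point of $J$, so the Nehari identity broken by a perturbation supported in $B_{r_k}(x_k)$ is restored, at negligible quadratic cost, by a perturbation supported away from that ball — and, by the standard theory for such minimizers, $\set{u_k\le 0}$ converges in measure to $\set{u_\infty\le 0}$, forcing $\vol{\set{u_\infty\le 0}\cap B_1}=0$. Thus the plasma of $u_\infty$ has full measure in $B_1$, so $u_\infty$ minimizes the Dirichlet energy there and is harmonic in $B_1$; the strong minimum principle then gives $u_\infty\equiv 0$, contradicting $\sup_{B_\rho(0)}u_\infty\ge c\rho$. (Equivalently one can argue directly, comparing $u$ with an Alt--Caffarelli-type competitor inside $B_r(x_0)$ that raises it above $1$.) The chief obstacle is exactly this step: transferring minimality through the blow-up — or making the direct competitor estimate quantitatively sharp — while controlling the Nehari constraint; the superlinear term is harmless, entering only at order $r^{N+p}$.
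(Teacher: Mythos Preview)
Your lower bound is correct and is exactly what the paper does: Lemma~\ref{Lemma 10} (the supremum-form of nondegeneracy, a consequence of the hypothesis via a chaining argument) provides a point $x_1\in B_{r/2}(x_0)$ with $u(x_1)\ge 1+\gamma r/2$, and Lipschitz continuity gives a ball $B_{\kappa r}(x_1)\subset\{u>1\}\cap B_r(x_0)$.

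The upper bound, however, has a genuine gap---and you flag it yourself (``the chief obstacle is exactly this step''). The assertion that minimality on $\M$ \emph{localizes under blow-up to local minimality for the Alt--Caffarelli functional} is precisely the content of the proposition, not an available tool. Your parenthetical mechanism (``the Nehari identity broken by a perturbation supported in $B_{r_k}(x_k)$ is restored, at negligible quadratic cost, by a perturbation supported away from that ball'') is not substantiated: the Nehari projection rescales $(u-1)_+$ \emph{globally}, and you give no estimate showing that this global rescaling costs only $o(r^N)$ in the relevant energy. Without that estimate, you cannot conclude that the blow-up limit is an Alt--Caffarelli minimizer, and the convergence-in-measure claim $\{u_k\le 0\}\to\{u_\infty\le 0\}$ is then unjustified (uniform convergence alone does not give it).

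The paper avoids the blow-up entirely and carries out the ``direct competitor estimate'' you allude to, at the original scale. Given a ball $B_r(x_0)$ with small $\sigma(\partial B_r(x_0)\cap\{u\le 1\})$ (obtained from small volume by Fubini), one replaces $u$ by its harmonic extension $v$ inside $B_r(x_0)$ to get a competitor $w$, and then \emph{projects} $w$ onto $\M$ via $\pi(w)=w^-+s_w\,w^+$. Three quantitative estimates (Lemma~\ref{Lemma 14}) drive the argument: a definite energy drop $\int|\nabla w|^2\le\int|\nabla u|^2-cr^N$ (from Lemma~\ref{lem:nondeg.allr} and the Poincar\'e inequality for $u-v$), a bound $\int_{\{v\le 1\}\cap B_r}|\nabla v|^2\le C\eps^{1/N}r^N$ controlling leakage into the wrong phase (via a Poisson-kernel lemma, Lemma~\ref{Lemma 13}), and an $O(r^{p+N})$ bound on the change in the $(u-1)_+^p$ term. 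These combine to give $s_w\le 1$ and then $J(\pi(w))<J(u)$, contradicting minimality on $\M$. The leakage estimate (Lemma~\ref{Lemma 13}) is the ingredient with no analogue in your sketch; it is what makes the Nehari-projection cost quantitatively small.
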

\noindent

Let us point out that the existence of a mountain pass solution is by no
means routine due to the lack of smoothness of $J$.  Indeed,
$J$ is not even continuous, much less of class $C^1$.  For
the functional in which the discontinuous term $\chi_{\set{u > 1}}$ is removed,
there is no difficulty in applying the mountain pass theorem, as, for
example, in Flucher and Wei \cite{MR1644436} and Shibata \cite{MR1932180}.

The outline of the proof of Theorem \ref{Theorem 1} is as follows.
In Section 2, we introduce
an approximation $J_\epsilon$ to the functional $J$,
find associated mountain pass solutions $u^\epsilon$,
and prove uniform Lipschitz bounds on these solutions with
the help of a uniform estimate of Caffarelli, Jerison, and Kenig
\cite{MR1906591} (see Proposition \ref{Proposition 3}).
Along the way, we show that $c^*>0$ (part (a) of the theorem).
In Section 3, we show that a subsequence of $u^\epsilon$ converges
to a function $u$ that solves the Euler-Lagrange equation in the complement
of the free boundary.
%is a generalized solution to the free boundary problem
%in the sense of Definition \ref{def: generalized}.
In Section 4 we show that our putative solution $u$ belongs to the Nehari manifold
$\M$ and minimizes $J$ when restricted to $\M$.  We also show
that $J(u) = c^*$, which ultimately leads to the variational
equation for $u$.

In Section 5 we prove Theorem \ref{thm:nondegeneracy}
by showing that any Lipschitz continuous minimizer of $J$ on $\M$
solving the interior equation \eqref{eq:interior} is nondegenerate.
For minimizers, nondegeneracy is proved using a harmonic replacement.
Our proof of nondegeneracy is somewhat different; it depends on $p>2$ and
projection onto the Nehari manifold.
The second part of the theorem is a corollary of theorems of Lederman and
Wolanski \cite{MR2281453}, which say
that if a singular limit $u$ such as ours is nondegenerate, then
it is a viscosity solution. (Note, however,
that we obtain a stronger form of viscosity solution
because of a further complementary nondegeneracy proved
in Proposition \ref{Theorem 3}.)

In Section 6 we prove Proposition \ref{Theorem 2}, and
in Section 7 we prove Proposition \ref{Theorem 3}.
Both bounds in Proposition \ref{Theorem 3} should be  viewed
as nondegeneracy estimates.
The lower bound by $c$ is an easy consequence of
the nondegeneracy of Definition \ref{def:nondegenerate}.
The upper bound by $1-c$ is a new kind of complementary
nondegeneracy of the region $\{ u\le 1 \}$.
In Section 8, we conclude the proof of
Theorem \ref{thm:corollary} using a blow-up argument based on the
monotonicity formula of G. Weiss described in the appendix, Section 9.

\section{Approximate mountain pass solutions} \label{sec: mountain pass}

%\section{Regularization procedure} \label{Section 3}

We approximate $J$ by $C^1$-functionals as follows. Let $\beta : \R
\to [0,2]$ be a smooth function such that $\beta(t) = 0$ for $t \le
0$, $\beta(t) > 0$ for $0 < t < 1$, $\beta(t) = 0$ for $t \ge 1$, and
$\int_0^1 \beta(s)\, ds = 1$. Then set
\[
\B(t) = \int_0^t \beta(s)\, ds,
\]
and note that $\B : \R \to [0,1]$ is a smooth nondecreasing function
such that $\B(t) = 0$ for $t \le 0$, $\B(t) > 0$ for $0 < t < 1$, and
$\B(t) = 1$ for $t \ge 1$. For $\eps > 0$, let
\[
J_\eps(u) = \int_\Omega \left[\frac{1}{2}\, |\nabla u|^2 + \B\left(\frac{u - 1}{\eps}\right) - \frac{1}{p}\, (u - 1)_+^p\right] dx, \quad u \in H^1_0(\Omega)
\]
and note that $J_\eps$ is of class $C^1$.

If $u$ is a critical point of $J_\eps$, then $u$ is a weak solution of
\begin{equation} \label{2}
\left\{\begin{aligned}
\Delta u & = \frac{1}{\eps}\, \beta\left(\frac{u - 1}{\eps}\right) - (u - 1)_+^{p-1} && \text{in } \Omega\\[10pt]
u & = 0 && \text{on } \bdry{\Omega},
\end{aligned}\right.
\end{equation}
and hence also a classical $C^{2,\alpha}$ solution by elliptic regularity theory.

Note that if $u$ is not identically zero, then it is nontrivial in a stronger
sense, namely, $u>0$ in  $\Omega$ and $\{ u>1 \}$ is a nonempty open set.  In
fact, if $u\le 1$ then it is harmonic in $\Omega$ and hence identically
zero (since $u=0$ on $\partial \Omega$).  Thus any nonzero $u$ is strictly
greater than $1$ on an open set.  Furthermore, on $\{u<1\}$, $u$ is
the harmonic function with boundary values $0$ on $\partial \Omega$ and $1$
on $\partial \{ u\ge 1\}$, hence strictly positive. (Here we are using
the assumption that $\Omega$ is connected.)

Let $\Phi\in C_0^1(\Omega, \R^N)$.  Then by \eqref{2},
\begin{align*}
\mbox{div}\, & \left[ \left( \frac12 |\nabla u|^2
+ \B((u-1)/\eps) - \frac1p (u-1)_+^p\right) \Phi
- (\nabla u \cdot \Phi)\nabla u\right] \\
& = \left( \frac12 |\nabla u|^2
+ \B((u-1)/\eps) - \frac1p (u-1)_+^p\right) \mbox{div}\, \Phi
- \nabla u(D\Phi)\cdot \nabla u \, .
\end{align*}
Hence,
\begin{equation} \label{eq:eps.variation}
\int_\Omega
\left[\left( \frac12 |\nabla u|^2
+ \B((u-1)/\eps) - \frac1p (u-1)_+^p\right) \mbox{div}\, \Phi
- \nabla u(D\Phi)\cdot \nabla u \right] \, dx = 0 \, .
\end{equation}
This is one form of the critical equation that we will ultimately show is
inherited in the limit as $\eps \to 0$ by our mountain pass solution.
It is the critical point equation for $J_\eps$ with
respect to domain variations.  Indeed, for sufficiently small $t$,
$x\mapsto x + t \Phi(x)$ is a diffeomorphism of $\Omega$, and
the left side of \eqref{eq:eps.variation} is
\[
\left. \frac{d}{d t} \right|_{t = 0} J_\eps (u(x+ t \Phi(x))) \, .
\]

\begin{lemma}
$J_\eps$ satisfies the Palais-Smale compactness condition, that is,
every sequence $\seq{u_j} \subset H^1_0(\Omega)$ such that
$J_\eps(u_j)$ is bounded and $J_\eps'(u_j) \to 0$ in $H^1_0(\Omega)$
norm has a convergent subsequence.
\end{lemma}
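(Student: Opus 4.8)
The plan is to carry out the standard two-step verification of the Palais--Smale condition for a $C^1$ functional with a subcritical superlinear nonlinearity, the only mildly nonstandard feature being the bounded ``potential'' terms $\B$ and $\tfrac{1}{\eps}\beta$. Fix $\eps>0$ and let $\seq{u_j}\subset H^1_0(\Omega)$ satisfy $|J_\eps(u_j)|\le M$ and $J_\eps'(u_j)\to 0$, so that $J_\eps'(u_j)\,u_j=\o(1)\,\norm{u_j}$. First I would show $\seq{u_j}$ is bounded in $H^1_0(\Omega)$; this is where $p>2$ enters. Writing
\[
J_\eps(u_j)-\frac{1}{p}\,J_\eps'(u_j)\,u_j=\left(\frac{1}{2}-\frac{1}{p}\right)\norm{u_j}^2+\int_\Omega\left[\B\bigl((u_j-1)/\eps\bigr)-\frac{1}{p\eps}\,\beta\bigl((u_j-1)/\eps\bigr)\,u_j+\frac{1}{p}\,(u_j-1)_+^{p-1}\right]dx,
\]
and using that $0\le \B\le 1$, that $(u_j-1)_+^{p-1}\ge 0$, and that $\beta\bigl((u_j-1)/\eps\bigr)$ vanishes outside $\set{1\le u_j\le 1+\eps}$ and is at most $2$ there, the bracketed integrand is pointwise bounded below by $-2(1+\eps)/(p\eps)$, so its integral is bounded below by a constant $-C(\eps)$ depending only on $\eps$ and $|\Omega|$. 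Since the left side is at most $M+\o(1)\,\norm{u_j}$, this forces $(\tfrac12-\tfrac1p)\norm{u_j}^2\le M+C(\eps)+\o(1)\,\norm{u_j}$, hence $\norm{u_j}\le C$.

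Next, passing to a subsequence, $u_j\rightharpoonup u$ weakly in $H^1_0(\Omega)$, and since the embedding $H^1_0(\Omega)\hookrightarrow L^p(\Omega)$ is compact (this is where the subcriticality of $p$ is used) we may also assume $u_j\to u$ strongly in $L^p(\Omega)$ and a.e. To upgrade to strong convergence in $H^1_0(\Omega)$, I would test the differential at $u_j$ against $u_j-u$ and isolate the Dirichlet term:
\[
\int_\Omega\nabla u_j\cdot\nabla(u_j-u)\,dx=J_\eps'(u_j)(u_j-u)-\frac{1}{\eps}\int_\Omega\beta\bigl((u_j-1)/\eps\bigr)(u_j-u)\,dx+\int_\Omega(u_j-1)_+^{p-1}(u_j-u)\,dx.
\]
The first term is $\o(1)$ because $J_\eps'(u_j)\to 0$ and $\seq{u_j-u}$ is bounded; the second tends to $0$ because $\beta\bigl((u_j-1)/\eps\bigr)/\eps$ is bounded in $L^\infty(\Omega)$ by $2/\eps$ while $u_j-u\to 0$ in $L^1(\Omega)$; and the third tends to $0$ by Hölder's inequality, since $(u_j-1)_+^{p-1}$ is bounded in $L^{p/(p-1)}(\Omega)$ (by the $L^p$ bound on $u_j$) and $u_j-u\to 0$ in $L^p(\Omega)$. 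Combining with $\int_\Omega\nabla u\cdot\nabla(u_j-u)\,dx\to 0$, which holds by weak convergence, gives $\norm{u_j-u}^2\to 0$, completing the proof.

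I do not expect a genuine obstacle: once one notes that for a \emph{fixed} $\eps$ the terms $\B$ and $\tfrac1\eps\beta$ contribute only bounded quantities, the argument is entirely routine. The point to keep in mind is that the constants $C(\eps)$ and $2/\eps$ blow up as $\eps\to 0$, so this lemma is not uniform in $\eps$; the uniform-in-$\eps$ control needed for the limiting argument is supplied separately by the Lipschitz estimates of Section~2.
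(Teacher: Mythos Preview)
Your proof is correct. Both steps differ in detail from the paper's argument, though all the variants involved are standard.

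For boundedness, you use the Ambrosetti--Rabinowitz combination $J_\eps(u_j)-\tfrac{1}{p}J_\eps'(u_j)u_j$ and observe that the remainder integrand is pointwise bounded below since $\beta$ has compact support and $u_j\in[1,1+\eps]$ there. The paper instead decomposes $u_j=u_j^++u_j^-$ with $u_j^+=(u_j-1)_+$, bounds the functional using $0\le\B\le 1$, and tests the derivative separately with $u_j^+$; combining the two gives a quadratic inequality for $\|u_j^\pm\|$. Your route is slightly shorter.

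For the upgrade to strong convergence, you test $J_\eps'(u_j)$ against $u_j-u$ and estimate each of the three resulting terms directly. The paper tests against both $u$ and $u_j$ to deduce $\|u_j\|^2\to\|u\|^2$, then concludes via $\|u_j-u\|^2=\|u_j\|^2+\|u\|^2-2\langle u_j,u\rangle\to 0$. These are equivalent rearrangements of the same idea.

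Your closing remark that the constants degenerate as $\eps\to 0$, so the lemma is not uniform in $\eps$, is apt and matches the paper's strategy: uniform-in-$\eps$ control of the approximate critical points is obtained afterwards by separate arguments (Lemmas~\ref{Lemma 3}--\ref{Lemma 5}).
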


\begin{proof}
We have
\begin{equation} \label{3.1}
J_\eps(u_j) = \int_\Omega \left[\frac{1}{2}\, |\nabla u_j|^2 + \B\left(\frac{u_j - 1}{\eps}\right) - \frac{1}{p}\, (u_j - 1)_+^p\right] dx = \O(1)
\end{equation}
and
\begin{multline} \label{3.2}
J_\eps'(u_j)\, v_j
= \int_\Omega \left[\nabla u_j \cdot \nabla v_j +
\frac{1}{\eps}\, \beta\left(\frac{u_j - 1}{\eps}\right) v_j
- (u_j - 1)_+^{p-1}\, v_j\right] dx = \o(\norm{v_j}),\\[7.5pt]
v_j \in H^1_0(\Omega).
\end{multline}
We begin by showing that $\norm{u_j}$ is bounded.
Write $u_j = u_j^+ + u_j^-$, where $u_j^\pm$ are defined by
\[
u_j^+ := (u_j - 1)_+, \quad u_j^- = 1 - (u_j - 1)_- \, .
\]
Since $\B$ is bounded \eqref{3.1} gives
\[
\int_\Omega \left[|\nabla u_j^+|^2 + |\nabla u_j^-|^2 - \frac{2}{p}\, (u_j^+)^p\right] dx \le C < \infty.
\]
Taking $v_j = u_j^+$ in \eqref{3.2} and using
\[
\int_\Omega |v| \, dx \le C\|v\|
\]
and the fact that $\beta$ is bounded, we have
\[
\int_\Omega (u_j^+)^p \, dx \le \int_\Omega |\nabla u_j^+|^2 \, dx +
 C \|u_j^+\|.
\]
Combining our inequalities gives
\[
\left(1 - \frac{2}{p}\right) \|u_j^+\|^2 + \|u_j^-\|^2 \le C \left(\|u_j^+\| + 1\right),
\]
which implies that $\|u_j^\pm\|$ are bounded, and hence $\norm{u_j}$, is bounded.

Replace $u_j$ by a subsequence (still denoted $u_j$)
that tends weakly to $u$ in $H_0^1(\Omega)$
and such that $u_j$ tends to $u$ in $L^p(\Omega)$ norm and pointwise
almost everywhere.  Then $J_\eps'(u_j)u\to 0$ and $J_\eps'(u_j)u_j \to 0$
imply that
\[
\lim_{j\to \infty} \|u_j\|^2 = \|u\|^2.
\]
Finally,
\[
\limsup_{j\to \infty} \|u_j-u\|^2
= \limsup_{j\to \infty} (\|u_j\|^2 + \|u\|^2- 2\langle u_j, u \rangle) =
2\|u\|^2 - 2\langle u, u \rangle = 0
\]
\end{proof}

Since $p < 2N/(N - 2)$, the Sobolev imbedding theorem implies
\[
J_\eps(u) \ge \int_\Omega \left[\frac{1}{2}\, |\nabla u|^2 - \frac{1}{p}\, |u|^p\right] dx \ge \frac{1}{2} \norm{u}^2 - C \norm{u}^p \quad \forall u \in H^1_0(\Omega)
\]
for some constant $C$ depending on $\Omega$.
Since $p > 2$, then there exists a constant $\rho > 0$ such that
\[
\norm{u} \le \rho \implies J_\eps(u) \ge \frac{1}{3} \norm{u}^2.
\]
Moreover,
\[
J_\eps(u) \le \int_\Omega \left[\frac{1}{2}\, |\nabla u|^2 + 1 - \frac{1}{p}\, (u - 1)_+^p\right] dx
\]
and hence, again because $p>2$, there exists
a function $u_0 \in H^1_0(\Omega)$ such that $J_\eps(u_0) < 0 = J_\eps(0)$.
Therefore, the class of paths
\[
\Gamma_\eps = \set{\gamma \in C([0,1],H^1_0(\Omega)) : \gamma(0) = 0,\, J_\eps(\gamma(1)) < 0}
\]
is nonempty and
\begin{equation} \label{22}
c_\eps := \inf_{\gamma \in \Gamma_\eps}\, \max_{u \in \gamma([0,1])}\, J_\eps(u) \ge \frac{\rho^2}{3}.
\end{equation}

\begin{lemma} \label{Lemma 7}
$J_\eps$ has a (nontrivial) critical point $u^\eps$ at the level $c_\eps$.
\end{lemma}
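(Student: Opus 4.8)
The plan is to apply the Mountain Pass Theorem of Ambrosetti--Rabinowitz to the $C^1$-functional $J_\eps$, using the ingredients that have already been assembled. We have shown that $J_\eps$ satisfies the Palais--Smale condition (Lemma above), that $J_\eps(0) = 0$ and that $J_\eps(u) \ge \tfrac13 \norm{u}^2$ on the sphere $\norm{u} = \rho$ for a suitable $\rho > 0$, and that there is some $u_0$ with $J_\eps(u_0) < 0$. After enlarging $u_0$ (replacing it by $t u_0$ for $t$ large if necessary) we may assume $\norm{u_0} > \rho$, so that the geometric separation hypothesis of the mountain pass theorem holds: $\max\set{J_\eps(0), J_\eps(u_0)} = 0 < \tfrac13\rho^2 \le \inf_{\norm{u} = \rho} J_\eps(u)$. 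The class $\Gamma_\eps$ is nonempty since any path from $0$ to $u_0$ (e.g. the segment $t \mapsto t u_0$, after possibly reparametrizing to land at a point where $J_\eps < 0$) lies in it, and every path in $\Gamma_\eps$ must cross the sphere $\norm{u} = \rho$ by continuity, giving the lower bound $c_\eps \ge \rho^2/3 > 0$ recorded in \eqref{22}.

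The Mountain Pass Theorem then produces a Palais--Smale sequence $\seq{u_j}$ at the level $c_\eps$, i.e. $J_\eps(u_j) \to c_\eps$ and $J_\eps'(u_j) \to 0$ in $H^1_0(\Omega)^*$; the Palais--Smale condition yields a convergent subsequence $u_j \to u^\eps$, and by continuity of $J_\eps$ and $J_\eps'$ we get $J_\eps(u^\eps) = c_\eps$ and $J_\eps'(u^\eps) = 0$. Since $c_\eps \ge \rho^2/3 > 0 = J_\eps(0)$, the critical point $u^\eps$ is not identically zero; by the remarks following \eqref{2}, it is then nontrivial in the strong sense ($u^\eps > 0$ in $\Omega$ and $\set{u^\eps > 1}$ a nonempty open set), and by elliptic regularity it is a classical $C^{2,\alpha}$ solution of \eqref{2}.

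I do not expect a genuine obstacle here: this is the one step in the construction where the approximated functional $J_\eps$ is smooth, so the standard variational machinery applies verbatim. The only point requiring a word of care is the geometric setup --- one must check that $u_0$ can be taken with $\norm{u_0} > \rho$, which follows because $J_\eps(t u_0) \to -\infty$ as $t \to \infty$ (the superlinear term $-\tfrac1p (u-1)_+^p$ with $p > 2$ dominates), so some large multiple of $u_0$ does the job and we rename it $u_0$. Everything else is a direct citation of the mountain pass theorem together with the two lemmas and estimates already proved in this section.
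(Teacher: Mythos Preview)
Your proposal is correct and amounts to the same argument as the paper's, just packaged differently: you cite the Ambrosetti--Rabinowitz mountain pass theorem, while the paper unpacks its proof by invoking the first deformation lemma directly (if no critical point exists at level $c_\eps$, deform a near-optimal path below $c_\eps$, contradicting the definition of the minimax value). Two minor remarks: the enlargement of $u_0$ is unnecessary, since $J_\eps(u_0)<0$ together with the estimate $J_\eps(u)\ge\tfrac13\norm{u}^2$ on $\norm{u}\le\rho$ already forces $\norm{u_0}>\rho$; and the paper's class $\Gamma_\eps$ allows the endpoint to vary over all of $\set{J_\eps<0}$ rather than being fixed at $u_0$, but this does not affect the argument or the critical level.
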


\begin{proof}
If not, then there exists a constant $0 < \delta \le c_\eps/2$ and a
continuous map $\eta : \set{J_\eps \le c_\eps + \delta} \to
\set{J_\eps \le c_\eps - \delta}$ such that $\eta$ is the identity on
$\set{J_\eps \le 0}$ by the first deformation lemma (see, e.g., Perera
and Schechter \cite[Lemma 1.3.3]{MR3012848}). By the definition of
$c_\eps$, there exists a path $\gamma \in \Gamma_\eps$ such that
$\max_{\gamma([0,1])}\, J_\eps \le c_\eps + \delta$. Then
$\widetilde{\gamma} := \eta \circ \gamma \in \Gamma_\eps$ and
$\max_{\widetilde{\gamma}([0,1])}\, J_\eps \le c_\eps - \delta$,
contradicting the definition of $c_\eps$.
\end{proof}

\begin{lemma} \label{Lemma 1}
We have $c_\eps \le c^*$. In particular, by \eqref{22}, $c^*>0$ and Theorem \ref{Theorem 1}
(a) holds.
\end{lemma}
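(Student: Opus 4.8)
The plan is to prove $c_\eps \le c^*$ by comparing the two functionals pointwise and observing that every path admissible for $J$ is admissible for $J_\eps$; the ``in particular'' clause then follows by combining this with the uniform lower bound \eqref{22}.

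The only real content is the elementary pointwise inequality
\[
\B\!\left(\frac{v-1}{\eps}\right) \le \goodchi_{\set{v > 1}}(x), \qquad x \in \Omega,
\]
valid for every $v \in H^1_0(\Omega)$ and every $\eps > 0$: on $\set{v \le 1}$ both sides vanish since $\B \equiv 0$ on $(-\infty,0]$, and on $\set{v > 1}$ the right-hand side equals $1$ while $\B \le 1$ everywhere. Since the $|\nabla v|^2$ and $(v-1)_+^p$ terms of $J$ and $J_\eps$ are identical, this gives $J_\eps(v) \le J(v)$ for all $v \in H^1_0(\Omega)$.

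From here the argument is purely formal. First, $\Gamma \subseteq \Gamma_\eps$: if $\gamma \in \Gamma$ then $\gamma(0) = 0$ and $J_\eps(\gamma(1)) \le J(\gamma(1)) < 0$. (That $\Gamma \ne \emptyset$, so that $c^*$ is a genuine real number, follows exactly as in the construction of $u_0$ above: for a fixed $\varphi \in C_0^\infty(\Omega)$ with $\varphi \ge 0$, $\varphi \not\equiv 0$, one has $J(t\varphi) \to -\infty$ as $t \to \infty$ because $p > 2$, while the $\goodchi$-term stays bounded by $\vol{\Omega}$.) Second, along any $\gamma$ we have $\max_{u \in \gamma([0,1])} J_\eps(u) \le \max_{u \in \gamma([0,1])} J(u)$. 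Hence
\[
c_\eps = \inf_{\gamma \in \Gamma_\eps} \max_{u \in \gamma([0,1])} J_\eps(u) \le \inf_{\gamma \in \Gamma} \max_{u \in \gamma([0,1])} J_\eps(u) \le \inf_{\gamma \in \Gamma} \max_{u \in \gamma([0,1])} J(u) = c^*.
\]

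I anticipate no genuine obstacle here; the whole lemma rests on the bound $\B((v-1)/\eps) \le \goodchi_{\set{v>1}}$, which is immediate from the construction of $\B$. The one point worth a second look is that the $\rho$ appearing in \eqref{22} is independent of $\eps$ — which it is, since the Sobolev constant $C$ in $J_\eps(u) \ge \frac{1}{2}\norm{u}^2 - C\norm{u}^p$ does not depend on $\eps$ — so that $c_\eps \ge \rho^2/3$ with $\rho$ fixed, and therefore $c^* \ge \rho^2/3 > 0$, which is Theorem \ref{Theorem 1}(a).
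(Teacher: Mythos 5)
Your proof is correct and follows exactly the same route as the paper: the pointwise bound $\B((t-1)/\eps) \le \goodchi_{\set{t>1}}$ gives $J_\eps \le J$, hence $\Gamma \subseteq \Gamma_\eps$, hence $c_\eps \le c^*$, and combining with \eqref{22} (whose $\rho$ is indeed $\eps$-independent) yields $c^* > 0$. The extra details you supply—checking $\B$ vanishes on $(-\infty,0]$, noting $\Gamma \ne \emptyset$, flagging the $\eps$-independence of $\rho$—are all sound but implicit in the paper's terser version.
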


\begin{proof}
Since $\B((t - 1)/\eps) \le \goodchi_{\set{t > 1}}$ for all $t$, $J_\eps(u) \le J(u)$ for all $u \in H^1_0(\Omega)$. So $\Gamma \subset \Gamma_\eps$ and
\[
c_\eps \le \max_{u \in \gamma([0,1])}\, J_\eps(u) \le \max_{u \in \gamma([0,1])}\, J(u) \quad \forall \gamma \in \Gamma. \QED
\]
\end{proof}

For $0 < \eps \le 1$, $u^\eps$ have the following uniform regularity properties.

\begin{lemma} \label{Lemma 3}
There exists a constant $C > 0$ such that, for $0 < \eps \le 1$, $\norm{u^\eps} \le C$.
\end{lemma}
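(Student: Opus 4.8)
The plan is to bound $\norm{u^\eps}$ using only two ingredients: that the energy level is bounded above independently of $\eps$, namely $J_\eps(u^\eps) \le c^*$, and that $u^\eps$ solves the critical point equation for $J_\eps$. The first ingredient is immediate from what is already proved: by Lemma \ref{Lemma 7} we have $J_\eps(u^\eps) = c_\eps$, and by Lemma \ref{Lemma 1} we have $c_\eps \le c^*$, with $c^*$ independent of $\eps$. This plays the role of the ``bounded energy'' hypothesis in the Palais--Smale lemma, and in fact the whole argument is a variant of the boundedness step there, so I do not expect a serious obstacle.

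First I would test the weak form of \eqref{2} against $v := (u^\eps - 1)_+$, which lies in $H^1_0(\Omega)$ because $\set{u^\eps > 1} \subset\subset \Omega$. Since $\nabla v = \nabla u^\eps$ on $\set{u^\eps > 1}$ and $\nabla v = 0$ off this set, and $\beta((u^\eps - 1)/\eps)$ vanishes outside $\set{1 < u^\eps < 1 + \eps}$, this yields the identity
\[
\int_\Omega (u^\eps - 1)_+^p\, dx = \int_{\set{u^\eps > 1}} |\nabla u^\eps|^2\, dx + \int_\Omega \frac{1}{\eps}\, \beta\!\left(\frac{u^\eps - 1}{\eps}\right) (u^\eps - 1)_+\, dx .
\]
The key point is that the last integral is nonnegative and bounded by $2\,\abs{\Omega}$ uniformly in $\eps$: on the support of $\beta((u^\eps - 1)/\eps)$ one has $0 < (u^\eps - 1)_+ < \eps$ while $\beta \le 2$, so the integrand is at most $2$ and is supported on a set of measure at most $\abs{\Omega}$. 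This is exactly the mechanism used to control the penalization term in the proof of the Palais--Smale lemma above, and the singular prefactor $1/\eps$ is the only thing that requires any care.

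Finally I would substitute this identity into $J_\eps(u^\eps) \le c^*$, that is, into
\[
\frac{1}{2}\, \norm{u^\eps}^2 + \int_\Omega \B\!\left(\frac{u^\eps - 1}{\eps}\right) dx - \frac{1}{p} \int_\Omega (u^\eps - 1)_+^p\, dx \le c^* ,
\]
and use $\B \ge 0$ together with $\int_{\set{u^\eps > 1}} |\nabla u^\eps|^2\, dx \le \norm{u^\eps}^2$ to obtain
\[
\left(\frac{1}{2} - \frac{1}{p}\right) \norm{u^\eps}^2 \le c^* + \frac{2\,\abs{\Omega}}{p} .
\]
Since $p > 2$, the coefficient on the left is a positive constant, so $\norm{u^\eps}^2$ is bounded by a constant depending only on $\Omega$ and $p$, which is the claim. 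If one prefers, one can run the same computation after the decomposition $u^\eps = (u^\eps)^+ + (u^\eps)^-$ with $(u^\eps)^\pm$ as in the Palais--Smale lemma, using $\norm{u^\eps}^2 = \norm{(u^\eps)^+}^2 + \norm{(u^\eps)^-}^2$; this separates the bounds on the two pieces but gives the same conclusion. The only delicate point, and it is a minor one, is the uniform estimate of the penalization term despite its singular $1/\eps$ prefactor.
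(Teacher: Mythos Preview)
Your proof is correct and follows the same overall strategy as the paper: combine the level bound $J_\eps(u^\eps)\le c^*$ with an identity obtained by testing \eqref{2} against a truncation of $u^\eps-1$. The only difference is the choice of test function. The paper tests with $(u^\eps-1-\eps)_+$, which is supported where $\beta((u^\eps-1)/\eps)$ already vanishes, so the penalization term drops out entirely; this leads to the identity \eqref{33} and then a slightly longer balancing argument with an auxiliary parameter $\lambda>2/(p-2)$. You instead test with $(u^\eps-1)_+$ and keep the $\beta$ term, observing directly that $\tfrac{1}{\eps}\beta((u^\eps-1)/\eps)(u^\eps-1)_+\le 2$ pointwise, so the term is harmless. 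Your route avoids the extra parameter and reaches $(\tfrac12-\tfrac1p)\norm{u^\eps}^2\le c^*+2\vol{\Omega}/p$ in one step, which is a bit cleaner. One small remark: the fact that $(u^\eps-1)_+\in H^1_0(\Omega)$ does not need Lemma \ref{Lemma 4} (which comes later); it follows immediately because $u^\eps$ is a classical solution, hence continuous on $\closure{\Omega}$ with $u^\eps=0$ on $\bdry\Omega$, so $\set{u^\eps\ge1}$ is compact in $\Omega$.
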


\begin{proof}
By Lemma \ref{Lemma 1},
\[
\int_\Omega \left[\frac{1}{2}\, |\nabla u^\eps|^2 + \B\left(\frac{u^\eps - 1}{\eps}\right) - \frac{1}{p}\, (u^\eps - 1)_+^p\right] dx \le c^*
\]
and hence
\begin{equation} \label{32}
\frac{1}{2} \int_\Omega |\nabla u^\eps|^2\, dx \le c^* + \frac{1}{p} \int_{\set{v_\eps > 0}} v_\eps^p\, dx,
\end{equation}
where $v_\eps = u^\eps - 1$. Testing \eqref{2} with $(u^\eps - 1 - \eps)_+$ gives
\begin{equation} \label{33}
\int_{\set{u^\eps > 1 + \eps}} |\nabla u^\eps|^2\, dx = \int_{\set{v_\eps > \eps}} v_\eps^{p-1}\, (v_\eps - \eps)\, dx.
\end{equation}
Fix $\lambda > 2/(p - 2)$. Multiplying \eqref{33} by $(\lambda + 1)/p \lambda$ and subtracting from \eqref{32} gives
\begin{multline} \label{34}
\frac{1}{2} \int_{\set{u^\eps \le 1 + \eps}} |\nabla u^\eps|^2\, dx + \frac{(p - 2) \lambda - 2}{2p \lambda} \int_{\set{u^\eps > 1 + \eps}} |\nabla u^\eps|^2\, dx\\[10pt]
\le c^* + \frac{1}{p} \int_{\set{0 < v_\eps \le \eps}} v_\eps^p\, dx + \frac{1}{p \lambda} \int_{\set{v_\eps > \eps}} v_\eps^{p-1}\, \big[(\lambda + 1)\, \eps - v_\eps\big]\, dx.
\end{multline}
The last integral is less than or equal to $\int_{\set{\eps < v_\eps < (\lambda + 1)\, \eps}} v_\eps^{p-1}\, \big[(\lambda + 1)\, \eps - v_\eps\big]\, dx$ and hence \eqref{34} gives
\[
\min \set{\frac{1}{2},\frac{(p - 2) \lambda - 2}{2p \lambda}} \int_\Omega |\nabla u^\eps|^2\, dx \le c^* + \frac{\eps^p\, \vol{\Omega}}{p} \left[1 + (\lambda + 1)^{p-1}\right].
\]
The conclusion follows.
\end{proof}

\begin{lemma} \label{Lemma 2}
There exists a constant $C > 0$ such that, for $0 < \eps \le 1$,
\[
\max_{x \in \Omega}\, u^\eps(x) \le C.
\]
\end{lemma}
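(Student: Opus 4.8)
The plan is to upgrade the uniform $H^1_0$ bound of Lemma~\ref{Lemma 3} to a uniform sup bound by a Moser iteration (equivalently, a Brezis--Kato bootstrap), exploiting that $p$ is strictly subcritical. Since $\beta \ge 0$, equation \eqref{2} gives $-\Delta u^\eps \le (u^\eps - 1)_+^{p-1}$ in $\Omega$, and since $u^\eps > 0$ (being the nontrivial critical point at level $c_\eps > 0$, as noted after \eqref{2}), we have $(u^\eps - 1)_+^{p-1} \le (u^\eps)^{p-1} = V_\eps\, u^\eps$ where $V_\eps := (u^\eps)^{p-2} \ge 0$. Thus each $u^\eps$ is a nonnegative $H^1_0(\Omega)$-subsolution of $-\Delta w \le V_\eps\, w$; because $u^\eps = 0$ on $\bdry\Omega$, the test functions $(u^\eps)^{2\beta - 1}$ needed below lie in $H^1_0(\Omega)$, so no boundary cutoff is required, and because each $u^\eps$ is already a bounded $C^{2,\alpha}$ function (elliptic regularity applied to \eqref{2}) these test functions are admissible with no further truncation.

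The key structural point is that $V_\eps$ is bounded, uniformly in $\eps$, in some $L^s(\Omega)$ with $s > N/2$. Setting $2^* := 2N/(N-2)$ for $N \ge 3$, the hypothesis $p < 2^*$ is exactly equivalent to $p - 2 < 4/(N-2)$, i.e. to $s := 2^*/(p-2) > N/2$; and by Lemma~\ref{Lemma 3} together with the Sobolev embedding $H^1_0(\Omega) \hookrightarrow L^{2^*}(\Omega)$,
\[
\|V_\eps\|_{L^s(\Omega)} = \|u^\eps\|_{L^{2^*}(\Omega)}^{\,p-2} \le C\,\norm{u^\eps}^{p-2} \le C'
\]
for all $\eps \in (0,1]$. (When $N = 2$ one uses instead $H^1_0(\Omega)\hookrightarrow L^q(\Omega)$ for every finite $q$, putting $V_\eps$ in any $L^s$, $s>1$, with uniform bound; the rest of the argument is identical and easier.) Now run the iteration: for $\beta \ge 1$, testing $-\Delta u^\eps \le V_\eps u^\eps$ against $(u^\eps)^{2\beta-1}$, bounding the left side below by $\frac{2\beta-1}{\beta^2}C_S\,\|u^\eps\|_{L^{\beta 2^*}}^{2\beta}$ via Sobolev, and the right side above by $\|V_\eps\|_{L^s}\,\|u^\eps\|_{L^{2\beta s'}}^{2\beta}$ via Hölder with the conjugate exponent $s' < N/(N-2) = 2^*/2$, yields
\[
\|u^\eps\|_{L^{\chi q}(\Omega)} \le \bigl(C q\,\|V_\eps\|_{L^s(\Omega)}\bigr)^{s'/q}\,\|u^\eps\|_{L^q(\Omega)}, \qquad \chi := \tfrac{2^*}{2s'} > 1, \quad q = 2\beta s'.
\]
Starting from $q_0 = 2^*$ and iterating $q_{k+1} = \chi q_k$, the telescoped product has logarithm $\sum_k (s'/q_k)\log(Cq_k\|V_\eps\|_{L^s})$, which converges since $\sum \chi^{-k}$ and $\sum k\chi^{-k}$ do; hence $\|u^\eps\|_{L^\infty(\Omega)} \le C\bigl(\Omega,\|V_\eps\|_{L^s(\Omega)}\bigr) \le C$, uniformly in $\eps$. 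Alternatively one may invoke the classical Brezis--Kato theorem (to get $u^\eps \in L^q$ for all $q$ with bounds depending only on $\|V_\eps\|_{L^{N/2}}$ and $\norm{u^\eps}$) followed by one step of $L^q$-elliptic regularity.

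The only thing that needs watching is uniformity, and it is essentially automatic: the Moser/Brezis--Kato constants depend only on $N$, on $\Omega$, and on the $L^s$-norm of the zeroth-order coefficient $V_\eps$, and that norm was just bounded independently of $\eps$ through Lemma~\ref{Lemma 3}. The one genuinely used hypothesis is the strict subcriticality $p < 2N/(N-2)$, which is precisely what forces $(u^\eps)^{p-2}$ into a Lebesgue space strictly better than $L^{N/2}$ and hence makes the iteration gain at each step.
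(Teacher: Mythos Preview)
Your proof is correct and follows essentially the same approach as the paper: use $\beta\ge 0$ to get the subsolution inequality $-\Delta u^\eps \le (u^\eps)^{p-1}$, then bootstrap from the uniform $H^1_0$ bound of Lemma~\ref{Lemma 3} to a uniform $L^\infty$ bound via Moser iteration, with the strict subcriticality $p<2N/(N-2)$ providing the gain at each step. The paper packages the iteration by citing an $L^q\to L^\infty$ estimate of Bonforte--Grillo--Vazquez (their Theorem~3.1, giving $\|u^\eps\|_\infty \le C\|u^\eps\|_q^{2q/(2q-N(p-2))}$ for $q>N(p-2)/2$) rather than writing it out, but the content is the same.
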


\begin{proof}
Since $p < 2N/(N - 2)$, we have $N(p - 2)/2 < 2N/(N - 2)$. Fix $N(p - 2)/2 < q < 2N/(N - 2)$. Since
\[
- \Delta u^\eps = (u^\eps - 1)_+^{p-1} - \frac{1}{\eps}\, \beta\left(\frac{u^\eps - 1}{\eps}\right) \le (u^\eps-1)_+^{p-1} \le  (u^\eps)^{p-1},
\]
there exists a constant $C > 0$ such that
\[
\norm[\infty]{u^\eps} \le C \norm[q]{u^\eps}^{2q/(2q - N(p-2))}
\]
by Bonforte et al. \cite[Theorem 3.1]{BonforteGrilloVazquez}. Since $u^\eps$ is bounded in $L^q(\Omega)$ by the Sobolev imbedding theorem and Lemma \ref{Lemma 3}, the conclusion follows.
\end{proof}

By Lemma \ref{Lemma 2}, $(u^\eps - 1)_+^{p-1} \le A_0$ for some constant $A_0 > 0$
independent of $\eps$.   Let $\varphi_0 > 0$ be the solution of
\[
\left\{\begin{aligned}
- \Delta \varphi_0 & = A_0 && \text{in } \Omega\\[10pt]
\varphi_0 & = 0 && \text{on } \bdry{\Omega}.
\end{aligned}\right.
\]

\begin{lemma} \label{Lemma 4}
For $0 < \eps \le 1$,
\[
u^\eps(x) \le \varphi_0(x) \quad \forall x \in \Omega,
\]
in particular, $\set{u^\eps \ge 1} \subset \set{\varphi_0 \ge 1} \strictsubset \Omega$.
\end{lemma}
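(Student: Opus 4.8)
\textbf{Proof proposal for Lemma \ref{Lemma 4}.}
The plan is a straightforward comparison argument based on the maximum principle. First I would record that, by Lemma \ref{Lemma 2}, $(u^\eps-1)_+^{p-1}\le A_0$ for all $0<\eps\le 1$, while the term $\tfrac1\eps\,\beta((u^\eps-1)/\eps)$ appearing in \eqref{2} is nonnegative because $\beta\ge 0$. Hence the classical ($C^{2,\alpha}$) solution $u^\eps$ of \eqref{2} satisfies
\[
-\Delta u^\eps=(u^\eps-1)_+^{p-1}-\frac1\eps\,\beta\!\left(\frac{u^\eps-1}{\eps}\right)\le A_0=-\Delta\varphi_0 \quad\text{in }\Omega,
\]
so that $w:=u^\eps-\varphi_0$ is subharmonic in $\Omega$. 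Since both $u^\eps$ and $\varphi_0$ vanish on $\bdry\Omega$, we have $w=0$ on $\bdry\Omega$, and $w$ is continuous on $\bar\Omega$ (again by elliptic regularity). The weak maximum principle then gives $w\le 0$ in $\Omega$, i.e.\ $u^\eps\le\varphi_0$ throughout $\Omega$, uniformly in $\eps\in(0,1]$.

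For the last assertion, I would observe that $\varphi_0\in C(\bar\Omega)$ with $\varphi_0=0$ on $\bdry\Omega$; therefore the closed set $\set{\varphi_0\ge 1}$ is disjoint from $\bdry\Omega$ and, being closed and bounded, is a compact subset of $\Omega$, i.e.\ $\set{\varphi_0\ge 1}\strictsubset\Omega$. Combined with the pointwise bound $u^\eps\le\varphi_0$, this yields $\set{u^\eps\ge 1}\subset\set{\varphi_0\ge 1}\strictsubset\Omega$.

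There is no serious obstacle here: the only points requiring care are that $u^\eps$ is genuinely a classical (or at least $H^1_0$ weak) solution so that the comparison principle applies — which was already noted after \eqref{2} — and that the sign of the regularizing term $\tfrac1\eps\beta(\cdot)$ is exploited in the correct direction. The uniformity in $\eps$ is automatic since $A_0$ and hence $\varphi_0$ do not depend on $\eps$.
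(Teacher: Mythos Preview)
Your proof is correct and follows exactly the same approach as the paper: use $\beta\ge 0$ to obtain $-\Delta u^\eps\le A_0=-\Delta\varphi_0$, then apply the maximum principle with the shared boundary data. Your write-up simply adds a few more details (explicitly forming $w=u^\eps-\varphi_0$ and spelling out why $\set{\varphi_0\ge 1}\strictsubset\Omega$) that the paper leaves implicit.
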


\begin{proof}
Since $\beta(t) \ge 0$ for all $t$,
\[
- \Delta u^\eps \le (u^\eps - 1)_+^{p-1} \le A_0 = - \Delta \varphi_0,
\]
so $u^\eps \le \varphi_0$ by the maximum principle.
\end{proof}

\begin{lemma} \label{Lemma 5} There exists a constant $C > 0$ such
that, for $r>0$ and $0 < \eps \le 1$, if $B_r(x_0) \subset\Omega$, then
\[
\max_{x \in B_{r/2}(x_0)}\, |\nabla u^\eps(x)| \le C/r.
\]
\end{lemma}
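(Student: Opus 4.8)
The plan is to read off the bound from a uniform‑in‑$\eps$ interior gradient estimate for the regularized equation \eqref{2}, after isolating the only dangerous term. First I would collect the bounds already in hand: by Lemma \ref{Lemma 4}, $0\le u^\eps\le\varphi_0\le M:=\max_{\bar\Omega}\varphi_0$ on $\bar\Omega$ with $M$ independent of $\eps$, and $0\le(u^\eps-1)_+^{p-1}\le A_0$ with $A_0$ independent of $\eps$. Thus in \eqref{2} the ``regular'' part $(u^\eps-1)_+^{p-1}$ of the right‑hand side is uniformly bounded, while the penalization $\frac1\eps\,\beta\!\left(\frac{u^\eps-1}{\eps}\right)$ is nonnegative, of size $O(1/\eps)$, and supported in the thin layer $\set{1\le u^\eps\le 1+\eps}$. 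Fixing $B_r(x_0)\subset\Omega$, I would then split $u^\eps=h+w_1-w_2$ on $B_r(x_0)$, where $h$ is harmonic with $h=u^\eps$ on $\partial B_r(x_0)$, $-\Delta w_1=(u^\eps-1)_+^{p-1}$ with $w_1=0$ on $\partial B_r(x_0)$, and $-\Delta w_2=\frac1\eps\,\beta\!\left(\frac{u^\eps-1}{\eps}\right)$ with $w_2=0$ on $\partial B_r(x_0)$.

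The first two pieces are routine. By the maximum principle $0\le h\le M$, so the interior gradient estimate for harmonic functions gives $|\nabla h|\le CM/r$ on $B_{r/2}(x_0)$. Since $0\le-\Delta w_1\le A_0$ and $w_1$ vanishes on $\partial B_r(x_0)$, standard estimates for Poisson's equation give $|\nabla w_1|\le CA_0 r\le C/r$ because $r\le\diam\Omega$. Moreover $w_2=h+w_1-u^\eps\le h+w_1$ (as $u^\eps\ge 0$) and $w_2\ge 0$ (as $-\Delta w_2\ge 0$), so $0\le w_2\le M+CA_0 r^2$ on $B_r(x_0)$, again uniformly in $\eps$.

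It remains to bound $|\nabla w_2|$ on $B_{r/2}(x_0)$ by $C/r$, and this is the heart of the matter. Away from the transition layer $w_2$ is harmonic, so at any $x^*\in B_{r/2}(x_0)$ with $\dist{x^*}{\set{u^\eps\le 1+\eps}}\ge r/4$ the boundedness of $w_2$ already gives $|\nabla w_2(x^*)|\le C/r$. The obstruction is at points near the layer: there $w_2$ is the Green potential of the nonnegative measure $\frac1\eps\,\beta\!\left(\frac{u^\eps-1}{\eps}\right)dx$, whose density is only controlled by $2/\eps$, and a bare potential‑theoretic estimate for $\int\nabla_x G(x^*,y)\,d\mu_\eps(y)$ produces a bound that degenerates (logarithmically) as $\eps\to 0$; the genuine cancellation comes from the fact that this measure is concentrated near the $(N-1)$‑dimensional level set $\set{u^\eps=1}$ and that $u^\eps$ crosses the layer at a uniformly bounded rate. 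The natural way to capture this is the uniform estimate of Caffarelli, Jerison and Kenig obtained from their monotonicity formula (\cite{MR1906591}, Proposition \ref{Proposition 3}), applied at scale $r$ by rescaling $y\mapsto u^\eps(x_0+ry)$ and using $|u^\eps|\le M$ and $r\le\diam\Omega$ to absorb the scaling constants. I expect this last step --- the uniform control of the penalization term, equivalently of $\nabla u^\eps$ across $\set{1\le u^\eps\le 1+\eps}$ --- to be the only nonelementary ingredient, and hence the main obstacle; everything else is interior elliptic regularity together with Lemma \ref{Lemma 4}.
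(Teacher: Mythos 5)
You have correctly identified that Proposition \ref{Proposition 3} (the Caffarelli–Jerison–Kenig estimate) is the essential nonelementary ingredient, and your discussion of why a bare potential estimate for the penalization measure degenerates as $\eps\to 0$ is exactly the right intuition for why that result is needed. In that sense the proposal reaches the same core idea as the paper.

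However, the decomposition $u^\eps = h + w_1 - w_2$ is unnecessary scaffolding, and it actually short-circuits itself. In your last step you propose to control $\nabla w_2$ by rescaling $y\mapsto u^\eps(x_0+ry)$ and invoking Proposition \ref{Proposition 3} for $u^\eps$ --- but Proposition \ref{Proposition 3} is stated for $u^\eps$ itself (its hypothesis involves $\chi_{\{|u^\eps-1|<\eps\}}$, a level set of $u^\eps$, not of $w_2$), and its conclusion already bounds $|\nabla u^\eps|$ on the half ball. So the moment you can legitimately apply that proposition, Lemma \ref{Lemma 5} is proved and the bounds on $h$, $w_1$, $w_2$ are moot. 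The paper simply verifies the two-sided distributional inequality directly: since $0\le\beta\le 2$ and $\beta((u^\eps-1)/\eps)$ vanishes unless $1<u^\eps<1+\eps$, one has
\[
\Delta u^\eps \le \frac{1}{\eps}\,\beta\!\left(\frac{u^\eps-1}{\eps}\right)\le \frac{2}{\eps}\,\goodchi_{\set{|u^\eps-1|<\eps}},
\qquad
-\Delta u^\eps \le (u^\eps-1)_+^{p-1}\le A_0,
\]
hence $\pm\Delta u^\eps \le \max\{2,A_0\}\bigl(\tfrac{1}{\eps}\goodchi_{\set{|u^\eps-1|<\eps}}+1\bigr)$. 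Then a rescale of $B_r(x_0)$ to $B_1(0)$, together with the uniform $L^2$ bound from Lemma \ref{Lemma 2}, puts $u^\eps$ squarely in the hypotheses of Proposition \ref{Proposition 3}. Your proposal has all the facts needed for this one-line verification (you quote $\beta\ge 0$, $(u^\eps-1)_+^{p-1}\le A_0$, and the support of the penalization); you should simply apply them to $u^\eps$ directly rather than splitting off a Green potential $w_2$ that the proposition was never designed to handle in isolation.
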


\begin{proof}
Since $\beta(t) \le 2$ for all $t$,
\[
\Delta u^\eps \le \frac{1}{\eps}\, \beta\left(\frac{u^\eps - 1}{\eps}\right) \le \frac{2}{\eps}\, \goodchi_{\set{|u^\eps - 1| < \eps}}(x),
\]
and since $\beta(t) \ge 0$ for all $t$,
\[
- \Delta u^\eps \le (u^\eps - 1)_+^{p-1} \le A_0,
\]
so
\[
\pm \Delta u^\eps \le \max \set{2,A_0} \left(\frac{1}{\eps}\, \goodchi_{\set{|u^\eps - 1| < \eps}}(x) + 1\right).
\]
Since $u^\eps$ is also uniformly bounded in $L^2(\Omega)$ by
Lemma \ref{Lemma 2}, the conclusion follows from the following
result of Caffarelli, Jerison, and Kenig \cite{MR1906591}.
\end{proof}

\begin{proposition}[{\cite[Theorem 5.1]{MR1906591}}] \label{Proposition 3}
Suppose that $u$ is a Lipschitz continuous function on $B_1(0) \subset \R^N$ satisfying the distributional inequalities
\[
\pm \Delta u \le A \left(\frac{1}{\eps}\, \goodchi_{\set{|u - 1| < \eps}}(x) + 1\right)
\]
for some constants $A > 0$ and $0 < \eps \le 1$. Then there exists a constant $C > 0$, depending on $N$, $A$, and $\int_{B_1(0)} u^2\, dx$, but not on $\eps$, such that
\[
\max_{x \in B_{1/2}(0)}\, |\nabla u(x)| \le C.
\]
\end{proposition}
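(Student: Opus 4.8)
The plan is to reduce the gradient bound, by interior elliptic estimates, to an oscillation estimate at points where $u$ is close to $1$, and then prove that estimate by a compactness argument. The first step is to bound $\|u\|_{L^\infty(B_{3/4})}$ in terms of $\int_{B_1}u^2$, $N$ and $A$. On the open set $\{u>1+\eps\}$ the hypothesis reduces to $|\Delta u|\le A$, and across $\partial\{u>1+\eps\}$ the truncation $(u-1-\eps)_+$ acquires a nonnegative singular contribution to its distributional Laplacian; hence $-\Delta(u-1-\eps)_+\le A$ in $\mathcal D'(B_1)$, and the local maximum principle for subsolutions gives $\sup_{B_{3/4}}(u-1-\eps)_+\le C(\|u\|_{L^2(B_1)}+A)$. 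Applying the same to $(1-\eps-u)_+$ yields $\|u\|_{L^\infty(B_{3/4})}\le C_0=C_0(\|u\|_{L^2(B_1)},A,N)$.

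Away from the layer the estimate is classical: if $B_\rho(x)\subset\{|u-1|>\eps\}$ then $|\Delta u|\le A$ there, and interior $W^{2,q}$ (or Schauder) estimates give $|\nabla u(x)|\le C(\rho^{-1}\operatorname{osc}_{B_\rho(x)}u+A\rho)\le C(\rho^{-1}C_0+A)$. Combining this with a covering argument, everything reduces to the following claim: there exist $C,r_1>0$, depending only on $N$, $A$, $C_0$, such that $\operatorname{osc}_{B_r(x_0)}u\le Cr$ whenever $x_0\in B_{3/4}$, $|u(x_0)-1|\le\eps$ and $r\le r_1$. Indeed, given the claim, for $y\in B_{1/2}$ one applies the interior estimate on a ball of radius comparable to $\dist{y}{\{|u-1|\le\eps\}}$, whose oscillation is controlled by chaining the claim from the nearest layer point; and if $y$ lies in the layer the claim bounds $|\nabla u(y)|$ directly.

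To prove the claim I would argue by contradiction and compactness. If it fails there are $u_k$, $\eps_k\in(0,1]$, $x_k\in B_{3/4}$ with $|u_k(x_k)-1|\le\eps_k$, and $r_k\le r_1$ with $\theta_k:=\operatorname{osc}_{B_{r_k}(x_k)}u_k/r_k\to\infty$. Using a maximization device (maximizing $\dist{\cdot}{\partial B_{3/4}}\,|\nabla u_k|$, or the ratio $\operatorname{osc}_{B_\rho(y)}u_k/\rho$ over $y$ near the layer and $\rho$ above a scale tending to $0$) one selects the worst ball so that, setting $v_k(z)=(u_k(x_k+r_kz)-1)/(\theta_kr_k)$, the rescaled functions have unit-order oscillation and a uniform gradient bound on $B_1$, satisfy $v_k(0)\to0$, live on domains exhausting $\R^N$, and have a rescaled layer $L_k=\{|v_k-t_k|<\tau_k\}$ with $t_k,\tau_k\to0$, across which $|\Delta v_k|\le r_kA/(\theta_k\eps_k)$ while $|\Delta v_k|\le r_kA/\theta_k\to0$ off $L_k$. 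The purpose of the scaling is to force the total defect mass $\int_{B_R}|\Delta v_k|$ to zero: the off-layer part is $O(\theta_k^{-1})$, and the layer part, estimated by the co-area formula together with the gradient bound, is $O(r_k^2A/(\theta_kr_k)^2)=O(\theta_k^{-2})$. Hence $v_k\to v_\infty$ in $C^1_\loc(\R^N)$, with $v_\infty$ entire harmonic and globally Lipschitz, so affine by Liouville; since $0$ is a limit of layer points, $\{v_\infty=0\}$ is a hyperplane through $0$. The contradiction then comes from confronting the normalization with the complementary fact---extracted from the two-sided bound $|\Delta u|\le A$ off the layer together with the thinness of the layer---that $u_k$ cannot transition through its $\eps_k$-layer faster than at a fixed rate of order $\sqrt A$, so that $\theta_k$ must stay bounded.

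I expect the main obstacle to be exactly this last step: controlling $u$ near the $\eps$-layer in a scale-invariant way. Off the layer $|\Delta u|\le A$ is harmless, but inside the layer $|\Delta u|$ can be of order $1/\eps$, and the whole content of the proposition is that this does not let the gradient blow up as $\eps\to0$. The delicate points are the correct choice of rescaling (so that the layer width and the rescaled right-hand side both vanish in the limit), a genuinely scale-invariant bound on the mass of the defect Laplacian (it cannot be imported from the a priori Lipschitz constant, which is not uniform in $\eps$), and producing the contradiction from the affine blow-up limit---which, I believe, ultimately requires a one-sided transition estimate built from the two-sided hypothesis. This is presumably why the statement is cited as a black box from Caffarelli, Jerison and Kenig rather than reproved here.
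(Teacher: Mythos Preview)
The paper does not prove this proposition; it is quoted verbatim from Caffarelli--Jerison--Kenig \cite{MR1906591} and used as a black box in the proof of Lemma~\ref{Lemma 5}. You anticipated this correctly in your last sentence.

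As for the argument you sketch: the overall architecture (sup bound, interior estimates away from the layer, reduce to an oscillation bound near the layer, blow up) is reasonable, but the step you flag as ``the main obstacle'' is in fact a genuine gap, not just a delicate detail. After rescaling you need $\int_{B_R}|\Delta v_k|\to 0$. Off the layer this is $O(\theta_k^{-1})$, fine. On the layer you write the bound as $O(\theta_k^{-2})$ ``by the co-area formula together with the gradient bound'', but the co-area formula goes the wrong way here: a \emph{upper} bound on $|\nabla v_k|$ does not give an upper bound on $\mathcal L(\{|v_k|<\tau_k\})$; for that you would need a \emph{lower} bound on $|\nabla v_k|$ in the layer, i.e.\ exactly the nondegeneracy you do not yet have. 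Without that, the layer could occupy a set of positive measure even as $\tau_k\to 0$, and the defect mass need not vanish. Your closing paragraph essentially concedes this circularity.

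For comparison, the proof in \cite{MR1906591} does not proceed by blow-up/compactness at all. It relies on the Alt--Caffarelli--Friedman monotonicity formula applied to $(u-1)_\pm$ (in a version adapted to the inhomogeneous bound $|\Delta u|\le A$ away from the layer), which yields a scale-invariant control on the product $\big(\fint_{B_r}|\nabla(u-1)_+|^2\big)\big(\fint_{B_r}|\nabla(u-1)_-|^2\big)$ directly, bypassing the need to estimate the defect mass in the layer. That monotonicity input is the ``missing idea'' in your outline, and it is what makes the estimate uniform in $\eps$.
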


\section{Limits of mountain pass solutions} \label{sec: generalized}

%\section{Proofs of Theorems \ref{Theorem 1} and \ref{Theorem 2}} \label{Section 4}

Let $\eps_j \searrow 0$, let $u_j = u^{\eps_j}$ be the critical point of $J_{\eps_j}$
obtained in Lemma \ref{Lemma 7}, and let $c_j=  J_{\eps_j}(u_j)$ (an
abuse of notation, since this value was previously denoted $c_{\eps_j}$).
\begin{lemma} \label{Lemma 6}
There exists a  Lipschitz continuous function $u$ on $\bar \Omega$ such
that $u \in H^1_0(\Omega) \cap C^2(\bar \Omega \setminus \bdry{\set{u > 1}})$,
and, for a suitable sequence $\eps_j$,
%\begin{enumroman}
%\begin{enumerate}{(a)}
\begin{enumerate}[label=\textnormal{(\arabic*)}]
%\begin{enumerate}[label=\textnormal{(\alpha*)}]
% format for item label is \item[\mylabel{6:i}{a)}] manually different label.
\item[\mylabel{6.i}{(a)}] $u_j \to u$ uniformly on $\closure{\Omega}$,
\item[\mylabel{6.iii}{(b)}] $\displaystyle - \Delta u = (u - 1)_+^{p-1}$
on $\Omega \setminus \bdry{\set{u > 1}}$,
\item [\mylabel{6.ii}{(c)}] $u_j \to u$ strongly in $H^1_0(\Omega)$,
\item [\mylabel{6.iv}{(d)}]
$ J(u) \le \liminf c_j \le \limsup c_j \le J(u) + \vol{\set{u = 1}}$.
%\end{enumroman}
\end{enumerate}
Moreover, $u$ is nontrivial in the sense that $J(u) + \L(\set{u = 1}) >0$.
\end{lemma}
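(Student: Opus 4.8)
\emph{Sketch of proof.} I would combine the uniform estimates of Section~\ref{sec: mountain pass} with a concentration-compactness analysis of the singular term $\eps_j^{-1}\,\beta((u_j-1)/\eps_j)$. First, to extract the limit: by Lemmas~\ref{Lemma 3}, \ref{Lemma 2} and~\ref{Lemma 4} the $u^\eps$ are bounded in $H^1_0(\Omega)\cap L^\infty(\Omega)$ and satisfy $\set{u^\eps\ge1}\subset\set{\varphi_0\ge1}\strictsubset\Omega$. On the neighborhood $\set{\varphi_0<1/2}$ of $\bdry\Omega$ one has $u^\eps\le\varphi_0<1-\eps$ as soon as $\eps<1/2$, so $\Delta u^\eps=0$ there; since $\bdry\Omega$ is smooth and $u^\eps$ vanishes on it, boundary gradient estimates for harmonic functions together with Lemma~\ref{Lemma 5} on the compact set $\set{\varphi_0\ge1/2}\strictsubset\Omega$ give a uniform bound $\norm[C^{0,1}(\closure\Omega)]{u^\eps}\le C$. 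Arzel\`a--Ascoli together with weak compactness in $H^1_0(\Omega)$ then produce a subsequence (still denoted $\eps_j$) with $u_j\to u$ uniformly on $\closure\Omega$ and $u_j\rightharpoonup u$ in $H^1_0(\Omega)$; the two limits agree by uniqueness of distributional limits, $u$ is Lipschitz on $\closure\Omega$, and $u\in H^1_0(\Omega)$. This is~\ref{6.i}.

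Next I would analyze the limit equation. Write $f_j:=(u_j-1)_+^{p-1}$ (so $f_j\to(u-1)_+^{p-1}$ uniformly) and $\mu_j:=\eps_j^{-1}\,\beta((u_j-1)/\eps_j)\,dx$, both supported in $\set{\varphi_0\ge1}$, so that $-\Delta u_j=f_j-\mu_j$. Testing~\eqref{2} against a fixed $\psi\in C^\infty_0(\Omega)$ with $0\le\psi\le1$ and $\psi\equiv1$ near $\set{\varphi_0\ge1}$ yields the uniform mass bound $\mu_j(\Omega)=\int_\Omega f_j\,dx-\int_\Omega\nabla u_j\cdot\nabla\psi\,dx\le C$, so along a further subsequence $\mu_j\rightharpoonup\mu$ weak-$*$ for a finite positive measure $\mu$ compactly supported in $\Omega$. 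Since $\mu_j$ vanishes near every point where $u\ne1$ (there $\abs{u_j-1}>\eps_j$ for $j$ large), the limit satisfies $\supp\mu\subset\set{u=1}$, and passing to the limit gives $-\Delta u=(u-1)_+^{p-1}-\mu$ in $\D'(\Omega)$. On $\set{u>1}$ uniform convergence forces $u_j>1+\eps_j$ locally for $j$ large, so $\mu_j=0$ and $-\Delta u_j=(u_j-1)^{p-1}$ locally, and elliptic regularity gives $u_j\to u$ in $C^2_{\loc}(\set{u>1})$ with $-\Delta u=(u-1)_+^{p-1}$. On $\interior{\set{u\le1}}$ the equation reads $-\Delta u=-\mu\le0$, so $u$ is subharmonic and $\le1$; by the strong maximum principle each connected component is either contained in $\set{u<1}$, where $\mu=0$ and $u$ is harmonic, or is a component of $\interior{\set{u=1}}$, where $u\equiv1$ --- in either case $u\in C^\infty$ and $-\Delta u=0=(u-1)_+^{p-1}$. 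Combined with the harmonicity near $\bdry\Omega$, this gives~\ref{6.iii} and $u\in C^2(\closure\Omega\setminus\bdry{\set{u>1}})$.

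Then I would deduce strong convergence and the energy inequality. Integrating~\eqref{2} by parts against $u_j$ and against $u$ gives
\[
\norm{u_j}^2=\int_\Omega f_j u_j\,dx-\int_\Omega u_j\,d\mu_j ,\qquad \int_\Omega\nabla u_j\cdot\nabla u\,dx=\int_\Omega f_j u\,dx-\int_\Omega u\,d\mu_j .
\]
Since $f_j\to(u-1)_+^{p-1}$ and $u_j\to u$ uniformly, $\mu_j\rightharpoonup\mu$ with $\mu_j(\Omega)\le C$, and $u\equiv1$ on $\supp\mu$, letting $j\to\infty$ gives $\int_\Omega u_j\,d\mu_j\to\mu(\Omega)$, $\int_\Omega\nabla u_j\cdot\nabla u\,dx\to\norm{u}^2$, and hence $\norm{u_j}^2\to\int_\Omega(u-1)_+^{p-1}u\,dx-\mu(\Omega)=\norm{u}^2$; together with $u_j\rightharpoonup u$ this proves $u_j\to u$ strongly in $H^1_0(\Omega)$, i.e.~\ref{6.ii}. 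For~\ref{6.iv}, write $c_j=\tfrac{1}{2}\norm{u_j}^2+\int_\Omega\B((u_j-1)/\eps_j)\,dx-\tfrac{1}{p}\int_\Omega(u_j-1)_+^p\,dx$; the first and last terms converge to $\tfrac{1}{2}\norm{u}^2$ and $\tfrac{1}{p}\int_\Omega(u-1)_+^p\,dx$, while $\B((u_j-1)/\eps_j)\to1$ on $\set{u>1}$, $\to0$ on $\set{u<1}$, and stays in $[0,1]$ on $\set{u=1}$, so $\liminf_j\int_\Omega\B((u_j-1)/\eps_j)\,dx\ge\L(\set{u>1})$ and $\limsup_j\int_\Omega\B((u_j-1)/\eps_j)\,dx\le\L(\set{u>1})+\L(\set{u=1})$. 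Hence $J(u)\le\liminf c_j\le\limsup c_j\le J(u)+\L(\set{u=1})$, which is~\ref{6.iv}; and since $c_j=c_{\eps_j}\ge\rho^2/3$ by~\eqref{22}, we get $J(u)+\L(\set{u=1})\ge\liminf c_j\ge\rho^2/3>0$, the nontriviality.

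The hard part will be the concentration analysis of $\mu_j$: establishing the uniform total-mass bound via the cutoff test function, extracting the weak-$*$ limit $\mu$, and pinning down $\supp\mu\subset\set{u=1}$. This is exactly what makes both the strong $H^1_0$ convergence in~\ref{6.ii} and the upper bound in~\ref{6.iv} go through, and it is also what forces the regularity statement on $\interior{\set{u\le1}}$ to be obtained from subharmonicity and the strong maximum principle rather than from a direct passage to the limit in the equation.
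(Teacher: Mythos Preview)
Your proof is correct, and parts~(a) and~(d) are essentially the same as the paper's. For parts~(b) and~(c), however, you take a genuinely different route.

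The paper never introduces the defect measure $\mu$. For~(b) on $\interior{\set{u\le1}}$ it argues instead as follows: from $\beta\ge0$ one gets $-\Delta u\le(u-1)_+^{p-1}$ in $\D'(\Omega)$, while from harmonicity of $u$ on $\set{u<1}$ one gets that $\min(u,1)$ has the super-mean-value property, hence $\Delta\min(u,1)\le0$ distributionally; combining the two yields $\Delta u=0$ on $\interior{\set{u\le1}}$ directly. For~(c) the paper multiplies \eqref{7} by $u_j-1$ (not $u_j$), integrates by parts picking up the boundary term $-\int_{\bdry\Omega}\partial u_j/\partial n\,d\sigma$, drops the $\beta$-term by sign, and then identifies the limit of the right side with $\norm{u}^2$ by a separate computation (equations \eqref{10}--\eqref{11}) that again uses the boundary integral. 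This relies on the $C^2$ convergence near $\bdry\Omega$ established in~(a).

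Your approach packages both steps into the single object $\mu$: once $\supp\mu\subset\set{u=1}$ is established, subharmonicity plus the strong maximum principle handle~(b), and the two identities obtained by testing \eqref{2} against $u_j$ and $u$ handle~(c), with the key observation $\int u\,d\mu=\mu(\Omega)$ since $u\equiv1$ on $\supp\mu$. This is cleaner in that the same mechanism drives both conclusions, and it avoids the boundary integrals entirely. The paper's argument is slightly more elementary (no measure theory beyond dominated convergence) but at the cost of the somewhat ad hoc $\min(u,1)$ trick and the need to track $\partial u_j/\partial n$ on $\bdry\Omega$.
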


\begin{proof}
First we prove \ref{6.i}.
The majorant $\varphi_0$ of Lemma \ref{Lemma 4}
gives a uniform lower bound  $\delta_0>0$ on the distance
from $\{u^\eps \ge 1\}$ to $\partial \Omega$.
Thus $u^\eps$ is positive, harmonic and bounded
by $1$  in a $\delta_0$ neighborhood of $\partial \Omega$.  It follows
from standard boundary regularity theory that $u^\eps$ is uniformly
bounded in a $\delta_0/2$ neighborhood in, say,  $C^3$ norm.  In
particular, the family is compact in $C^2$ norm on this set.
By Lemmas \ref{Lemma 2} and \ref{Lemma 5},
the family $u^\eps$ is uniformly  Lipschitz continuous
on the compact subset of $\Omega$ at distance greater or equal to
$\delta_0/2$ from $\partial \Omega$.  Finally, by Lemma \ref{Lemma 3},
$u^\eps$ is uniformly bounded in $H^1_0(\Omega)$.
Thus we can choose $\eps_j$
so that $u_j$ converges uniformly in $\bar \Omega$ to
a Lipschitz function $u$, and so that there is strong
converence in $C^2$ on a $\delta_0/2$ neighborhood of $\partial \Omega$
and, finally, that there is weak convergence of $u_j$ to $u$ in
$H^1_0(\Omega)$.

Next we show that $u$ satisfies the interior part of the
Euler-Lagrange equation:
\[
- \Delta u = (u - 1)_+^{p-1} \quad \mbox{in}\quad \set{u \ne 1}.
\]
Let $\varphi \in C^\infty_0(\set{u >   1})$.
Then $u \ge 1 + 2\, \eps$ on the support of $\varphi$ for some $\eps > 0$. For all sufficiently large $j$, $\eps_j < \eps$ and $|u_j
- u| < \eps$ by \ref{6.i}. Then $u_j \ge 1 + \eps_j$ on the support of
$\varphi$, so testing
\begin{equation} \label{7}
- \Delta u_j = (u_j - 1)_+^{p-1} - \frac{1}{\eps_j}\, \beta\left(\frac{u_j - 1}{\eps_j}\right)
\end{equation}
with $\varphi$ gives
\[
\int_\Omega \nabla u_j \cdot \nabla \varphi\, dx = \int_\Omega (u_j - 1)^{p-1}\, \varphi\, dx.
\]
Passing to the limit gives
\begin{equation} \label{8}
\int_\Omega \nabla u \cdot \nabla \varphi\, dx = \int_\Omega (u - 1)^{p-1}\, \varphi\, dx
\end{equation}
since $u_j$ converges to $u$ weakly in $H^1_0(\Omega)$ and uniformly
on $\Omega$. Hence $u$ is a distributional (and thus a classical) solution
of $- \Delta u = (u - 1)^{p-1}$ in $\set{u > 1}$.

A similar argument shows that $u$ satisfies $-\Delta u = 0$ in $\set{u < 1}$.
We show next that $u$ is also harmonic in the possibly larger set
$\interior{\set{u \le 1}}$. Since $\beta\ge0$, testing \eqref{7} with any
nonnegative $\varphi \in C^\infty_0(\Omega)$ and passing to the limit gives
\begin{equation} \label{19}
- \Delta u \le (u-1)_+^{p-1} \quad \text{in } \Omega
\end{equation}
in the distributional sense.  On the other hand, since
$u$ is harmonic in $\set{u < 1}$, $\min (u,1)$ satisfies the
super-mean value property.  This implies
(see, for instance, \cite[Remark 4.2]{MR618549})
\[
\Delta \min(u,1) \le 0
\]
in the distributional sense.  Combining with \eqref{19}, we find that
\[
\Delta u = 0
\]
as a distribution on the open set $\interior{\set {u\le 1}}$.
Thus the same equation holds in the strong sense, and
this concludes the proof of \ref{6.iii}.   (Note that we
do not exclude the case of connected components of $\interior{\set {u\le 1}}$
on which $u\equiv 1$.)

Since $u_j$
%omitted the symbol \rightharpoonup
tends weakly to $u$ in $H^1_0(\Omega)$, $\norm{u} \le
\liminf \norm{u_j}$.  So to prove \ref{6.ii}, it suffices to show
that $\limsup \norm{u_j} \le \norm{u}$. Recall
that $u_j$ converges in $C^2$ norm to $u$ in a neighborhood
of $\partial \Omega$ in $\bar \Omega$.
Let $n$ denote the outer unit normal to $\partial \Omega$.
Multiplying \eqref{7} by $u_j - 1$, integrating by parts, and
noting that $\beta((t - 1)/\eps_j)\, (t - 1) \ge 0$ for all $t$ gives
\begin{equation} \label{9}
\int_\Omega |\nabla u_j|^2\, dx \le \int_\Omega (u_j - 1)_+^p\, dx - \int_{\bdry{\Omega}} \frac{\partial u_j}{\partial n}\, d\sigma \to \int_\Omega (u - 1)_+^p\, dx - \int_{\bdry{\Omega}} \frac{\partial u}{\partial n}\, d\sigma.
\end{equation}
%since $\partial u_j/\partial n \to \partial u/\partial n$ pointwise
%and $\partial \varphi_0/\partial n \le \partial u_j/\partial n \le 0$
%by Lemma \ref{Lemma 4}.
Fix $0 < \eps < 1$. Taking $\varphi = (u - 1 - \eps)_+$ in \eqref{8} yields
\begin{equation} \label{10}
\int_{\set{u > 1 + \eps}} |\nabla u|^2\, dx = \int_\Omega (u - 1)_+^{p-1}\, (u - 1 - \eps)_+\, dx,
\end{equation}
and integrating $(u - 1 + \eps)_-\, \Delta u = 0$ over $\Omega$ gives
\begin{equation} \label{11}
\int_{\set{u < 1 - \eps}} |\nabla u|^2\, dx = - (1 - \eps) \int_{\bdry{\Omega}} \frac{\partial u}{\partial n}\, d\sigma.
\end{equation}
Adding \eqref{10} and \eqref{11}, and letting $\eps \searrow 0$, we find
that\footnote{Here we are using the well known fact that
$\displaystyle \int_{\{u=1\}} |\nabla u|^2 \, dx = 0$.}
\[
\int_\Omega |\nabla u|^2\, dx = \int_\Omega (u - 1)_+^p\, dx - \int_{\bdry{\Omega}} \frac{\partial u}{\partial n}\, d\sigma.
\]
This together with \eqref{9} gives
\[
\limsup \int_\Omega |\nabla u_j|^2\, dx \le \int_\Omega |\nabla u|^2\, dx
\]
as desired.

To prove \ref{6.iv}, write
\begin{multline*}
J_{\eps_j}(u_j) = \int_\Omega \left[\frac{1}{2}\, |\nabla u_j|^2 + \B\left(\frac{u_j - 1}{\eps_j}\right) \goodchi_{\set{u \ne 1}}(x) - \frac{1}{p}\, (u_j - 1)_+^p\right] dx\\[10pt]
+ \int_{\set{u = 1}} \B\left(\frac{u_j - 1}{\eps_j}\right) dx.
\end{multline*}
Since $\B((u_j - 1)/\eps_j)\, \goodchi_{\set{u \ne 1}}$ converges pointwise to $\goodchi_{\set{u > 1}}$ and is bounded by $1$, the first integral converges to $J(u)$ by \ref{6.i} and \ref{6.ii}. Since $0 \le \B(t) \le 1$ for all $t$,
\[
0 \le \int_{\set{u = 1}} \B\left(\frac{u_j - 1}{\eps_j}\right) dx \le \vol{\set{u = 1}}.
\]
\ref{6.iv} follows.

By \ref{6.iv} and \eqref{22},
\[
J(u) + \vol{\set{u = 1}} \ge \frac{\rho^2}{3} > 0
\]
and hence $u$ is nontrivial.
\end{proof}

\section{Critical points on the Nehari manifold} \label{sec: Nehari nondegeneracy}

\begin{lemma}\label{lem: gen.Nehari}  Every nonzero $v\in C^0(\bar \Omega) \cap H_0^1(\Omega)$ that solves $-\Delta v = (v-1)_+^{p-1}$ in
$\Omega \setminus \partial \set{v>1}$
belongs to the Nehari manifold $\M$ and satisfies
$J(v) >0$.
\end{lemma}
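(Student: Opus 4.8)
The plan is to derive the Nehari identity
\[
\int_{\set{v>1}}|\nabla v|^2\,dx=\int_{\set{v>1}}(v-1)^p\,dx
\]
by testing the interior equation against a cut-off of $(v-1)_+$, and then to read off both $v\in\M$ and $J(v)>0$ from it. First I would record the elementary structural facts: since $v\in C^0(\bar\Omega)\cap H^1_0(\Omega)$ vanishes on $\bdry\Omega$, the set $\set{v\ge 1}$ is a compact subset of $\Omega$, so $\set{v>1}\strictsubset\Omega$ is open with $v\equiv 1$ on $\bdry\set{v>1}$, and $v$ is bounded on $\bar\Omega$. Because $\set{v>1}$ is an open subset of $\Omega\setminus\bdry\set{v>1}$, the hypothesis gives that $v$ is a weak solution of $-\Delta v=(v-1)^{p-1}$ on $\set{v>1}$.

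Next, for $\delta>0$ the function $(v-1-\delta)_+$ belongs to $H^1(\Omega)$ and is supported in $\closure{\set{v>1+\delta}}$, which is a compact subset of the open set $\set{v>1}$ (every point there has $v\ge 1+\delta>1$). Hence it is an admissible test function for the equation on $\set{v>1}$, and integrating by parts yields
\[
\int_{\set{v>1+\delta}}|\nabla v|^2\,dx=\int_{\set{v>1+\delta}}(v-1)^{p-1}(v-1-\delta)\,dx .
\]
I would then let $\delta\searrow 0$: the left side increases to $\int_{\set{v>1}}|\nabla v|^2\,dx$ by monotone convergence, and this is finite since $v\in H^1_0(\Omega)$; on the right side the integrand is nonnegative, converges pointwise to $(v-1)_+^p$, and is dominated by $(v-1)_+^p$, which lies in $L^1(\Omega)$ because $v$ is bounded and $\set{v>1}\strictsubset\Omega$. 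This establishes the Nehari identity. The main point requiring care is precisely this step: one cannot simply test with $(v-1)_+$ itself, since its support reaches up to the free boundary $\bdry\set{v>1}$, where the equation is not assumed to hold; the $\delta$-shift is what pushes the support into the interior region.

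It remains to see that the common value above is positive and that $J(v)>0$. If $\set{v>1}$ were empty, then $\bdry\set{v>1}=\emptyset$, so $-\Delta v=0$ weakly on all of $\Omega$; testing with $v\in H^1_0(\Omega)$ forces $v\equiv 0$, contradicting $v\ne 0$. Thus $\set{v>1}$ is a nonempty open set, so $\vol{\set{v>1}}>0$ and $\int_{\set{v>1}}(v-1)^p\,dx>0$, which together with the identity gives $v\in\M$. Finally, using $\int_\Omega|\nabla v|^2\,dx\ge\int_{\set{v>1}}|\nabla v|^2\,dx$ and the identity,
\[
J(v)=\tfrac12\int_\Omega|\nabla v|^2\,dx+\vol{\set{v>1}}-\tfrac1p\int_{\set{v>1}}(v-1)^p\,dx\ge\Bigl(\tfrac12-\tfrac1p\Bigr)\int_{\set{v>1}}(v-1)^p\,dx+\vol{\set{v>1}}>0,
\]
since $p>2$. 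This completes the plan.
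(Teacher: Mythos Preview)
Your proof is correct and follows essentially the same route as the paper: the key identity is obtained by testing the interior equation with $(v-1-\delta)_+$ and letting $\delta\searrow 0$ (the paper invokes this via a cross-reference to the identical computation carried out earlier for the limit $u$), the nontriviality of $\set{v>1}$ is deduced from harmonicity when $v\le 1$ everywhere, and the positivity of $J(v)$ follows immediately from the Nehari identity together with $p>2$.
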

\begin{proof}
As before for $u^\eps$, if $v\le 1$ in $\Omega$, then it
is harmonic and hence identically zero.  Thus if $v$ is
not identically zero, $\set{v>1}$ is a nonempty open set, where it
it satisfies $- \Delta v = (v - 1)^{p-1}$.
As in the proof of Lemma \ref{Lemma 6}  \ref{6.ii},
multiplying this equation by $v - 1$ and integrating
over the set $\set{v > 1}$ shows that $v$ lies on $\M$. Finally,
since $v\in \M$,
\[
J(v) = \frac{1}{2} \int_{\set{v < 1}} |\nabla v|^2\, dx + \left(\frac{1}{2} - \frac{1}{p}\right) \int_{\set{v > 1}} |\nabla v|^2\, dx + \vol{\set{v > 1}} > 0. \QED
\]
\end{proof}

\begin{proposition} \label{Proposition 9}
We have
\begin{equation} \label{31}
c^* \le \inf_{v \in \M}\, J(v).
\end{equation}
If $v \in \M$ and $J(v) = c^*$, then $v$ is a mountain pass point of $J$.
\end{proposition}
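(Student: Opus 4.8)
The plan is to prove both assertions at once by attaching to each $v\in\M$ an explicit path in $\Gamma$ along which $v$ is the strict maximum of $J$.

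\emph{The inequality \eqref{31}.} Fix $v\in\M$ and, for $t\ge 0$, put
\[
v_t := \min(v,1) + t\,(v-1)_+ \in H^1_0(\Omega),
\]
so $v_1=v$ and $v_0=\min(v,1)\le 1$. The two facts that make this work are $\set{v_t>1}=\set{v>1}$ for every $t>0$, and that $\nabla v_t$ equals $\nabla v$ on $\set{v<1}$ and $t\,\nabla v$ on $\set{v>1}$ a.e.\ (with $\nabla v=0$ a.e.\ on $\set{v=1}$). Hence, invoking the Nehari relation $\int_{\set{v>1}}|\nabla v|^2\,dx=\int_{\set{v>1}}(v-1)^p\,dx=:B>0$, one gets for $t>0$ the clean identity
\[
J(v_t)=a+\Big(\tfrac{t^2}{2}-\tfrac{t^p}{p}\Big)B, \qquad a:=\tfrac12\int_{\set{v<1}}|\nabla v|^2\,dx+\vol{\set{v>1}}.
\]
Since $p>2$, the scalar function $t\mapsto t^2/2-t^p/p$ is increasing on $(0,1)$, has its unique maximum $\tfrac12-\tfrac1p>0$ at $t=1$, and tends to $-\infty$; thus $J(v_t)<J(v)$ whenever $t\in(0,\infty)$, $t\ne 1$, while $J(v_T)<0$ for $T$ large and $J(v_0)=a-\vol{\set{v>1}}<J(v)$.

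To turn this into a path from $0$, prepend the segment $s\mapsto s\,v_0$ on $[0,\tfrac12]$ (where $s\,v_0\le 1$, so $J(s\,v_0)=\tfrac{s^2}{2}\|v_0\|^2$ rises from $0$ to $J(v_0)$) and then follow $s\mapsto v_{\phi(s)}$ on $[\tfrac12,1]$ with $\phi$ increasing from $0$ to a large $T$. The result is a path $\gamma_v\in\Gamma$ — continuous into $H^1_0(\Omega)$ although $J$ jumps upward across $s=\tfrac12$ — whose image attains $\sup J=J(v)$ only at $v$. Hence $c^*\le\max_{\gamma_v([0,1])}J=J(v)$, and the infimum over $v\in\M$ yields \eqref{31}.

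\emph{The mountain pass property.} Suppose moreover $J(v)=c^*$; then $v$ is the unique maximizer of $J$ on $\gamma_v$, at level $c^*$, and $\gamma_v$ approaches $v$ through $v_t$ with $t<1$ and with $t>1$. Fix a neighborhood $U$ of $v$. As $v_t\to v$ in $H^1_0(\Omega)$ with $J(v_t)<c^*$ for $t\ne 1$, the set $\set{w\in U:J(w)<c^*}$ is nonempty. Assume it were path connected; choose $t_-<1<t_+$ with $v_{t_\pm}\in U$ and join $v_{t_-}$ to $v_{t_+}$ by a path $\pi$ inside $\set{w\in U:J(w)<c^*}$. Replacing the subarc of $\gamma_v$ that runs from $v_{t_-}$ through $v$ to $v_{t_+}$ by $\pi$ gives $\widetilde\gamma\in\Gamma$ on which, by the identity above, $J\le c^*-\eta$ off $\pi$ for some $\eta>0$, and $J<c^*$ on $\pi$; so $\sup_{\widetilde\gamma}J\le c^*$. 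If $\sup_{\widetilde\gamma}J<c^*$ this contradicts the definition of $c^*$; it remains to exclude the residual case $\sup_{\widetilde\gamma}J=c^*$ (necessarily not attained).

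That residual case is the single genuine obstacle, and it stems from the failure of upper semicontinuity of $J$: upper semicontinuity breaks down precisely at functions $w$ with $\vol{\set{w=1}}>0$, since a tiny perturbation pushing $w$ above $1$ on such a flat spot makes $\vol{\set{w>1}}$ jump up — so the compact image of $\pi$ may have $\sup J=c^*$ even though $J<c^*$ at each of its points. I would remove it by perturbing the connecting path \emph{downward}: fix $\psi\in H^1_0(\Omega)$ with $\psi>0$, replace $\pi$ by $\pi-\delta\psi$ for small $\delta>0$, and bridge $v_{t_\pm}$ to $v_{t_\pm}-\delta\psi$ via $\sigma\mapsto v_{t_\pm}-\sigma\delta\psi$. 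Subtracting a strictly positive function only shrinks superlevel sets $\set{\cdot>1}$ and turns any flat spot at level $1$ of a function $w$ into the level set $\set{w-1=\delta\psi}$, which is null for all but countably many $\delta$. Combining $\vol{\set{w-\delta\psi>1}}\le\vol{\set{w>1}}$, continuity of the remaining terms of $J$, and a compactness argument for the points of $\pi$ at which $J$ approaches $c^*$ — these can accumulate only at functions with large flat spots at level $1$, exactly where the downward shift helps most — one obtains $\sup_{\pi-\delta\psi}J<c^*$ for $\delta$ small, the two bridges also staying below $c^*$. The resulting spliced path then has maximum strictly below $c^*$, contradicting $c^*=\inf_\Gamma\max J$. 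Therefore $\set{w\in U:J(w)<c^*}$ is not path connected, and $v$ is a mountain pass point.
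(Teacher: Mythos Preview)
Your construction is exactly the paper's: the fibering curve $v_t=\min(v,1)+t(v-1)_+$ is the paper's $\zeta_v(s)$ for $s>0$, and your prepended segment to $0$ is the paper's $\zeta_v$ on $[-1,0]$. The computation $J(v_t)=a+(\tfrac{t^2}{2}-\tfrac{t^p}{p})B$ and the resulting inequality $c^*\le J(v)$ are correct (up to a harmless reparametrization slip: on $[0,\tfrac12]$ you need $s\mapsto 2s\,v_0$ to end at $v_0$, not $s\mapsto s\,v_0$).

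For the mountain-pass assertion you follow the paper's splicing argument and then go further, correctly isolating a point the paper passes over in silence: since $J$ is not upper semicontinuous, a path on which $J<c^*$ pointwise may still have $\sup J=c^*$, and that does not contradict $c^*=\inf_\Gamma\sup J$. Your proposed repair, however, does not close. Write $J(w-\delta\psi)=H(w,\delta)+\vol{\{w>1+\delta\psi\}}$ with $H$ the continuous part. On the compact image $K$ of $\pi$ one has $|H(w,\delta)-H(w,0)|\le C\delta$ uniformly, while the measure term satisfies only $\vol{\{w>1+\delta\psi\}}\le\vol{\{w>1\}}$; combining gives $J(w-\delta\psi)\le J(w)+C\delta$. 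To conclude $\sup_K J(\cdot-\delta\psi)<c^*$ you therefore need $\delta<\inf_{w\in K}(c^*-J(w))/C$, and that infimum is zero precisely in the residual case you are trying to exclude. The heuristic that accumulation points $w^*$ carry large flat spots is correct, but the flat spot of $w^*$ is already absent from $\vol{\{w^*>1\}}$; what the shift removes is the \emph{nearby} excess $\vol{\{w_n>1\}}-\vol{\{w^*>1\}}$, and this gain, while real for each fixed $w^*$, is not uniform over $K$ for a single $\delta$ by the argument you sketch. The ``compactness argument'' you invoke is thus circular as written; making it work (if it can be made to work) requires substantially more than what you have indicated.
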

\begin{proof}

For $v\in H_0^1(\Omega)$, set
\[
v^+ = (v - 1)_+, \quad v^- = 1 - (v - 1)_-; \qquad v = v^- + v^+.
\]
Let
\[
W =\set{v \in H^1_0(\Omega) : v^+ \neq 0, \ \mbox{and} \  v^- \neq 0}
\]
Then $\M \subset W$, and for $v\in W$, we define
the curve
\[
\zeta_v(s) = \begin{cases}
(1 + s)\, v^-, & s \in [-1,0]\\[5pt]
v^- + s\, v^+, & s \in (0,\infty),
\end{cases}
\]
which passes through $v$ at $s = 1$. For $s \in [-1,0]$,
\[
J(\zeta_v(s)) = \frac{(1 + s)^2}{2} \int_{\set{v < 1}} |\nabla v|^2\, dx
\]
is increasing in $s$.  There is a discontinuity in $J$ at $s=0$:
\[
\lim_{s \searrow 0}\, J(\zeta_v(s)) = J(\zeta_v(0)) + \vol{\set{v > 1}} > J(\zeta_v(0)).
\]
For $s \in (0,\infty)$,
\begin{multline} \label{30}
J(\zeta_v(s)) = \frac{1}{2} \int_{\set{v < 1}} |\nabla v|^2\, dx + \frac{s^2}{2} \int_{\set{v > 1}} |\nabla v|^2\, dx - \frac{s^p}{p} \int_{\set{v > 1}} (v - 1)^p\, dx\\[7.5pt]
+ \vol{\set{v > 1}}
\end{multline}
and
\[
\frac{d}{ds}\, J(\zeta_v(s)) = s \left[\int_{\set{v > 1}} |\nabla v|^2\, dx - s^{p-2} \int_{\set{v > 1}} (v - 1)^p\, dx\right].
\]
Define
\begin{equation} \label{71}
s_v = \left[\frac{\dint_{\set{v > 1}} |\nabla v|^2\, dx}{\dint_{\set{v > 1}} (v - 1)^p\, dx}\right]^{1/(p-2)}.
\end{equation}
Thus, we see that $J(\zeta_v(s))$ increases for $s \in [-1,s_v)$,
attains its maximum at $s = s_v$, decreases for $s \in (s_v,\infty)$, and
\begin{equation} \label{26}
\lim_{s \to \infty}\, J(\zeta_v(s)) = - \infty.
\end{equation}

For each $v \in \M$, \eqref{26} implies that
we may choose $\bar{s} > 1$ sufficiently large
so that $J(\zeta_v(\bar{s})) < 0$.  Note
that $s_v=1$.  Therefore,
\[
\gamma_v(t) = \zeta_v((\bar{s} + 1)\, t - 1), \quad t \in [0,1]
\]
defines a path $\gamma_v \in \Gamma$ such that
\[
\max_{w \in \gamma_v([0,1])}\, J(w) = J(\zeta_v(s_v)) = J(v),
\]
so $c^* \le J(v)$. Thus \eqref{31} holds.

Next, suppose $v\in \M$ and $J(v) = c^*$.  Let $U$ be a neighborhood of $v$. The
path $\gamma_v$ passes through $v$ at $t = 2/(\bar{s} + 1) =: \bar{t}$ and
$J(\gamma_v(t)) < c$ for $t \ne \bar{t}$. By the continuity of $\gamma_v$,
there exist $0 < t^- < \bar{t} < t^+ < 1$ such that $\gamma_v(t^\pm) \in
U$, in particular, the set $\set{w \in U : J(w) < c}$ is nonempty. If
it is path connected, then this set contains a path $\eta$ joining
$\gamma_v(t^\pm)$, and reparametrizing $\restr{\gamma_v}{[0,t^-]} \cup
\eta \cup \restr{\gamma_u}{[t^+,1]}$ gives a path in $\Gamma$ on which
$J < c^*$, contradicting the definition of $c^*$. So the set is not path
connected, and $v$ is a mountain pass point of $J$.  This concludes
the proof of Proposition \ref{Proposition 9}.
\end{proof}

We can now conclude the proof of part (b) of Theorem \ref{Theorem 1}.
The limit $u$ obtained in Lemma \ref{Lemma 6} belongs to $\M$ by
Lemma \ref{Lemma 6} \ref{6.iii} and Lemma \ref{lem: gen.Nehari}.
Hence
\[
\inf_\M\, J \le J(u).
\]
By Lemma \ref{Lemma 6} \ref{6.iv}, Lemma \ref{Lemma 1}, and
\eqref{31}, we also have
\[
J(u) \le \liminf c_j \le \limsup c_j \le c^* \le \inf_\M\, J.
\]
In all,
\[
J(u) = c^* = \inf_\M\, J.
\]
Thus, $u$ minimizes $J$ restricted to $\M$, and by Proposition
\ref{Proposition 9} it is a mountain pass point of $J$.
By construction $u$ is Lipschitz continuous on $\bar \Omega$.

The inequalities of the preceding paragraph also show
that
\[
\lim_{j\to \infty} c_j = c^*.
\]
This property will enable us to take the limit in the variational
equations for $u_j$ to show  that
$u$ is a variational solution in the following sense.
\begin{definition} \label{def:variational}
A {\em variational solution} $u$ of the Euler-Lagrange equation
for $J$ is a function $u \in H_0^1(\Omega)$ satisfying
\[
\int_\Omega
\left[\left( \frac12 |\nabla u|^2
+ \chi_{\set{u>1}} - \frac1p (u-1)_+^p\right) \mbox{div}\, \Phi
- \nabla u(D\Phi)\cdot \nabla u \right] \, dx = 0 \, .
\]
for every $\Phi\in C_0^1(\Omega,\R^N)$.
\end{definition}

Note first that $c_j \to c^*$ implies
$J_{\eps_j} (u_j) \to J(u)$ as $j\to \infty$.
Since $u_j$ converges to $u$ uniformly
and strongly in $H_0^1(\Omega)$, we obtain
\[
\lim_{j\to \infty} \int_\Omega \B\left(\frac{u_j-1}{\eps_j}\right)
\, dx = \L(\set{u>1}).
\]
Hence,
\[
\limsup_{j\to\infty}
\int_{\set{u\le 1}} \B\left(\frac{u_j-1}{\eps_j}\right) \, dx
\le
\L(\set{u>1}) -
\liminf_{j\to \infty} \int_{\set{u>1}}
\B\left(\frac{u_j-1}{\eps_j}\right) \, dx
\]
On the other hand, because $u_j$ tends uniformly to $u$,
\[
\liminf_{j\to \infty} \int_{\set{u>1}}
\B\left(\frac{u_j-1}{\eps_j}\right) \, dx
\ge \L(\set{u \ge 1+\delta})
\]
for every $\delta>0$.  Taking the limit as $\delta\to 0$,
we find that
\[
\liminf_{j\to \infty} \int_{\set{u>1}}
\B\left(\frac{u_j-1}{\eps_j}\right) \, dx
\ge \L(\set{u > 1})
\]
Therefore,
\begin{equation} \label{eq:negsetcontrol}
\limsup_{j\to\infty}
\int_{\set{u\le 1}} \B\left(\frac{u_j-1}{\eps_j}\right) \, dx = 0
\end{equation}
It follows from this and the dominated convergence theorem that
\begin{align*}
\lim_{j\to\infty}
\int_\Omega \B\left(\frac{u_j-1}{\eps_j}\right)\mbox{div}\, \Phi \, dx
& =
\lim_{j\to\infty}
\int_{\set{u>1}} \B\left(\frac{u_j-1}{\eps_j}\right)\mbox{div}\, \Phi \, dx  \\
& = \int_{\set{u>1}} \mbox{div}\, \Phi \, dx
\end{align*}
This limiting value takes care of the only potentially discontinuous
term in the variational equation.   The others tend to the
appropriate limits because $u_j$ tends uniformly to $u$ and strongly in $H_0^1(\Omega)$.
Thus the variational equation for $u$ holds because it is the limit of
the variational equation \eqref{eq:eps.variation} for $u_j$.
This concludes the proof of part (c) of Theorem \ref{Theorem 1}.

\section{Nondegeneracy} \label{sec:nondegeneracy}

In this section we prove our main estimate of nondegeneracy.
\begin{proposition} \label{Proposition 2}
If $u$ is a  Lipschitz continuous minimizer of $\restr{J}{\M}$
that satisfies the interior Euler-Lagrange equation \eqref{eq:interior},
then $u$ is nondegenerate as in  Definition \ref{def:nondegenerate}.
\end{proposition}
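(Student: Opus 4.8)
The plan is to argue by contradiction, exploiting the superlinearity $p>2$ together with the fact that $u$ minimizes $J$ on the Nehari manifold $\M$. Suppose $u$ is degenerate. Then there is a sequence of points $x_k\in\set{u>1}$ with $d_k := \dist{x_k}{\set{u\le 1}}\to 0$ and $u(x_k) - 1 = o(d_k)$. Set $y_k\in\set{u\le 1}$ realizing the distance, so $B_{d_k}(x_k)\subset\set{u>1}$ and $y_k\in\bdry B_{d_k}(x_k)\cap\bdry\set{u>1}$. The idea is to rescale: let $u_k(x) := (u(x_k + d_k x) - 1)/d_k$, defined on $B_1(0)$. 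By the Lipschitz bound on $u$ (Theorem~\ref{Theorem 1}(c)), the $u_k$ are uniformly Lipschitz, and $u_k(0) = o(1)\to 0$ while $u_k\ge 0$ on $B_1(0)$ (since $B_{d_k}(x_k)\subset\set{u>1}$, actually $u_k > 0$ there). On $B_1(0)$ we have $-\Delta u_k = d_k^{\,2}(u(x_k + d_k x) - 1)_+^{p-1} = d_k^{\,2}\,(d_k u_k)^{p-1} = d_k^{\,p} u_k^{p-1}$, and since $u_k$ is bounded and $p>2$, the right-hand side tends to $0$. Passing to a subsequence, $u_k\to u_0$ locally uniformly with $u_0$ harmonic in $B_1(0)$, $u_0\ge 0$, and $u_0(0)=0$; by the strong maximum principle (or Harnack), $u_0\equiv 0$ in $B_1(0)$.

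Now I would convert this flatness into a competitor that strictly lowers $J$ on $\M$, contradicting minimality. The mechanism: because $u_0\equiv 0$, the function $u$ is very close to $1$ throughout the large ball $B_{d_k/2}(x_k)$ — more precisely, $\sup_{B_{d_k/2}(x_k)}(u-1) = o(d_k)$ — yet $\set{u>1}$ contains all of $B_{d_k/2}(x_k)$, so this ball contributes a full $\L(B_{d_k/2}(x_k)) \sim c_N d_k^N$ to the term $\int_{\set{u>1}} 1\,dx = \L(\set{u>1})$ in $J$, while contributing only $O\big(\int_{B_{d_k/2}(x_k)}|\nabla u|^2\big)$ to the Dirichlet energy and a negligible amount $O(d_k^N\cdot(o(d_k))^p / d_k^{\,?})$ to the $(u-1)_+^p$ term. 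I would replace $u$ on $B_{d_k/2}(x_k)$ by a harmonic function $h$ with the same boundary data on $\bdry B_{d_k/2}(x_k)$ (harmonic replacement). Since the boundary values of $u-1$ are $o(d_k)$ and $u-1$ is already nearly harmonic there (its Laplacian is $O(d_k^p\cdot o(d_k)^{p-1})$ after unscaling — tiny), the replacement $h$ satisfies $|h-1| = o(d_k)$, in particular $h<1$ on most of the ball, so the new function $w$ has $\set{w>1}\cap B_{d_k/2}(x_k)$ of measure $o(d_k^N)$, saving essentially the full $c_N(d_k/2)^N$ from the $\chi$-term, while the Dirichlet energy changes by at most the energy of $u-1$ on the ball, which the rescaling shows is $o(d_k^N)$ (since $\nabla u_k\to 0$ in $L^2_{loc}$, because $u_k\to 0$ in $H^1$ by the equation and uniform convergence). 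Finally I reproject $w$ onto $\M$ via the curve $\zeta_w(s)$ from Proposition~\ref{Proposition 9}: since $s\mapsto J(\zeta_w(s))$ is maximized at $s_w$ and $J(\zeta_w(s_w)) = J(w)$ only when $w$ is already on $\M$, and in any case $\inf_\M J \le J(\zeta_w(s_w)) \le J(w) + (\text{correction})$, I get $\inf_\M J \le J(w) + o(d_k^N) < J(u) = \inf_\M J$, a contradiction once $k$ is large.

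There is one point requiring care: the reprojection. After harmonic replacement, $w$ need not lie on $\M$, and I must check that $J(\zeta_w(s_w))$ is still strictly below $J(u)$. Here the superlinearity $p>2$ is essential and is exactly where the argument differs from the minimizer case: the map $s\mapsto J(\zeta_w(s))$ on $(0,\infty)$ has the form (quadratic in $s$) $-$ (degree-$p$ in $s$) $+$ const, so its maximum over $s\ge -1$ is comparable to $J(w)$ plus a controlled error governed by the ratio $\int_{\set{w>1}}|\nabla w|^2 \big/ \int_{\set{w>1}}(w-1)^p$; one checks that replacing $u$ by $w$ perturbs $s_w$ away from $1$ only slightly and changes the max-value of $J$ along the curve by $O$ of the changes in the Dirichlet energy and the volume of $\set{w>1}$, both $o(d_k^N)$, while the genuine gain from the $\chi$-term is of order $d_k^N$. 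So the net change is negative.

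I expect the main obstacle to be bookkeeping the competitor precisely: one must simultaneously (i) control the Dirichlet-energy cost of harmonic replacement in terms of the rescaled quantity $\|\nabla u_k\|_{L^2(B_{1/2})}^2 = o(1)$, which unscales to $o(d_k^N)$; (ii) verify that the replaced function really does shrink $\L(\set{w>1}\cap B_{d_k/2}(x_k))$ to $o(d_k^N)$ — this needs a quantitative maximum-principle/barrier estimate showing $u-1 \le o(d_k)$ not just at the center but on a definite fraction of the ball, which is where the harmonicity of the blow-up limit and the strong maximum principle do the work; and (iii) handle the Nehari reprojection uniformly. Step (ii) is the crux; everything else is routine once the blow-up limit is shown to be identically zero.
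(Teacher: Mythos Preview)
Your overall strategy --- rescale, build a competitor on a small ball, reproject onto $\M$ --- matches the paper's. But there is a fatal error in step (ii): harmonic replacement does \emph{not} shrink $\set{w>1}$ at all. You chose the ball $B_{d_k/2}(x_k)\subset B_{d_k}(x_k)\subset\set{u>1}$, so $u>1$ on the entire boundary $\partial B_{d_k/2}(x_k)$. The harmonic function $h$ with those boundary values therefore satisfies $h>1$ throughout the ball by the minimum principle, and hence $\set{w>1}\cap B_{d_k/2}(x_k)=B_{d_k/2}(x_k)$: no volume is saved in the $\chi$-term. Your sentence ``$|h-1|=o(d_k)$, in particular $h<1$ on most of the ball'' confuses small with negative. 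What remains is a tiny decrease of Dirichlet energy (of order $o(d_k^{N+p})$, since $u$ is already nearly harmonic there), far too small to survive the Nehari reprojection and yield a contradiction.

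The paper fixes exactly this point: instead of harmonic replacement, it truncates. With $\alpha:=v(0)$ and the Harnack estimate $v\le C(\alpha+r^p)$ on $B_{2/3}(0)$, it sets
\[
w(y)=\min\bigl\{v(y),\,C(\alpha+r^p)\psi(y)\bigr\},
\]
where $\psi$ is a smooth cutoff equal to $0$ on $\overline{B_{1/3}(0)}$ and $1$ outside $B_{2/3}(0)$. Unscaling, the competitor $z$ is forced to equal $1$ on $\overline{B_{r/3}(x_0)}$, which removes a volume $\sim r^N$ from $\set{u>1}$ --- a genuine gain of order $r^N$ in the $\chi$-term. The Dirichlet cost of the truncation is $\int_{\mathcal D}|\nabla z|^2 = C^2(\alpha+r^p)^2 r^N\cdot O(1)$, so after the Nehari projection (whose effect the paper tracks explicitly via $s_z$) one obtains
\[
(\alpha+r^p)^2 r^N + O(r^{q+N}) \gtrsim r^N,
\]
forcing $\alpha\ge c>0$ once $r\le r_0$. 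Replace your harmonic replacement by this cutoff construction and the rest of your outline (blow-up, Harnack, Nehari reprojection) goes through.
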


\begin{proof} For $v \in W$, $\zeta_v$ intersects $\M$ exactly at one point, namely,
where $s = s_v$, and $s_v = 1$ if $v \in \M$. So we can define a
continuous projection $\pi : W \to \M$ by
\[
\pi(v) = \zeta_v(s_v) = v^- + s_v\, v^+.
\]

\begin{lemma} \label{Lemma 8}
For $v \in W$,
\[
J(\pi(v)) = \frac{1}{2} \int_{\set{v < 1}} |\nabla v|^2\, dx + \left(\frac{1}{2} - \frac{1}{p}\right) s_v^2 \int_{\set{v > 1}} |\nabla v|^2\, dx + \vol{\set{v > 1}}.
\]
In particular, for $v \in \M$, since $\pi(v) = v$,
\[
J(v) = \frac{1}{2} \int_{\set{v < 1}} |\nabla v|^2\, dx + \left(\frac{1}{2} - \frac{1}{p}\right) \int_{\set{v > 1}} |\nabla v|^2\, dx + \vol{\set{v > 1}}.
\]
\end{lemma}
\begin{proof}
$J(\pi(v))$ is given by \eqref{30} with $s = s_v$, and
\[
s_v^2 \int_{\set{v > 1}} |\nabla v|^2\, dx = s_v^p \int_{\set{v > 1}} (v - 1)^p\, dx. \QED
\]
\end{proof}

Now consider $u$
Suppose that $B_r(x_0) \subset \set{x\in \Omega: u(x)>1}$ and
$\exists\, x_1 \in \partial B_r(x_0)$ such that $u(x_1) = 1$.
Define
\[
v(y) = \frac{1}{r}\, (u(x_0 + ry) - 1).
\]
Our goal is to show that
\[
\alpha : = v(0)\ge c >0.
\]
We begin by observing that
\begin{equation} \label{35}
0 < v(y) = \frac1r (u(x_0+ry) - u(x_1)) \le \frac{L}{r}|x_0 - x_1 +ry|
\le 2L \quad \forall y \in B_1(0),
\end{equation}
where $L$ is the Lipschitz constant of $u$ in $\set{u \ge 1}$, and
\[
- \Delta v = r^p v^{p-1} \quad \text{in } B_1(0).
\]
Define $h$ by
\[
\left\{\begin{aligned}
- \Delta h & = r^p v^{p-1} && \text{in } B_1(0)\\[10pt]
h & = 0 && \text{on } \bdry{B_1(0)}.
\end{aligned}\right.
\]
Then $|h| \le C L^{p-1} r^p$, and applying the Harnack inequality
to $v-h + \max h$, there is a constant $C$ depending on $N$ and
$L$ such that
\[
v(y) \le C\, (\alpha + r^p) \quad \forall y \in B_{2/3}(0).
\]

Take a smooth cutoff function $\psi : B_1(0) \to [0,1]$ such that $\psi = 0$ in $\closure{B_{1/3}(0)}$, $0 < \psi < 1$ in $B_{2/3}(0) \setminus \closure{B_{1/3}(0)}$ and $\psi = 1$ in $B_1(0) \setminus B_{2/3}(0)$, let
\[
w(y) = \begin{cases}
\min \set{v(y),C\, (\alpha + r^p)\, \psi(y)}, & y \in B_{2/3}(0)\\[7.5pt]
v(y), & \mbox{otherwise}, %y \in V \setminus B_{2/3}(0),
\end{cases}
\]
and set $z(x) = 1 + r w((x-x_0)/r)$. Since $u$ is a minimizer of $\restr{J}{\M}$,
\[
J(u) \le J(\pi(z)).
\]
Since $z^- = u^-$, $z = 1$ in $\closure{B_{r/3}(x_0)}$, and $\set{z > 1} = \set{u > 1} \setminus \closure{B_{r/3}(x_0)}$, Lemma \ref{Lemma 8} implies
this inequality can be rewritten as
\[
\left(\frac{1}{2} - \frac{1}{p}\right) \int_{\set{u > 1}} |\nabla u|^2\, dx + \vol{B_{r/3}(x_0)} \le \left(\frac{1}{2} - \frac{1}{p}\right) s_z^2 \int_{\set{u > 1}} |\nabla z|^2\, dx.
\]
Let $y = (x-x_0)/r$ and define
\[
\D := \set{x \in B_{2r/3}(x_0) : v(y) > C\, (\alpha + r^p)\, \psi(y)}.
\]
Because $z = u$ outside $\D$, the last inequality implies
\begin{equation} \label{36}
s_z^2 \int_\D |\nabla z|^2\, dx + \left(s_z^2 - 1\right) \int_{\set{u > 1} \setminus \D} |\nabla u|^2\, dx \ge \frac{2p}{p - 2}\, \vol{B_{1/3}(0)} r^N.
\end{equation}

Since $\set{z > 1} = \set{u > 1} \setminus \closure{B_{r/3}(x_0)}$ and
$z = 1$ in $\closure{B_{r/3}(x_0)}$,
\[
s_z^{p-2} = \frac{\dint_{\set{z > 1}} |\nabla z|^2\, dx}{\dint_{\set{z > 1}} (z - 1)^p\, dx} = \frac{\dint_{\set{u > 1}} |\nabla z|^2\, dx}{\dint_{\set{u > 1}} (z - 1)^p\, dx}.
\]
Since $z = u$ in $\set{u > 1} \setminus \D$, we have
\[
s_z^{p-2} \le
\frac{\dint_{\set{u > 1}} |\nabla u|^2\, dx + \dint_\D |\nabla z|^2\, dx}{\dint_{\set{u > 1}} (u - 1)^p\, dx - \dint_\D (u - 1)^p\, dx}
=
\frac{A_1 + \dint_\D |\nabla z|^2 \, dx}
{A_1 - \dint_\D (u-1)^p \, dx},
\]
where, since $u\in \M$,
\[
A_1  =  \int_{\set{u>1}} |\nabla u|^2 \, dx = \int_{\set{u>1}} (u-1)^p \, dx.
\]

%\begin{eqnarray*}
%s_z^{p-2} & = & \frac{\dint_{\set{u > 1} \setminus \D} |\nabla u|^2\, dx + \dint_\D |\nabla z|^2\, dx}{\dint_{\set{u > 1} \setminus \D} (u - 1)^p\, dx + \dint_\D (z - 1)^p\, dx}\\[10pt]
%& = & \frac{\dint_{\set{u > 1}} |\nabla u|^2\, dx - \dint_\D |\nabla u|^2\, dx + \dint_\D |\nabla z|^2\, dx}{\dint_{\set{u > 1}} (u - 1)^p\, dx - \dint_\D (u - 1)^p\, dx + \dint_\D (z - 1)^p\, dx}\\[10pt]
%& \le &
%\frac{\dint_{\set{u > 1}} |\nabla u|^2\, dx + \dint_\D |\nabla z|^2\, dx}{\dint_{\set{u > 1}} (u - 1)^p\, dx - \dint_\D (u - 1)^p\, dx}\\[10pt]
%& = & \frac{1 + \dfrac{\dint_\D |\nabla z|^2\, dx}{\dint_{\set{u > 1}} |\nabla u|^2\, dx}}{1 - \dfrac{\dint_\D (u - 1)^p\, dx}{\dint_{\set{u > 1}} (u - 1)^p\, dx}}
%\end{eqnarray*}

It follows as in \eqref{35} that $0 < u - 1 < 2Lr$ in $\D$, and $\vol{\D} = \O(r^N)$ as $r \to 0$. Therefore
\[
\int_\D (u - 1)^p\, dx = \O(r^{p+N}).
\]
It follows that
\begin{equation} \label{37}
s_z^{p-2} \le 1 + \frac1{A_1}
\dint_\D |\nabla z|^2\, dx + \O(r^{p+N}).
\end{equation}
We have
\begin{equation} \label{28}
\int_\D |\nabla z|^2\, dx = C^2\, (\alpha + r^p)^2\, r^N \int_{\set{y : x \in \D}} |\nabla \psi|^2\, dy.
\end{equation}
The right-hand side is $\O(r^N)$ since $0 < \alpha < 2L$ by \eqref{35}.
So \eqref{37} gives
\[
s_z^2 \le 1 + \frac{2}{(p - 2)A_1}\dint_\D |\nabla z|^2\, dx  + \O(r^{q+N}),
\]
where $q = \min \set{p,N} \ge 2$. Using this estimate in \eqref{36} now gives
\[
\frac{1}{r^N} \int_\D |\nabla z|^2\, dx + \O(r^q) \ge 2\, \vol{B_{1/3}(0)}.
\]
In view of  \eqref{28}, we find that there are
$r_0,\, c > 0$ such that $r \le r_0$
implies $\alpha \ge c$, which was our goal.
\end{proof}

Since the mountain pass solution of Theorem \ref{Theorem 1} satisfies
the hypotheses of Proposition \ref{Proposition 2}, we obtain
the first part of Theorem \ref{thm:nondegeneracy}.
The fact that this solution is a viscosity solution now
follows from results of Lederman and Wolanski.

We will define a weak viscosity solution is as follows.
\begin{definition}\label{def:viscosity}
We say that $u \in C(\Omega)$ satisfies the free boundary condition
\[
|\nabla u^+|^2 - |\nabla u^-|^2 = 2
\]
in the {\em weak viscosity sense} if
whenever there exist a point $x_0 \in \bdry{\set{u > 1}}$, a ball
$B \subset \set{u > 1}$, then either there are $\alpha_1>0$ and
$\alpha_2>0$ such that $\alpha_1^2 \le 2$ and $\alpha_2^2 \le 2$ and
\[
u(x) = 1 + \alpha_1 \langle x-x_0, \nu \rangle_+
+ \alpha_2 \langle x-x_0, \nu \rangle_-
+ o(|x-x_0|), \quad x\to x_0,
\]
with $\nu$ the interior normal to $\bdry B$ at $x_0$,
or else there are $\alpha>0$ and $\beta\ge 0$ such that $\alpha^2 - \beta^2 = 2$
and
\[
u(x) =  1 + \alpha \ip{x - x_0}{\nu}_+ - \beta\ip{x-x_0}{\nu}_- + \o(|x - x_0|),
\quad x\to x_0.
\]
Moreover, if the ball $B\subset \set{u\le 1}$, then the second asymptotic
formula (with $\alpha$ and $\beta$ as above, but with $\nu$ the exterior
normal to $\bdry B$ at $x_0$) holds.
\end{definition}

Denote
\[
f_j(x) = - (u_j(x) - 1)_+^{p-1}, \quad f(x) = - (u(x) - 1)_+^{p-1}, \quad x \in \Omega.
\]
Since $u_j$ converges uniformly to $u$, $f_j$ converges uniformly to $f$.
Therefore, $u_j$ solves an equation of the form \eqref{2} (denoted $E_\eps(f^\eps)$
in the paper of Lederman and Wolanski \cite{MR2281453}).
Since by Proposition \ref{Proposition 2}, $u$ is nondegenerate,
Corollaries 7.1 and 7.2 of \cite{MR2281453} imply that $u$ satisfies
the free boundary condition in the weak viscosity sense.  Furthermore,
Proposition \ref{Theorem 3}, proved below, shows that the case $u>1$ on both
sides of the free boundary (the case of positive $\alpha_1$ and
$\alpha_2$) is ruled out.  This concludes the proof of
Theorem \ref{thm:nondegeneracy}.

%Since $p > 2$, $f_j \to f$ uniformly on $\Omega$. Since, by Theorem \ref{Theorem 1},
%$u$ is nondegenerate, the corollary now follows from
%Corollary 7.1, Corollary 7.2, and Theorem 9.2 of

\section{The free boundary has finite Hausdorff measure}
\label{sec: Hausdorff}

In this section we prove Proposition \ref{Theorem 2}.
Let $u$ be a Lipschitz, nondegenerate solution to the interior
equation \eqref{eq:interior}.   The outline and most details
are the same as the proof of Theorem 3.4 of Caffarelli-Salsa
\cite{MR2145284}.   The only difference is that
$u-1$ solves an inhomogeneous equation $\Delta (u-1) = (u-1)_+^{p-1}$ in
$\set{u-1>0}$ rather than being harmonic.

\begin{lemma} \label{Lemma 12} (See (3.4) \cite{MR2145284})
There exists a constant $C > 0$ such that whenever $r\le \delta_0/2$,
with $\delta_0 = \mbox{dist} (\set{u>1}, \bdry\Omega)$,
and $\tau > 0$,
\[
\int_{B_r(x_0) \cap \set{|u - 1| < \tau}} |\nabla u|^2\, dx \le C \tau r^{N-1}.
\]
\end{lemma}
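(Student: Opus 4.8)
The plan is to mimic the Caffarelli--Salsa argument for the analogous bound in the harmonic case, making the necessary changes to absorb the inhomogeneous term $(u-1)_+^{p-1}$, which is bounded (by Lipschitz continuity of $u$, together with $\{u>1\}\subset\subset\Omega$) and hence contributes only lower-order terms. The basic idea is to test a suitable integral identity with a function that is comparable to the truncated excess $(\,|u-1|-\tau)_-$ times a cutoff, so that $|\nabla u|^2$ integrated over the strip $\{|u-1|<\tau\}$ is controlled by a boundary flux that scales like $\tau r^{N-1}$.

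First I would fix $x_0$ and $r\le\delta_0/2$, choose a cutoff $\eta\in C_0^\infty(B_{2r}(x_0))$ with $\eta\equiv1$ on $B_r(x_0)$, $0\le\eta\le1$, and $|\nabla\eta|\le C/r$, and work with the test function $\phi = \eta^2\,\varphi_\tau(u)$, where $\varphi_\tau$ is the Lipschitz function with $\varphi_\tau(1)=0$, $\varphi_\tau'(t)=1$ on $\{|t-1|<\tau\}$ and $\varphi_\tau'(t)=0$ otherwise (so $|\varphi_\tau|\le\tau$). Since $u$ is harmonic on $\interior{\set{u\le1}}$ and solves $-\Delta u=(u-1)_+^{p-1}$ on $\set{u>1}$, and (by the footnote fact) $|\nabla u|=0$ a.e.\ on $\set{u=1}$, testing the equation $-\Delta u = (u-1)_+^{p-1}\,\goodchi_{\set{u>1}}$ against $\phi$ and integrating by parts gives
\[
\int \eta^2\,\varphi_\tau'(u)\,|\nabla u|^2\,dx = -2\int \eta\,\varphi_\tau(u)\,\nabla\eta\cdot\nabla u\,dx + \int (u-1)_+^{p-1}\,\eta^2\,\varphi_\tau(u)\,dx.
\]
The left side is exactly $\int_{B_{2r}\cap\set{|u-1|<\tau}}\eta^2|\nabla u|^2\,dx$. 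On the right, the first term is bounded by $\tfrac12\int\eta^2\varphi_\tau'(u)|\nabla u|^2 + C\int_{\set{|u-1|<\tau}}|\varphi_\tau(u)|^2|\nabla\eta|^2$ via Cauchy--Schwarz; the second strip integral is $\le C\tau^2 r^{-2}|B_{2r}\cap\set{|u-1|<\tau}| \le C\tau^2 r^{N-2}$. Absorbing the gradient term and using that $(u-1)_+^{p-1}\le A_0$ and $|\varphi_\tau|\le\tau$, the last term is $\le C\tau r^N \le C\tau r^{N-1}$ (using $r\le\delta_0/2$). Hence $\int_{B_r(x_0)\cap\set{|u-1|<\tau}}|\nabla u|^2\,dx \le C\tau^2 r^{N-2} + C\tau r^{N-1}$, and I still need to beat down the first term.

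The main obstacle is precisely that $\tau^2 r^{N-2}$ term: to get the sharp bound $C\tau r^{N-1}$ one must exploit nondegeneracy (and the structure of $u$ near the free boundary) rather than crude estimates, exactly as in Caffarelli--Salsa. The standard fix is to only need the inequality for $\tau$ comparable to $r$ times a constant, or to run the estimate on a scale where $\tau\lesssim r$: when $\tau\le C_0 r$ the bad term $\tau^2 r^{N-2}\le C_0\tau r^{N-1}$ is already of the desired form, and for larger $\tau$ one covers $\{|u-1|<\tau\}\cap B_r$ by boundedly many balls of radius $\sim\tau$ (on which the Lipschitz bound forces the strip to have the right proportions) and sums. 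Alternatively one follows the Caffarelli--Salsa iteration verbatim: the only place their proof uses harmonicity is in an integration by parts, and the extra term produced is $\int(u-1)_+^{p-1}\varphi_\tau(u)\eta^2$, which as computed above is $O(\tau r^N)$, absorbed into $C\tau r^{N-1}$. I would therefore present the argument as "the proof of (3.4) in \cite{MR2145284} applies, with the single modification that the inhomogeneous term contributes $\int_{B_r(x_0)}(u-1)_+^{p-1}|\varphi_\tau(u)|\,dx = O(\tau r^N)$, which is lower order," and carry out the integration-by-parts identity above in detail so the reader can check that the inhomogeneity is harmless.
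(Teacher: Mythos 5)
Your underlying idea---test a truncation of $u-1$ against the equation so that $|\nabla u|^2$ over the strip $\{|u-1|<\tau\}$ is controlled by lower-order flux terms---is the same as the paper's, but the execution goes wrong in the absorption step. The cross term $-2\int\eta\,\varphi_\tau(u)\,\nabla\eta\cdot\nabla u\,dx$ lives on the annulus where $\nabla\eta\ne 0$, and there $\varphi_\tau(u)$ may equal $\pm\tau$ while $\varphi_\tau'(u)=0$; there is therefore nothing on the left-hand side to absorb into, and the restriction of the second Young term to $\{|u-1|<\tau\}$ is unjustified (that integral really extends over the whole annulus). The spurious $\tau^2 r^{N-2}$ term is an artifact of this incorrect Cauchy--Schwarz, and the closing paragraph does not repair it: when $\tau>r$ the proposal to ``cover by boundedly many balls of radius $\sim\tau$'' is not meaningful, and while that regime is in fact trivial for $x_0\in\partial\set{u>1}$ (since $|u-1|\le Lr$ on $B_r(x_0)$), you never actually close the argument.

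The fix is simply not to use Young's inequality at all. Since $u$ is globally Lipschitz with constant $L$ and $|\varphi_\tau|\le\tau$, the cross term is bounded directly by
\[
2\tau L\int |\nabla\eta|\,dx \le C\tau\cdot\frac{1}{r}\cdot r^N = C\tau r^{N-1},
\]
valid for every $\tau>0$, so the $\tau^2 r^{N-2}$ term and the case analysis never appear. This is exactly what the paper does, in a form that avoids the cutoff altogether: it integrates $-u_\eps^\tau\Delta u=(u-1)^{p-1}u_\eps^\tau$ over $B_r(x_0)$ with the one-sided truncation $u_\eps^\tau=\min\set{(u-1-\eps)_+,\tau-\eps}$, and the analogous boundary flux $\int_{\partial B_r}(\partial u/\partial n)\,u_\eps^\tau\,d\sigma$ is bounded by $L\tau\,\sigma(\partial B_r)=O(\tau r^{N-1})$; the two sides $u>1$ and $u<1$ are then treated separately and the set $\set{u=1}$ is discarded because $\nabla u=0$ a.e.\ there.

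One further point you gloss over: writing $-\Delta u = (u-1)_+^{p-1}\goodchi_{\set{u>1}}$ and testing across $\partial\set{u>1}$ requires care, since $\Delta u$ as a distribution on all of $\Omega$ generically carries a surface measure on the free boundary (the jump in $\partial_\nu u$). Your test function $\eta^2\varphi_\tau(u)$ does vanish on $\set{u=1}\supset\partial\set{u>1}$, so this singular part contributes nothing, but that deserves to be said explicitly; the paper's choice of $u_\eps^\tau$, which vanishes on the whole neighborhood $\set{u\le 1+\eps}$ before letting $\eps\searrow 0$, sidesteps the issue entirely.
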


\begin{proof}
Denote by $L$ the Lipschitz constant of $u$ on $\bar \Omega$ and by
$M$ the maximum of $u$ over $\bar \Omega$.  (This estimate
does not depend on nondegeneracy.)

For $0 < \eps < \tau$, let
$u_\eps^\tau = \min \set{(u - 1 -   \eps)_+,\tau - \eps}$.
Since $- \Delta u = (u - 1)^{p-1}$ in $\set{u
  > 1}$
and $u_\eps^\tau = 0$ in $\set{u \le 1 + \eps}$, we have
$-u_\eps^\tau\, \Delta u = (u - 1)^{p-1}\, u_\eps^\tau$ in
$\Omega$. Integrating this equation over $B_r(x_0)$ gives
\[
\int_{B_r(x_0)} \nabla u \cdot \nabla u_\eps^\tau\, dx
= \int_{\bdry{B_r(x_0)}} \frac{\partial u}{\partial n}\, u_\eps^\tau\, d\sigma
+ \int_{B_r(x_0)} (u - 1)^{p-1}\, u_\eps^\tau\, dx,
\]
where $n$ is the outward unit normal to $\bdry{B_r(x_0)}$. Since
$u_\eps^\tau = u - 1 - \eps$ in $\set{1 + \eps < u < 1 + \tau}$
and $u_\eps^\tau$ is constant outside this set,
\[
\int_{B_r(x_0)} \nabla u \cdot \nabla u_\eps^\tau\, dx
= \int_{B_r(x_0) \cap \set{1 + \eps < u < 1 + \tau}} |\nabla u|^2\, dx
\to \int_{B_r(x_0) \cap \set{1 < u < 1 + \tau}} |\nabla u|^2\, dx
\]
as $\eps \searrow 0$. We also have
\[
\abs{\int_{\bdry{B_r(x_0)}} \frac{\partial u}{\partial n}\, u_\eps^\tau\, d\sigma}
\le L\tau \sigma(\partial B_r) \le c_N \tau L r^{N-1},
\]
and
\[
\int_{B_r(x_0)} (u - 1)^{p-1}\, u_\eps^\tau\, dx \le
c_N \tau M^{p-1} r^N.
\]
So for a constant $C$ depending only on $L$, $M$ and the diameter of $\Omega$,
\[
\int_{B_r(x_0) \cap \set{1 < u < 1 + \tau}} |\nabla u|^2\, dx \le
C \tau r^{N-1}.
\]
Since $u$ is harmonic in $\set{u < 1}$, a similar argument gives
the same bound for the integral over $\set{1-\tau u < 1}$.
The conclusion follows since $\nabla u = 0$ a.e.\! on the set $\set{u = 1}$.
\end{proof}

\begin{lemma} \label{Lemma 9} (See Lemma 1.10 of \cite{MR2145284})
There exist constants $r_0,\, \lambda > 0$ such that whenever $x_0 \in \set{u > 1}$ and $r := \dist{x_0}{\set{u \le 1}} \le r_0$, there is a point $x_1 \in \bdry{B_r(x_0)}$ satisfying
\[
u(x_1) \ge 1 + (1 + \lambda)\, (u(x_0) - 1).
\]
\end{lemma}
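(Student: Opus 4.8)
The plan is to adapt the classical comparison-with-harmonic-functions argument, using nondegeneracy (Definition \ref{def:nondegenerate}) as the single quantitative input. Fix $x_0 \in \set{u > 1}$ and let $r = \dist{x_0}{\set{u \le 1}}$, with $r \le r_0$ for an $r_0$ to be chosen below (in particular $r_0$ no larger than the constant in Definition \ref{def:nondegenerate}). Since $r$ is the distance to $\set{u \le 1}$, the open ball $B_r(x_0)$ lies in $\set{u > 1}$, while some $x^\ast \in \bdry{B_r(x_0)}$ lies in $\set{u \le 1}$, so $u(x^\ast) = 1$ by continuity. Put $v = u - 1$, so $v > 0$ in $B_r(x_0)$, $v(x^\ast) = 0$, and $-\Delta v = v^{p-1}$ in $B_r(x_0)$ by \eqref{eq:interior}. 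If $L$ is the Lipschitz constant of $u$, then $0 \le v(x) \le L\abs{x - x^\ast} \le 2Lr$ on $\closure{B_r(x_0)}$, so $v$ vanishes at $x^\ast$ at a controlled linear rate along the sphere; and nondegeneracy gives $v(x_0) = u(x_0) - 1 \ge cr$.

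Next I would carry out a harmonic replacement. Let $g$ solve $-\Delta g = v^{p-1}$ in $B_r(x_0)$ with $g = 0$ on $\bdry{B_r(x_0)}$; by $v \le 2Lr$ and comparison with the explicit solution on the ball, $0 \le g \le C\, r^{p+1}$ with $C = C(N,p,L)$. Then $h = v - g$ is harmonic in $B_r(x_0)$ and equals $v$ on $\bdry{B_r(x_0)}$, so the mean value property gives
\[
h(x_0) = \frac{1}{\sigma(\bdry{B_r(x_0)})}\int_{\bdry{B_r(x_0)}} v\, d\sigma .
\]
Since $p > 2$, the correction $g$ is of order strictly higher than $r$, so after shrinking $r_0$ we have $g(x_0) \le \tfrac12 v(x_0)$ and hence $h(x_0) \ge \tfrac12 v(x_0) \ge \tfrac{c}{2} r > 0$.

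Now let $M = \max_{\bdry{B_r(x_0)}} v$, attained at some $x_1 \in \bdry{B_r(x_0)}$. From the previous paragraph $\tfrac{c}{2} r \le h(x_0) \le M \le 2Lr$, so $M/r$ lies between two positive universal constants. On $\bdry{B_r(x_0)}$ we have $v \le \min\set{M,\, L\abs{x - x^\ast}}$, so on the cap $C = \set{x \in \bdry{B_r(x_0)} : \abs{x - x^\ast} < M/(2L)}$ --- a proper cap, as $M/(2L) \le r$ --- we have $v \le M/2$. Since $\sigma(C) \ge c_N\, (M/(2L))^{N-1}$ with $c_N$ universal, the lower bound on $M/r$ gives $\sigma(C) \ge \kappa\, \sigma(\bdry{B_r(x_0)})$ for a universal $\kappa > 0$, and therefore
\[
h(x_0) = \frac{1}{\sigma(\bdry{B_r(x_0)})}\int_{\bdry{B_r(x_0)}} v\, d\sigma \le \Big(1 - \tfrac{\kappa}{2}\Big)\, M .
\]
Thus $M \ge (1 + \mu)\, h(x_0)$ for a universal $\mu > 0$; combined with $h(x_0) \ge v(x_0) - C\, r^{p+1}$ and $v(x_0) \ge cr$, and shrinking $r_0$ once more to absorb the higher-order term, this gives $M \ge (1 + \lambda)\, v(x_0)$ with $\lambda := \mu/2$. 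As $v(x_1) = M = u(x_1) - 1$ and $v(x_0) = u(x_0) - 1$, this is the claimed estimate.

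The only delicate point --- the main obstacle --- is to keep $\lambda$ from degenerating as $r \to 0$. Nondegeneracy is precisely what rules this out: it pins $v(x_0)$, and hence (after the higher-order harmonic correction, which is small because $p > 2$) the boundary maximum $M$, to be comparable to $r$, which forces the linear dip of $v$ at $x^\ast$ to cost a fixed fraction of the spherical mean, turning the mean value identity into a genuine improvement. One could alternatively argue by blow-up: were the lemma to fail, suitable rescalings of $v$ would be uniformly Lipschitz and bounded (again by nondegeneracy), would solve equations whose right-hand side tends to $0$, and would converge uniformly on the closed unit ball to a harmonic function equal to $1$ at the center and $\le 1$ throughout --- hence identically $1$, contradicting that each rescaling vanishes at a boundary point.
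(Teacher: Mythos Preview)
Your argument is correct. The paper, however, takes the blow-up route you mention only in your last paragraph: it argues by contradiction, rescales $v_j(y)=r_j^{-1}(u(x_j+r_jy)-1)$, uses the uniform Lipschitz bound (from $u(x_j')=1$) and nondegeneracy $v_j(0)\ge c$ to extract a limit $v$ on $\closure{B_1(0)}$, passes $-\Delta v_j=r_j^p v_j^{p-1}$ to $\Delta v=0$, and then obtains a contradiction from the maximum principle since $\max_{\bdry B_1}v\le v(0)$ while $v(0)\ge c>0=v(y_0)$.

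Your main proof is instead direct and quantitative: harmonic replacement converts the problem into a mean-value identity for $h$, and the Lipschitz dip of $v$ near the boundary zero $x^\ast$ removes a fixed fraction of the spherical average, so $M\ge(1+\mu)h(x_0)$; nondegeneracy pins $M\approx r$, which makes the cap of fixed \emph{relative} size, and the correction $g=O(r^{p+1})$ is absorbed using $v(x_0)\ge cr$. This gives an explicit $\lambda$ in terms of $N$, $L$, and the nondegeneracy constant, whereas the paper's compactness argument is shorter but nonconstructive. Two small remarks: the higher-order absorption only needs $p>0$, not $p>2$; and in step 11 it is cleaner to write $M\ge(1+\mu)(v(x_0)-Cr^{p+1})$ and then use $v(x_0)\ge cr$ to turn the error into a loss of at most $\mu/2$ for $r\le r_0$ small, as you do.
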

\begin{proof}
Suppose not. Then there are sequences $\lambda_j \searrow 0$ and $x_j \in \set{u > 1}$ with $r_j := \dist{x_j}{\set{u \le 1}} \to 0$ such that
\[
\max_{x \in \bdry{B_{r_j}(x_j)}}\, u(x) < 1 + (1 + \lambda_j)\, (u(x_j) - 1).
\]
Since $u$ is nondegenerate, we may assume that $u(x_j) \ge 1 + c r_j$ for some constant $c > 0$. Noting that $B_{r_j}(x_j) \subset \set{u > 1}$ and $\exists\, x_j' \in \bdry{B_{r_j}(x_j)}$ such that $u(x_j') = 1$, set
\[
v_j(y) = \frac{1}{r_j}\, (u(x_j + r_j y) - 1), \quad y_j = \frac{1}{r_j}\, (x_j' - x_j).
\]
Then $v_j \in C(\closure{B_1(0)}) \cap C^2(B_1(0))$ satisfies
\begin{gather}
\label{38} - \Delta v_j = r_j^p\, v_j^{p-1} \quad \text{in } B_1(0),\\[7.5pt]
\label{39} \max_{y \in \bdry{B_1(0)}}\, v_j(y) < (1 + \lambda_j)\, v_j(0),\\[7.5pt]
\label{40} v_j(0) \ge c, \quad v_j(y_j) = 0.
\end{gather}

We have
\begin{gather*}
0 \le v_j(y)
= \frac{1}{r_j}\, (u(x_j + r_j y) - u(x_j'))
\le \frac{L}{r_j}\, |x_j - x_j' + r_j y| \le 2L
\quad \forall y \in \closure{B_1(0)},\\[7.5pt]
|v_j(y) - v_j(z)| = \frac{1}{r_j}\, |u(x_j + r_j y) - u(x_j + r_j z)|
\le L\, |y - z| \quad \forall y, z \in \closure{B_1(0)},\\[7.5pt]
\int_{B_1(0)} |\nabla v_j(y)|^2\, dy
= r^{-N} \int_{B_{r_j}(x_j)} |\nabla u(x)|^2\, dx \le N \alpha_N L^2,
\end{gather*}
so, for suitable subsequences, $v_j$ converges weakly in $H^1(B_1(0))$ and
uniformly on $\closure{B_1(0)}$ to some Lipschitz continuous function $v$,
and $y_j$ converges to some point $y_0 \in \bdry{B_1(0)}$.
For any $\varphi \in C^\infty_0(B_1(0))$, testing \eqref{38} with $\varphi$ gives
\[
\int_{B_1(0)} \nabla v_j \cdot \nabla \varphi\, dx
= r_j^p \int_{B_1(0)} v_j^{p-1}\, \varphi\, dx,
\]
and passing to the limit as $r_j\to 0$ gives
\[
\int_{B_1(0)} \nabla v \cdot \nabla \varphi\, dx = 0.
\]
So $v$ is harmonic in $B_1(0)$. By \eqref{39},
\[
\max_{y \in \bdry{B_1(0)}}\, v(y) \le v(0),
\]
and hence $v$ is constant by the maximum principle. On the other hand,
\[
v(0) \ge c > 0 = v(y_0)
\]
by \eqref{40}, which is impossible when $v$ is constant.
\end{proof}

The rest of the proof follows \cite{MR2145284} with no change.
From the preceding lemma, a chaining argument carried
out in Theorem 1.9 and Lemma 3.3 of \cite{MR2145284}
gives the following:
\begin{lemma} \label{Lemma 10}
There exist constants $0 < r_0 \le \delta_0$ and $\gamma > 0$ such that whenever $x_0 \in \bdry{\set{u > 1}}$ and $0 < r \le r_0$, there is a point $x \in B_r(x_0) \setminus B_{r/2}(x_0)$ satisfying $u(x) \ge 1 + \gamma r$, in particular,
\[
\sup_{x \in B_r(x_0)}\, u(x) \ge 1 + \gamma r.
\]
\end{lemma}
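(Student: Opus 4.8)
The plan is to derive Lemma \ref{Lemma 10} from Lemma \ref{Lemma 9} by a chaining (iteration) argument, exactly as in Theorem 1.9 and Lemma 3.3 of \cite{MR2145284}. Fix $x_0 \in \bdry{\set{u>1}}$ and $0 < r \le r_0$. Since $x_0$ is on the free boundary, there are points in $\set{u>1}$ arbitrarily close to $x_0$; pick a point $y_1 \in \set{u>1}\cap B_{r/4}(x_0)$ very close to $x_0$, so that $d_1 := \dist{y_1}{\set{u \le 1}}$ is small, in particular $\le r_0$. The idea is to produce a sequence $y_1, y_2, y_3, \dots$ with $u(y_{k+1}) - 1 \ge (1+\lambda)(u(y_k)-1)$ and $|y_{k+1}-y_k| = d_k := \dist{y_k}{\set{u\le 1}}$, by applying Lemma \ref{Lemma 9} repeatedly: at each step $y_{k+1}$ is the point on $\bdry{B_{d_k}(y_k)}$ furnished by that lemma. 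Since $u(y_k) - 1$ grows geometrically while each $y_k$ stays (at least for a while) in a fixed ball around $x_0$, the process cannot continue forever: either some $y_k$ exits $B_r(x_0)$, or the Lipschitz bound on $u$ forces a contradiction.

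The quantitative bookkeeping is the substance of the argument. From nondegeneracy (Definition \ref{def:nondegenerate}), $u(y_k) - 1 \ge c\, d_k$, while the Lipschitz bound gives $u(y_k) - 1 \le L\, \dist{y_k}{\bdry{\set{u>1}}} \le L(d_k + \text{something})$; more to the point, the geometric growth $u(y_k) - 1 \ge (1+\lambda)^{k-1}(u(y_1)-1)$ combined with $u(y_k)-1 \le L\cdot(\text{diam of the region reached})$ caps the number of steps that can be taken before leaving $B_{r/2}(x_0)$. Meanwhile the step sizes satisfy $d_k \le (u(y_k)-1)/c$, and one checks (this is where the geometric growth of $u(y_k)-1$ is used in reverse, bounding $d_{k}$ below by a geometric progression as well, via $d_{k+1} \ge \text{const}\cdot d_k$ or similar) that $\sum_k d_k$ does not overshoot: the $y_k$ stay inside $B_{r/2}(x_0)$ up until the last step. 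At the first index $m$ where $y_{m+1} \notin B_{r/2}(x_0)$ — which must occur, and in fact one wants $y_m \in B_{r/2}(x_0)$ but $|y_{m+1} - y_m| = d_m$ large enough that $y_{m+1}$, or the midpoint of the $m$-th jump, lies in $B_r(x_0) \setminus B_{r/2}(x_0)$ — one reads off $u(\text{that point}) - 1 \ge c\, d_m \ge \gamma r$, since the jump length $d_m$ must itself be comparable to $r$ (the jump crossed from inside $B_{r/2}$ to outside it, hence $d_m \ge$ the gap, which after the chaining normalization is $\gtrsim r$). This produces the asserted point $x \in B_r(x_0)\setminus B_{r/2}(x_0)$ with $u(x) \ge 1 + \gamma r$, and the displayed consequence $\sup_{B_r(x_0)} u \ge 1 + \gamma r$ is then immediate.

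Two routine reductions complete the setup. First, $r_0$ must be chosen small enough (relative to $\delta_0$ and the $r_0$ of Lemma \ref{Lemma 9}) that every ball $B_{d_k}(y_k)$ appearing in the chain lies inside $\set{u>1}$ where needed and has $d_k \le r_0$ so that Lemma \ref{Lemma 9} applies; since the $d_k$ start small and the $y_k$ remain in $B_r(x_0)$, this is guaranteed by taking $r_0$ a fixed small multiple of $\delta_0$. Second, the constant $\lambda$ from Lemma \ref{Lemma 9} determines $\gamma$ explicitly through the geometric-series computation; no further structure of $u$ is needed beyond the Lipschitz bound, the interior equation (only to invoke the earlier lemmas), and nondegeneracy.

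**The main obstacle** is the chaining bookkeeping: one must verify that the partial sums $\sum_{k=1}^{m} d_k$ of the jump lengths stay within $r/2$ for all $k \le m-1$ while forcing the last jump to reach outside $B_{r/2}(x_0)$, and simultaneously that $u(y_k)-1$ has grown to order $r$ by that stage — i.e., that the geometric growth rate $1+\lambda$ and the nondegeneracy/Lipschitz constants $c, L$ are mutually consistent. This is exactly the delicate step already executed in Theorem 1.9 and Lemma 3.3 of Caffarelli–Salsa \cite{MR2145284}, and since $u-1$ solving the inhomogeneous equation $\Delta(u-1) = (u-1)_+^{p-1}$ enters only through Lemmas \ref{Lemma 12} and \ref{Lemma 9} (already proved above), that argument transfers verbatim.
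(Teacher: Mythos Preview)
Your proposal is correct and follows essentially the same route as the paper: the paper itself gives no detailed proof of Lemma \ref{Lemma 10} but simply states that it follows from Lemma \ref{Lemma 9} by the chaining argument of Theorem 1.9 and Lemma 3.3 in Caffarelli--Salsa \cite{MR2145284}, which is exactly what you outline. The only minor imprecision is in your description of the exit step: one does not directly bound $d_m$ below by $r$, but rather shows (by the triangle inequality and the telescoping estimate $u(y_k)-1 \ge \lambda c\sum_{j<k}|y_{j+1}-y_j| \ge \lambda c\,|y_k-y_1|$) that the first $y_k$ outside $B_{r/2}(x_0)$ already satisfies $u(y_k)-1 \gtrsim r$ and lies in $B_r(x_0)$.
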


Next, at the beginning of the proof of Theorem 3.4 \cite{MR2145284},
the following lemma is deduced from Lemma \ref{Lemma 10}:
\begin{lemma} \label{Lemma 11}
There exist constants $0 < r_0 \le \delta_0$ and $\kappa > 0$ such that whenever $x_0 \in \bdry{\set{u > 1}}$ and $0 < r \le r_0$,
\[
\int_{B_r(x_0)} |\nabla u|^2\, dx \ge \kappa r^N.
\]
\end{lemma}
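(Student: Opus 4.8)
The goal is to deduce Lemma \ref{Lemma 11} from Lemma \ref{Lemma 10}, following the start of the proof of Theorem 3.4 in \cite{MR2145284}. The idea is that Lemma \ref{Lemma 10} produces a point $x \in B_r(x_0) \setminus B_{r/2}(x_0)$ with $u(x) \ge 1 + \gamma r$; near such a point $u - 1$ is bounded below by a definite fraction of $r$ on a ball of radius comparable to $r$, and on that ball the Dirichlet energy of $u$ is bounded below by $\kappa' r^N$ for a suitable $\kappa' > 0$. Since that ball can be arranged to lie inside $B_r(x_0)$ (after possibly shrinking $r_0$ and the constants), the lemma follows.

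First I would fix $x_0 \in \bdry{\set{u > 1}}$ and $0 < r \le r_0$ (with $r_0 \le \delta_0$ from Lemma \ref{Lemma 10}), and let $x \in B_r(x_0) \setminus B_{r/2}(x_0)$ with $u(x) \ge 1 + \gamma r$. Since $u$ is Lipschitz with constant $L$ on $\bar\Omega$, for $y \in B_\rho(x)$ with $\rho = \gamma r / (2L)$ we get $u(y) \ge 1 + \gamma r - L\rho = 1 + \gamma r/2 > 1$, so $B_\rho(x) \subset \set{u > 1}$ and $\rho$ is comparable to $r$. Note that $B_\rho(x) \subset B_{2r}(x_0)$ automatically; to keep the ball inside $B_r(x_0)$ one replaces $r$ by $r/4$ in the statement of Lemma \ref{Lemma 10} (applied at scale $r/4 \le r_0$), so that the produced point $x$ lies in $B_{r/4}(x_0)$ and $B_\rho(x) \subset B_{r/2}(x_0) \subset B_r(x_0)$ once $\rho \le r/4$, which holds after shrinking the constant $\gamma$. (Relabeling constants, I will not belabor this bookkeeping.)

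Next I would lower-bound $\int_{B_\rho(x)} |\nabla u|^2\, dx$. On $B_\rho(x)$ the function $w := u - 1$ is positive and satisfies $-\Delta w = w^{p-1}$ with $0 \le w \le 2Lr =: M'$ (by the Lipschitz bound and $u(x) \le 1 + 2Lr$ on this ball — more precisely $w$ is bounded by a constant times $r$). Decompose $w = g + h$ on $B_\rho(x)$, where $h$ is harmonic with $h = w$ on $\bdry{B_\rho(x)}$ and $g = 0$ on $\bdry{B_\rho(x)}$, so $-\Delta g = w^{p-1}$; then $\norm[\infty]{g} = \O(\rho^2 M'^{p-1}) = \O(r^{p+1})$, which is negligible compared to $r$ as $r \to 0$. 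Thus $h(x) = w(x) - g(x) \ge \gamma r/2 - \O(r^{p+1}) \ge \gamma r/4$ for $r$ small. Since $h$ is harmonic and equals $w \ge 0$ on the boundary, $h \ge 0$, and by the mean value property $\fint_{\bdry{B_\rho(x)}} h\, d\sigma = h(x) \ge \gamma r/4$. A standard computation (essentially Poincaré on the ball, or directly: $\int_{B_\rho} |\nabla h|^2 = \int_{\bdry B_\rho} h\, \partial_n h$ together with Harnack, or simpler, compare with the harmonic function having the same boundary average) gives $\int_{B_\rho(x)} |\nabla h|^2\, dx \ge c_N (\text{osc or mean deviation of } h)^2 \rho^{N-2}$; in fact the cleanest route is: $h$ harmonic, $h(x) \ge \gamma r/4 > 0$, and $h$ vanishes somewhere near $\bdry B_\rho(x)$ is \emph{not} guaranteed, so instead I use that on $\bdry B_\rho(x)$ the value $w$ vanishes at the point $x_0 + \rho \cdot$ (direction toward $\set{u=1}$) — more carefully, since $x \notin \set{u=1}$ but $\dist{x}{\set{u\le 1}}$ is comparable to $\dist{x_0}{\set{u\le 1}} \le r$, there is a point in $\closure{B_\rho(x)}$ where $w = 0$, so $\min_{\closure{B_\rho(x)}} w = 0 < \gamma r/4 \le h(x)$, forcing $\int_{B_\rho(x)} |\nabla h|^2 \ge c_N r^N / \rho^2 \cdot \rho^N = c_N' r^N$ by a rescaled Poincaré-type inequality for harmonic functions vanishing somewhere on a comparable ball. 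Finally $\int |\nabla u|^2 = \int |\nabla w|^2 \ge \tfrac12 \int |\nabla h|^2 - \int |\nabla g|^2$, and $\int_{B_\rho(x)} |\nabla g|^2 \le \norm[\infty]{g} \int_{B_\rho} w^{p-1} = \O(r^{p+1} \cdot r^{p-1} \cdot r^N) = \o(r^N)$, so for $r$ small enough $\int_{B_r(x_0)} |\nabla u|^2\, dx \ge \int_{B_\rho(x)} |\nabla u|^2\, dx \ge \kappa r^N$.

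The main obstacle is making the lower bound $\int_{B_\rho(x)} |\nabla h|^2 \gtrsim r^N$ precise and clean. The inhomogeneous term is harmless (it contributes only $\o(r^N)$ because $p > 1$ makes $\rho^2 w^{p-1}$ lower order), so the real content is the elementary fact that a nonnegative harmonic function on a ball of radius $\rho$ which attains the value $\gtrsim \rho$ at the center but also attains $0$ on the closed ball must have Dirichlet energy $\gtrsim \rho^N$ — this is a scaling-invariant Poincaré/Cacciopoli-type statement, and I expect it is exactly the computation carried out at the start of the proof of Theorem 3.4 in \cite{MR2145284}, so I would cite it there and fill in the minor modification due to the inhomogeneity. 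Since the excerpt explicitly says this lemma ``is deduced from Lemma \ref{Lemma 10}'' in \cite{MR2145284}, the proof should be short, and the only genuinely new point is checking that $w^{p-1}$ with $p > 2$ contributes a negligible error, which follows from the Lipschitz bound $w = \O(r)$ on the relevant ball.
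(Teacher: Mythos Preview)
Your argument has a genuine gap at the step where you claim ``there is a point in $\closure{B_\rho(x)}$ where $w = 0$.'' This is incompatible with your own construction. You chose $\rho = \gamma r/(2L)$ precisely so that the Lipschitz bound gives
\[
u(y) \ge u(x) - L|y - x| \ge 1 + \gamma r - L\rho = 1 + \tfrac{\gamma r}{2}
\quad \text{for all } y \in \closure{B_\rho(x)}.
\]
Thus $w = u - 1 \ge \gamma r/2$ on the \emph{entire} closed ball, and in particular $w$ does not vanish there. Consequently the harmonic function $h$ (which equals $w$ on $\bdry{B_\rho(x)}$) satisfies $h \ge \gamma r/2$ everywhere by the minimum principle, and nothing prevents $h$ from being nearly constant, in which case $\int_{B_\rho(x)} |\nabla h|^2$ can be arbitrarily small. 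Your lower bound on the Dirichlet energy therefore collapses. The underlying tension is that you want $B_\rho(x) \subset \set{u > 1}$ so that the clean equation $-\Delta w = w^{p-1}$ holds, but you also want the ball to reach the level set $\set{u = 1}$ to force oscillation; these two requirements are mutually exclusive.

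The argument the paper has in mind (the one from the beginning of the proof of Theorem 3.4 in \cite{MR2145284}) avoids this difficulty by not trying to stay inside $\set{u > 1}$ at all. One takes the point $x_1 \in B_{r/2}(x_0) \setminus B_{r/4}(x_0)$ from Lemma \ref{Lemma 10} with $u(x_1) \ge 1 + \gamma r/2$, and builds a solid cylinder $C \subset B_r(x_0)$ with axis the segment $[x_0,x_1]$ and cross-sectional radius $a \sim r$. On the disk $D_0$ through $x_0$ one has $u \le 1 + La$, while on the parallel disk $D_1$ through $x_1$ one has $u \ge 1 + \gamma r/2 - La$; choosing $a$ a small multiple of $r$ makes the gap at least $\gamma r/4$. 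For each line segment parallel to $x_1 - x_0$ joining $D_0$ to $D_1$, the fundamental theorem of calculus and Cauchy--Schwarz give
\[
\int_0^1 |\nabla u|^2\, |x_1 - x_0|\, dt \ge \frac{(\gamma r/4)^2}{|x_1 - x_0|} \gtrsim r,
\]
and integrating over the $(N-1)$-dimensional cross-section (area $\sim r^{N-1}$) yields $\int_{C} |\nabla u|^2 \gtrsim r^N$. This is entirely elementary, uses no equation on $C$, and is the approach you should follow.
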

The rest of the proof of Proposition \ref{Theorem 2} is a covering
argument, exactly as in Theorem 3.4 of \cite{MR2145284}.

\section{Nondegeneracy of the non-plasma phase $\set{u\le 1}$}

Now we turn to the proof of Proposition \ref{Theorem 3},
which says that not only $\set{u>1}$ but also
$\set{u\le 1}$ has significant measure
near each topological boundary point.
The measure-theoretic boundary $\partial_* E$ of a measurable set $E\subset \R^N$
is defined as the set of $x\in \R^N$ such that for all $r>0$,
\[
\L(E \cap B_r(x)) >0 \quad \mbox{and} \quad \L(E^c \cap B_r(x)) >0.
\]
Evidently, the measure-theoretic boundary $\partial_* E$
is a subset of the topological
boundary $\partial E$.  The proposition is
a quantitative, scale-invariant estimate showing
that the topological boundary is contained in the measure-theoretic
boundary.

\begin{lemma} \label{Lemma 13}
Let $B= B_1(0)$ be the unit ball in $R^N$.  Let $h\in C(\bar B)$
be a harmonic function in the ball and such that
\[
h(0)>0; \quad |h(x) - h(y)| \le L|x-y|, \quad x, \, y \in \partial B.
\]
Define
\[
\eps = \sigma(\bdry{B} \cap \set{h\le 0}).
\]
There exists a  constant $C > 0$ depending on dimension and $L$
 such that
\[
\int_{\set{h \le 0}} |\nabla h|^2\, dx \le C \left[\frac{\eps}{h(0)}\right]^{1/N}.
\]
%for any harmonic function $h \in C^2(B) \cap C(\closure{B})$ satisfying
%\begin{enumroman}
%\item \label{ii} $h(0) > 0$,
%\item \label{i} $\sigma(\bdry{B} \cap \set{h \le 0}) < \eps < N \alpha_N\, h(0)/2^N$,
%\item \label{iii} $h(y) \ge -1 \quad \forall y \in \bdry{B}$,
%\item \label{iv} $|h(y) - h(z)| \le |y - z| \quad \forall y, z \in \bdry{B}$.
%\end{enumroman}
\end{lemma}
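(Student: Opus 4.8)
The plan is to control $\int_{\set{h\le0}}|\nabla h|^2\,dx$ by comparing $h$ on the "bad" part of the sphere $\bdry B\cap\set{h\le0}$ with the harmonic function itself, exploiting that $h(0)>0$ forces the bad set to be small and that $h$ is Lipschitz on $\bdry B$. First I would record the elementary energy identity: since $h$ is harmonic, $\int_B|\nabla h|^2\,dx = \int_{\bdry B} h\,\partial_n h\,d\sigma$, and more usefully $\int_{\set{h\le0}}|\nabla h|^2\,dx = -\int_{\bdry B\cap\set{h\le0}} h\,\partial_n h\,d\sigma + \int_B \nabla h\cdot\nabla(h_-)\,dx$-type manipulations; concretely, testing $\Delta h=0$ against $h_- = \max\set{-h,0}$ (which vanishes on $\bdry B$ outside the bad set) gives
\[
\int_{\set{h\le0}}|\nabla h|^2\,dx = -\int_{\bdry B\cap\set{h\le0}} h\,\frac{\partial h}{\partial n}\,d\sigma .
\]
On the bad set $|h|\le L\,\diam(\bdry B\cap\set{h\le0})\le C\eps^{1/(N-1)}$ crudely, or better $|h|\le L\,\mathrm{dist}$ to the zero set on the sphere; combined with $|\nabla h|\le C\sup_{\bdry B}|h|\le CL$ near that part of the sphere by interior gradient estimates away from... — but $\partial_n h$ on $\bdry B$ is exactly the boundary flux, not interior, so I need a gradient bound up to the boundary.

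The cleaner route: rescale so only the bad set matters. The key step is to use the Lipschitz bound on $\bdry B$ to get a pointwise bound on $h_-$ in a neighborhood of the bad set inside $B$, then apply Caccioppoli. Since $h$ is Lipschitz with constant $L$ on $\bdry B$ and $h\le 0$ on a set of measure $\eps$, the negative part $h_-$ on $\bdry B$ satisfies $\|h_-\|_{L^\infty(\bdry B)}\le CL\,\eps^{1/(N-1)}$ (the diameter of the bad set is at most a constant times $\eps^{1/(N-1)}$... actually the bad set need not be connected, so instead use: wherever $h(x)\le 0$ and $h$ is $L$-Lipschitz, $h_-(x)\le L\cdot\mathrm{dist}(x,\set{h>0}\cap\bdry B)$, and $\int_{\bdry B} \mathrm{dist}(\cdot,\set{h>0})\,d\sigma$ is controlled by $\eps^{1+1/(N-1)}$ by a layer-cake/isoperimetric estimate). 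Then I would solve the Dirichlet problem with data $h_-|_{\bdry B}$ to get a harmonic $w\ge 0$ in $B$ with $w=h_-$ on $\bdry B$; by the maximum principle $h_-\le w$ in $B$, so $\set{h<0}\subset\set{w>0}$ has small measure by Harnack applied to $w$ and the bound $w(0)\le \fint_{\bdry B} h_- \le CL\eps$... and then relate $h(0)$: since $h\ge -w$, $h(0)\ge -w(0)$ gives nothing, but $h+w\ge 0$ is harmonic with $(h+w)(0)=h(0)+w(0)$, hmm.

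The decisive estimate I expect to need is a Caccioppoli/Poincaré bound:
\[
\int_{\set{h\le0}}|\nabla h|^2\,dx \le C\Big(\sup_{\bdry B}h_-\Big)\,\sup_{\bdry B}|\nabla_{\mathrm{tan}}h|\;\L\big(\set{h\le 0}\big)^{(N-1)/N}
\]
is too optimistic; instead I will bound $\int_{\set{h\le 0}}|\nabla h|^2$ by $\int_B |\nabla h_-|^2$, use that $h_-$ is subharmonic, and estimate $\int_B|\nabla h_-|^2 = \int_{\bdry B} h_-\,\partial_n h_-\,d\sigma$ where $\partial_n h_-\ge 0$ is the outward normal derivative of the subharmonic function $h_-$; by comparison $h_-\le w$ with $w$ harmonic, so $\partial_n h_- \le \partial_n w$ on the bad set and $\int_{\bdry B} h_-\partial_n h_- \le \sup_{\bdry B} h_- \cdot \int_{\bdry B}\partial_n w\,d\sigma = \sup_{\bdry B}h_-\cdot\big(-\int_B \Delta w\big)$ — but $w$ is harmonic so this vanishes, which is wrong sign bookkeeping. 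So I will instead directly estimate via $w$: $\int_{\set{h\le 0}}|\nabla h|^2 \le \int_B|\nabla w|^2 = \int_{\bdry B} w\,\partial_n w\,d\sigma \le \|w\|_{L^\infty(\bdry B)}\int_{\set{h\le 0}\cap\bdry B}\partial_n w\,d\sigma$, and then bound $\int \partial_n w\,d\sigma$ over the bad set by the measure $\eps$ times the $L^\infty$ bound on $\partial_n w$ there (Lipschitz boundary data $\Rightarrow$ $|\nabla w|$ bounded up to $\bdry B$). That produces $\int_{\set{h\le0}}|\nabla h|^2 \le C\,(L\eps^{1/(N-1)})\cdot \eps\cdot L = CL^2\eps^{1+1/(N-1)}$, a power of $\eps$ but with no $h(0)$ — which is \emph{stronger} than claimed when $h(0)$ is bounded below, but weaker when $h(0)$ is large. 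To recover the exact $[\eps/h(0)]^{1/N}$ form I would combine this $\eps$-power bound with the trivial bound $\int_{\set{h\le0}}|\nabla h|^2\le \int_B|\nabla h|^2 = \int_{\bdry B}h\,\partial_n h$, and interpolate/rescale the two using Harnack to insert $h(0)$: Harnack gives $\sup_{\bdry B}h_+ \le C h(0)$ only after subtracting the bad part, which is where $h(0)$ enters the denominator. The main obstacle, and the place I'd spend the most care, is precisely this last balancing act — getting the exponent $1/N$ and the ratio $\eps/h(0)$ exactly, rather than some other admissible power — which I expect requires a scaling argument: replace $h$ by $h/h(0)$, note the normalized function has value $1$ at the origin and Lipschitz constant $L/h(0)$ on $\bdry B$, and track how the bad-set estimate transforms, then optimize.
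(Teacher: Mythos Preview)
Your proposal does not arrive at a proof, and one of the routes you settle on contains a genuine error: the inequality $\int_{\{h\le 0\}}|\nabla h|^2 \le \int_B |\nabla w|^2$ is backwards. Since $w$ is harmonic with the same boundary values as $h_-$, the Dirichlet principle gives $\int_B|\nabla w|^2 \le \int_B|\nabla h_-|^2 = \int_{\{h<0\}}|\nabla h|^2$, so comparing to $w$ cannot bound the energy from above. You already (correctly) flagged that the bad set on $\partial B$ need not be connected, so the $L^\infty$ bound $\|h_-\|_{L^\infty(\partial B)}\le CL\eps^{1/(N-1)}$ is unavailable; and the ``balancing/scaling'' step that is supposed to insert $h(0)$ with the exponent $1/N$ is never carried out. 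What remains is a sketch of several dead ends rather than a proof.

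The paper's argument is entirely different and explains cleanly both the exponent $1/N$ and the appearance of $h(0)$. The key step is a Poisson-kernel localization: for $x\in B_{1-\kappa}$ the positive boundary values contribute at least $c_N h(0)$ to the Poisson integral (via $|x-y|\le 2$ and the mean-value identity $\fint_{\partial B}h = h(0)$), while the negative boundary values, bounded below by $-2L$ and supported on a set of surface measure $\eps$, contribute at least $-2L\eps/\kappa^N$. Choosing $\kappa = c(\eps L/h(0))^{1/N}$ makes the positive part dominate, so $h>0$ on $B_{1-\kappa}$ and hence $\{h\le 0\}\subset B\setminus B_{1-\kappa}$. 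Once the negative set is trapped in this thin annulus, the energy there is at most $\kappa\cdot\sup_{0<r<1}\int_{\partial B_r}|\nabla h|^2\,d\sigma$; since $|\nabla h|^2$ is subharmonic the supremum is attained at $r=1$, where a spherical-harmonics identity gives $\int_{\partial B}|\nabla h|^2\,d\sigma \le 2\int_{\partial B}|\nabla_\theta h|^2\,d\sigma \le CL^2$. This yields the bound $C\kappa = C(\eps/h(0))^{1/N}$ directly, with $h(0)$ entering through the Poisson-kernel competition rather than any Harnack or interpolation step.
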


\begin{proof}  Since $h(0)>0$, there is at least one point of $\partial B$
at which $h$ is positive.  It follows that
\[
h(y) \ge -2L
\]
for all $y\in \partial B$ and hence in all of $B$ by the maximum
principle.

The Poisson integral formula says
\[
h(x) = \frac{1 - |x|^2}{\omega_N} \int_{\bdry{B}} \frac{h(y)}{|x - y|^N}\,
d\sigma(y), \quad x \in B,
\]
in which $\omega_N = \sigma(\partial B)$.
Therefore,
%Let $\kappa = 2\, (\eps/N \alpha_N\, h(0))^{1/N}$ and note that $\kappa < 1$.
\begin{align*}
\int_{y\in \bdry{B} \cap \set{h > 0}} \frac{h(y)}{|x - y|^N}\, d\sigma(y)
& \ge \frac{1}{2^N} \int_{\bdry{B} \cap \set{h > 0}} h\, d\sigma  \\
& \ge \frac{1}{2^N} \int_{\bdry{B}} h\, d\sigma
 = \frac{\omega_Nh(0)}{2^{N}}.
\end{align*}
For any $\kappa>0$ and any $x \in B_{1 - \kappa}(0)$, we have
\[
\int_{\bdry{B} \cap \set{h \le 0}} \frac{h(y)}{|x - y|^N}\,
 d\sigma(y)
\ge - \frac{2\eps L}{\kappa^N}
\]

Choose
\[
\kappa = 4(\eps L/\omega_N h(0))^{1/N}.
\]
Then for every $x\in B_{1-\kappa}(0)$, $h(x)>0$, and
hence
\[
\set{h \le 0} \subset B \setminus B_{1 - \kappa}(0).
\]

Denoting the spherical part of the gradient by
$\nabla_\theta$, we have $|\nabla_\theta h|\le L$.
Using the expansion of $h$ in spherical harmonics, we have
\[
\int_{\partial B} |\nabla h|^2 d\sigma
\le 2\int_{\partial B} |\nabla_\theta h|^2 d\sigma \le 2L\omega_N
\]
(In fact, the best constant is $N/(N-1)$, achieved by
linear functions $h$.)
Furthermore, since $|\nabla h|^2$ is subharmonic, for all $r<1$,
\[
\int_{\partial B_r} |\nabla h|^2  d\sigma
\le \int_{\partial B} |\nabla h|^2 d\sigma.
\]
Therefore,
\begin{align*}
\int_{\set{h \le 0}} |\nabla h|^2\, dx
& \le \int_{1-\kappa}^1
\int_{\partial B_r} |\nabla h|^2  \,  d\sigma\, dr \\
& \le 2L\omega_N
\int_{1-\kappa}^1 dr = 2L\omega_N \kappa \\
& = c_N L\left(\frac{\eps L}{h(0)}\right)^{1/N}
\end{align*}
\end{proof}

We will now deduce a variant of Lemma \ref{Lemma 10}.
\begin{lemma} \label{lem:nondeg.allr} There exist
constants $r_0$, $c_0>0$ and $c_1 > 0$ such that whenever
$x_0 \in \bdry{\set{u > 1}}$ and $0 < r \le r_0$,
there is $x_1\in \bdry{B_r(x_0)}$ such that
\[
u(x_1) - 1 \ge c_1 r
\]
Moreover,
\begin{equation} \label{65}
\fint_{\bdry{B_r(x_0)}} (u - 1)_+\, d\sigma  \ge c_0 r
\end{equation}
\end{lemma}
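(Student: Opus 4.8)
The plan is to obtain both assertions as consequences of Lemma~\ref{Lemma 10} together with the Lipschitz continuity of $u$, using nothing deeper than the maximum principle; this is why the lemma is called ``a variant of Lemma~\ref{Lemma 10}.''

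For the pointwise bound, Lemma~\ref{Lemma 10} furnishes, for $0<r\le r_0$ and $x_0\in\bdry{\set{u>1}}$, a point $x\in B_r(x_0)\setminus B_{r/2}(x_0)$ with $u(x)-1\ge\gamma r$. To push this large value out to the sphere $\bdry{B_r(x_0)}$ I would use that $(u-1)_+$ is subharmonic up to a controlled quadratic error. Set $M=\max_{\bar\Omega}(u-1)_+$ and $c=M^{p-1}/(2N)$. By \eqref{19}, $\Delta u\ge-(u-1)_+^{p-1}\ge-M^{p-1}$, so $u-1+c|x|^2$ is subharmonic in $\Omega$; since $c|x|^2$ is also subharmonic and $(u-1)_++c|x|^2=\max\set{u-1+c|x|^2,\,c|x|^2}$, the function $(u-1)_++c|x|^2$ is subharmonic, and hence so is $w:=(u-1)_++c|x-x_0|^2$ (the two differ by a harmonic, indeed affine, function). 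Because $u(x_0)=1$, the point $x_0$ lies at distance at least $\delta_0$ from $\bdry\Omega$, so for $r\le\delta_0/2$ we have $\bar B_r(x_0)\strictsubset\Omega$ and the maximum principle for subharmonic functions applies: $\max_{\bdry{B_r(x_0)}}w\ge w(x)\ge u(x)-1\ge\gamma r$. Since $w=(u-1)_++cr^2$ on $\bdry{B_r(x_0)}$, this gives $\max_{\bdry{B_r(x_0)}}(u-1)_+\ge\gamma r-cr^2\ge\tfrac{\gamma}{2}\,r$ once $r\le r_0':=\min\set{r_0,\delta_0/2,\gamma/2c}$. Taking $c_1=\gamma/2$ and letting $x_1$ be a point of $\bdry{B_r(x_0)}$ attaining the maximum proves the first statement.

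The averaged bound \eqref{65} then follows from the pointwise bound by an elementary argument, and needs no further structure of $u$. Shrinking $c_1$ if necessary we may assume $c_1\le L$, where $L$ is the Lipschitz constant of $u$. On the spherical cap $S=\set{x\in\bdry{B_r(x_0)}:|x-x_1|<c_1 r/2L}$ we have, by Lipschitz continuity, $u(x)-1\ge u(x_1)-1-L|x-x_1|\ge c_1 r-\tfrac12 c_1 r=\tfrac12 c_1 r>0$, so $(u-1)_+\ge\tfrac12 c_1 r$ on $S$. Since $c_1/2L\le1/2$, the cap $S$ is a genuine cap and has $\sigma$-measure at least $c_N(c_1/2L)^{N-1}r^{N-1}$ for a dimensional constant $c_N$, while $\sigma(\bdry{B_r(x_0)})=c_N' r^{N-1}$. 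Hence
\[
\fint_{\bdry{B_r(x_0)}}(u-1)_+\,d\sigma\ \ge\ \frac{1}{c_N'r^{N-1}}\cdot\frac{c_1 r}{2}\cdot c_N\left(\frac{c_1}{2L}\right)^{N-1}r^{N-1}\ =:\ c_0\,r,
\]
which is \eqref{65}.

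I do not expect a serious obstacle: essentially all the work is already contained in the chaining/blow-up argument behind Lemma~\ref{Lemma 10}. The only points needing a little care are (i) the subharmonicity of $(u-1)_+$ modulo a quadratic term, which one extracts cheaply from \eqref{19} via ``the maximum of two subharmonic functions is subharmonic,'' thereby avoiding any integration by parts across the free boundary; (ii) the check that $\bar B_r(x_0)\strictsubset\Omega$ so the maximum principle may be invoked, which holds because $u(x_0)=1$ forces $x_0$ to lie at distance $\ge\delta_0$ from $\bdry\Omega$ while $r\le\delta_0/2$; and (iii) the harmless normalization $c_1\le L$ used in the cap estimate.
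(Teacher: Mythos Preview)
Your proof is correct and follows essentially the same route as the paper: both parts rest on Lemma~\ref{Lemma 10}, the near-subharmonicity of $(u-1)_+$, and the Lipschitz bound. The only cosmetic difference is in the first assertion: the paper argues by contradiction via a barrier (solve $\Delta v=-(Lr)^{p-1}$ in $B_r(x_0)$ with small constant boundary data, compare $(u-1)_+\le v$, and contradict Lemma~\ref{Lemma 10}), whereas you argue directly with the subharmonic function $(u-1)_++c|x-x_0|^2$ and the maximum principle; your derivation of \eqref{65} from the pointwise bound via a Lipschitz cap estimate is identical to the paper's.
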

\begin{proof}
Suppose by contradiction that there is no such $c_1$.  Then
$(u(x)-1)_+ \ll r$ on $\partial B_r(x_0)$.  Note, in addition, that
\[
\Delta (u-1)_+ \ge -(u-1)_+^{p-1} \ge  - (L r)^{p-1} \quad \mbox{in} \quad B_r(x_0).
\]
Consider the barrier function $v$ solving $\Delta v = -(Lr)^{p-1}$ in
$B_r(x_0)$, with constant boundary values
$v = a \ll r$ on $\partial B_r(x_0)$.
Then $(u-1)_+ \le v$,  and for sufficiently small $r$, $v \ll r$
on all of $B_r(x_0)$.
But this contradicts Lemma \ref{Lemma 10} which says that there is a
point of $B_r(x_0) \setminus B_{r/2}(x_0)$ at which $u-1$ is larger
than $\gamma r$.

Next, take $x_1\in \bdry{B_r(x_0)}$ as above
for which $u(x_1)> 1+ c_1r$.  By Lipschitz continuity,
$u(x) > 1+ c_1r/2$ on $B_{c_1r/2L}(x_1)$.
Thus we have \eqref{65} for a constant $c_0>0$ depending only on $c_1$ and $L$.
\end{proof}

%by Proposition 5.5 of Lederman and Wolanski \cite{MR2281453}. We may assume without loss of generality that $r_0 \le \delta_0/2$, so that $\closure{B_r(x_0)} \subset \calN_{\delta_0/2}(\set{u \ge 1})$.

\begin{lemma} \label{Lemma 14}
There exist a  positive constants $c$, $\eps_0$ and $C$ depending on
dimension and the Lipschitz constant  $L$ such that
whenever $x_0 \in \bdry{\set{u > 1}}$, $0 < r \le r_0$,
\begin{equation} \label{64}
\sigma(\bdry{B_r(x_0)} \cap \set{u \le 1}) < \eps r^{N-1}
\end{equation}
for some $\eps < \eps_0$,
and $v$ is the harmonic function in $B_r(x_0)$ with $v = u$ on $\bdry{B_r(x_0)}$, we have
\begin{gather}
\label{62}
\int_{B_r(x_0)} (v - 1)_+^p\, dx +
 \int_{B_r(x_0)} (u - 1)_+^p\, dx   \le Cr^{p+N},\\[7.5pt]
\label{61} \int_{B_r(x_0)} |\nabla v|^2\, dx \le \int_{B_r(x_0)} |\nabla u|^2\, dx - cr^N,\\[7.5pt]
\label{63} \int_{\set{v \le 1}\cap B_r(x_0)} |\nabla v|^2\, dx
\le C \eps^{1/N} r^N.
\end{gather}
\end{lemma}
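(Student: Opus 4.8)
The estimates \eqref{62}, \eqref{61}, \eqref{63} should be proved in that order, since each one feeds into the next. First, for \eqref{62}: by Lemma \ref{Lemma 4} and the Lipschitz bound, $u \le M$ and $u - 1 \le Lr$ on $B_r(x_0)$ (recall $x_0 \in \bdry\set{u>1}$, so $u(x_0)=1$), hence $(u-1)_+ \le Lr$ there and $\int_{B_r(x_0)} (u-1)_+^p\,dx \le L^p r^p \cdot \L(B_r(x_0)) = O(r^{p+N})$. For $v$, the maximum principle gives $v \le \max_{\bdry B_r(x_0)} u \le 1 + Lr$, and since $v$ is harmonic while $u$ is superharmonic in $B_r(x_0)$ (by \eqref{19}, or directly $-\Delta u = (u-1)_+^{p-1}\ge 0$) with the same boundary data, $v \ge u$; in particular $v \ge 1$ only where forced by the boundary, and in all cases $0 \le (v-1)_+ \le Lr$, so the same bound $O(r^{p+N})$ holds. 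This gives \eqref{62}.

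Next, \eqref{61}: since $v$ is the harmonic replacement of $u$ in $B_r(x_0)$ with $v=u$ on the boundary, the Dirichlet-energy comparison gives
\[
\int_{B_r(x_0)} |\nabla u|^2\,dx - \int_{B_r(x_0)} |\nabla v|^2\,dx = \int_{B_r(x_0)} |\nabla(u-v)|^2\,dx,
\]
using $\int \nabla v\cdot\nabla(u-v) = 0$ because $u-v \in H^1_0(B_r(x_0))$ and $v$ is harmonic. So it suffices to bound $\int_{B_r(x_0)} |\nabla(u-v)|^2\,dx$ below by $cr^N$. This is where the complementary nondegeneracy of Lemma \ref{lem:nondeg.allr} enters: the averaged lower bound \eqref{65} says $\fint_{\bdry B_r(x_0)}(u-1)_+\,d\sigma \ge c_0 r$, so the harmonic function $v-1$ has boundary average $\ge c_0 r$ and therefore, by the mean value property, $(v-1)(x_0) = \fint_{\bdry B_r(x_0)}(v-1)\,d\sigma \ge c_0 r - (1-\fint (u-1)_-)$... more carefully, $v(x_0) = \fint_{\bdry B_r(x_0)} u\,d\sigma \ge 1 + c_0 r - \fint_{\bdry B_r(x_0)}(u-1)_-\,d\sigma$; bounding $(u-1)_- \le 1$ only on the small bad set where $u\le 1$ and using \eqref{64}, $\fint_{\bdry B_r(x_0)}(u-1)_-\,d\sigma \le \eps r^{N-1}/(c_N r^{N-1}) = \eps/c_N$, which is small, so in fact we should instead argue that $u-1 \ge -\,\text{(small)}$ on most of the sphere. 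In any case, $v(x_0) \ge 1 + c_0 r/2$ for $\eps$ small, while $u(x_0) = 1$; since $u-v$ vanishes on $\bdry B_r(x_0)$ and $|(u-v)(x_0)| \ge c_0 r/2$, the Poincaré inequality on $B_r(x_0)$, $\int_{B_r(x_0)} |u-v|^2\,dx \le C r^2 \int_{B_r(x_0)}|\nabla(u-v)|^2\,dx$, combined with interior gradient estimates for the harmonic function $u - v + $ (its harmonic part)... the cleanest route is: $u-v$ solves $-\Delta(u-v) = (u-1)_+^{p-1}$ in $B_r(x_0)$ with zero boundary data, $|(u-v)(x_0)| \ge c_0 r/2$, and $\|(u-1)_+^{p-1}\|_\infty = O(r^{p-1})$, so writing $u-v = w_0 + w_1$ with $w_0$ the Newtonian-potential part (size $O(r^{p+1})$, negligible for small $r$) and $w_1$ harmonic with $|w_1(x_0)| \ge c_0 r/3$, an interior estimate forces $\int_{B_{r}(x_0)}|\nabla w_1|^2 \ge c r^N$ (a harmonic function of size $\gtrsim r$ at the center of $B_r$, vanishing nowhere forced to be $0$ on the boundary... rather: with $L^\infty$-norm on $\bdry B_r$ at most $Lr$, its oscillation is $\Omega(r)$, hence its gradient energy is $\gtrsim r^N$ by a standard rescaling/compactness argument). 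This yields \eqref{61}.

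Finally, \eqref{63}: apply Lemma \ref{Lemma 13} to the rescaled function $h(y) = \frac{1}{r}(v(x_0+ry) - 1)$, which is harmonic in $B_1(0)$, has $h(0) = \frac1r(v(x_0)-1) \ge c_0/2 > 0$ by the computation above, and is Lipschitz on $\bdry B_1(0)$ with constant $L$ (from $u$'s Lipschitz bound, transferred through $v=u$ on the boundary). Its parameter is $\sigma(\bdry B_1(0)\cap\set{h\le 0}) = \sigma(\bdry B_1(0)\cap\set{v\le 1}) = r^{-(N-1)}\sigma(\bdry B_r(x_0)\cap\set{u\le 1}) < \eps$ by \eqref{64}. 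Lemma \ref{Lemma 13} then gives $\int_{\set{h\le 0}}|\nabla h|^2\,dy \le C(\eps/h(0))^{1/N} \le C'\eps^{1/N}$, and undoing the rescaling ($\int_{\set{v\le1}\cap B_r(x_0)}|\nabla v|^2\,dx = r^N\int_{\set{h\le 0}}|\nabla h|^2\,dy$) produces \eqref{63}.

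\textbf{Main obstacle.} The delicate point is \eqref{61}: one must convert the one-point/averaged lower bound on $u-1$ at scale $r$ (Lemma \ref{lem:nondeg.allr}) into a genuine $r^N$ lower bound on the energy gap $\int |\nabla(u-v)|^2$, while controlling the error coming from the inhomogeneous term $(u-1)_+^{p-1}$. The fact that $p>2$ (so the source term is $O(r^{p-1})$ with $p-1 > 1$, hence lower-order compared to the $\Omega(r)$ oscillation) is exactly what makes this work, and tracking that the "bad" boundary set of measure $<\eps r^{N-1}$ contributes only a small error to $v(x_0)$ requires choosing $\eps_0$ appropriately at the start.
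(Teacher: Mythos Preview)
Your treatments of \eqref{62} and \eqref{63} match the paper's. The gap is in \eqref{61}.

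Your ``cleanest route'' asserts that $-\Delta(u-v)=(u-1)_+^{p-1}$ in $B_r(x_0)$. This is not true: the equation $-\Delta u=(u-1)_+^{p-1}$ holds only on $\Omega\setminus\partial\{u>1\}$, and the free boundary passes through $x_0$. Across $\partial\{u>1\}$ the distribution $-\Delta u$ picks up a singular measure (this is precisely the free boundary condition), so the decomposition $u-v=w_0+w_1$ with $w_0$ a potential of a bounded source and $w_1$ harmonic is not available. Indeed, if it were, then since $u-v=0$ on $\partial B_r$ and the source is $O(r^{p-1})$, the maximum principle would force $|u-v|=O(r^{p+1})$ in $B_r$, contradicting the very estimate $|(u-v)(x_0)|\ge c_0r/2$ you derived.

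The paper stays with the Poincar\'e approach you began: from
\[
\int_{B_r(x_0)}\bigl(|\nabla u|^2-|\nabla v|^2\bigr)\,dx=\int_{B_r(x_0)}|\nabla(u-v)|^2\,dx\ge\frac{\lambda_1}{r^2}\int_{B_r(x_0)}(u-v)^2\,dx
\]
it suffices to show $|u-v|\ge c_0r/3$ on a sub-ball $B_{\kappa r}(x_0)$. This follows from the triangle inequality
\[
|u(x)-v(x)|\ge |v(x_0)-u(x_0)|-|u(x)-u(x_0)|-|v(x)-v(x_0)|,
\]
using $|u(x)-u(x_0)|\le L\kappa r$ (Lipschitz) and $|v(x)-v(x_0)|\le C\kappa r$ (interior gradient estimate for the harmonic function $v$ with $|v-1|\le Lr$ on $B_r$), then choosing $\kappa$ small. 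No equation for $u-v$ is needed.

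One minor correction in the same step: when bounding $\fint_{\partial B_r(x_0)}(u-1)_-\,d\sigma$, the estimate $(u-1)_-\le 1$ gives only $\eps/\omega_N$, which does not beat $c_0r$ uniformly in $r$. Use instead $(u-1)_-\le Lr$ on $\partial B_r(x_0)$ (from $u(x_0)=1$ and Lipschitz), so the average is at most $L\eps r/\omega_N$, and $v(x_0)-1\ge(c_0-L\eps/\omega_N)r\ge c_0r/2$ for $\eps<\eps_0$.
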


\begin{proof} We have $|u - 1| \le {L} r$ on $\closure{B_r(x_0)}$, and hence $|v-1|\le Lr$
by the maximum principle. Thus \eqref{62} follows.

To prove \eqref{61}, begin by noting that
\[
\int_{B_r(x_0)} \left(|\nabla u|^2 - |\nabla v|^2\right) dx = \int_{B_r(x_0)} |\nabla (u - v)|^2\, dx + 2 \int_{B_r(x_0)} \nabla (u - v) \cdot \nabla v\, dx.
\]
Since $u - v \in H^1_0(B_r(x_0))$,
\[
\int_{B_r(x_0)} |\nabla (u - v)|^2\, dx \ge \frac{\lambda_1}{r^2} \int_{B_r(x_0)} (u - v)^2\, dx,
\]
where $\lambda_1 > 0$ is the first Dirichlet eigenvalue of the negative Laplacian in $B_1(0)$, and since $u = v$ on $\bdry{B_r(x_0)}$ and $\Delta v = 0$ in $B_r(x_0)$,
an integration by parts gives
\[
\int_{B_r(x_0)} \nabla (u - v) \cdot \nabla v\, dx = \int_{\bdry{B_r(x_0)}} (u - v)\, \frac{\partial v}{\partial n} - \int_{B_r(x_0)} (u - v)\, \Delta v\, dx = 0
\]
Hence
\begin{equation} \label{60}
\int_{B_r(x_0)} \left(|\nabla u|^2 - |\nabla v|^2\right) dx \ge \frac{\lambda_1}{r^2} \int_{B_r(x_0)} (u - v)^2\, dx.
\end{equation}

We have
\begin{equation} \label{56}
|u(x) - v(x)| \ge |v(x_0) - u(x_0)| - |u(x) - u(x_0)| - |v(x) - v(x_0)|.
\end{equation}
Fix $\kappa \in (0,1/2)$.
Furthermore, for
\begin{equation} \label{58}
|u(x) - u(x_0)| \le L |x - x_0| \le L \kappa r, \quad x \in B_{\kappa r}(x_0)
\end{equation}
Since $|v-1| \le Lr$ on $B_r(x_0)$ and
$v$ is harmonic inside the ball, it follows that
$|\nabla v|\le C$ on $B_{\kappa r}(x_0)$, and hence
\begin{equation} \label{59}
|v(x) - v(x_0)| \le C \kappa r \quad \mbox{for all} \ x\in B_{\kappa r}(x_0).
\end{equation}
Since $u(x_0)=1$, the mean value property of $v$ implies
\begin{equation}\label{57}
\begin{aligned}
v(x_0) - u(x_0)
& = \fint_{\bdry{B_r(x_0)}} (u - 1)\, d\sigma
= \fint_{\bdry{B_r(x_0)}} (u - 1)_+\, d\sigma
- \fint_{\bdry{B_r(x_0)}} (u - 1)_-\, d\sigma \\
& \ge \left(c_0 - \frac{L \eps}{\omega_N}\right) r \ge \frac{c_0}{2}\, r
\end{aligned}
\end{equation}
by \eqref{65}, \eqref{64}, and since $|u - 1| \le Lr$ on $\bdry{B_r(x_0)}$.
Combining \eqref{56}--\eqref{57} and taking $\kappa$ sufficiently small
gives $|u(x) - v(x)| \ge c_0 r/3$, for all $x\in B_{\kappa r}(x_0)$,
Together with \eqref{60}, this yields \eqref{61}.

To prove \eqref{63}, we apply Lemma \ref{Lemma 13} to
\[
h(y) = \frac{1}{r}\, (v(x_0 + ry) - 1), \quad y \in B,
\]
noting that
\[
h(0) = \frac{1}{r}\, (v(x_0) - u(x_0)) \ge \frac{c_0}{2}
\]
by \eqref{57}.
\end{proof}

%We are now ready to prove Proposition \ref{Theorem 3}.

\begin{proof}[Proof of Proposition \ref{Theorem 3}]
Let $r_0$ and $\gamma > 0$ be as in Lemma \ref{Lemma 10},
let $x_0 \in \bdry{\set{u > 1}}$, and let $0 < r \le r_0$.
Then there is $ x_1 \in B_{r/2}(x_0)$ such that $u(x_1) \ge 1 + \gamma r/2$.
Let $\kappa = \min \bgset{1/2,\gamma/2 L}$. Then
%Hence,
%, and note that $B_{\kappa r}(x_1) \subset B_r(x_0)$. Since $x_1 \in \set{u \ge 1}$ and $\kappa r \le r/2 \le r_0/2 \le \delta_0/2$, we have $B_{\kappa r}(x_1) \subset \calN_{\delta_0/2}(\set{u \ge 1})$ and hence
\[
u(x) \ge u(x_1) - L |x - x_1| > 1 + \left(\frac{\gamma}{2} - L \kappa\right) r
\ge 1 \quad \forall x \in B_{\kappa r}(x_1),
\]
so the volume fraction of $\set{u>1}$ in $B_r(x_0)$ of \eqref{55}
is at least $\kappa^N$.

If the second inequality in \eqref{55} does not hold, then for arbitrarily
small $\rho, \gamma > 0$, $\exists\, x_0 \in \bdry{\set{u > 1}}$ such that
\begin{equation} \label{68}
\vol{\set{u \le 1} \cap B_\rho(x_0)} < \gamma \rho^N.
\end{equation}
Then
\[
\int_{\rho/2}^\rho \sigma(\set{u\le 1} \cap \partial B_r(x_0))\, dr
< \gamma \rho^N,
\]
and hence for some $r$, $\rho/2 \le r \le \rho$,
\[
\sigma(\set{u\le 1} \cap \partial B_r(x_0))
\le 2\gamma \rho^{N-1} \le 2^N \gamma r^{N-1}
\]
In other words, inequality \eqref{64} in Lemma \ref{Lemma 14} holds for
$\eps = 2^N \gamma$ and some $r \in (\rho/2,\rho)$.

Let $v$ be as in Lemma \ref{Lemma 14}, and let $w = v$
in $B_r(x_0)$ and $w = u$ in $\Omega \setminus B_r(x_0)$. Then
\begin{gather}
\label{66} \int_\Omega |\nabla w|^2\, dx \le \int_\Omega |\nabla u|^2\, dx - cr^N,\\[7.5pt]
\label{67} \abs{\int_{\set{w > 1}} (w - 1)^p\, dx - \int_{\set{u > 1}} (u - 1)^p\, dx} \le Cr^{p+N},\\[7.5pt]
\label{72} \int_{\set{w \le 1}} |\nabla w|^2\, dx \le \int_{\set{u \le 1}} |\nabla u|^2\, dx + C \eps^{1/N} r^N
\end{gather}
by \eqref{61}, \eqref{62}, and \eqref{63}, respectively.

By \eqref{68},
\begin{align*}
\L(\set{w>1}) & \le
\L(\set{u>1}\setminus B_r(x_0)) + \L(B_r(x_0)) \\
& = \L(\set{u>1}) + \L (\set{u\le  1} \cap B_r(x_0)) \\
& \le \L(\set{u>1}) + \gamma \rho^N,
\end{align*}
Recalling $r \ge \rho/2$ and $\eps = 2^N\gamma$, we have
\begin{equation} \label{73}
\vol{\set{w > 1}} \le \vol{\set{u > 1}} + \eps r^N.
\end{equation}

Estimate \eqref{68} also implies
\[
\int_{\set{u \le 1} \cap B_r(x_0)} |\nabla u|^2\, dx \le L^2 \gamma \rho^N
\]
which together with \eqref{66} gives
\begin{equation} \label{70}
\int_{\set{w > 1}} |\nabla w|^2\, dx \le \int_{\set{u > 1}} |\nabla u|^2\, dx - \frac{c}{2}\, r^N
\end{equation}
for sufficiently small $\gamma$.

Referring to \eqref{71}, by \eqref{70} and \eqref{67},
\[
s_w \le \left[\frac{\dint_{\set{u > 1}} |\nabla u|^2\, dx - \frac{c}{2}\, r^N}{\dint_{\set{u > 1}} (u - 1)^p\, dx - Cr^{p+N}}\right]^{1/(p-2)} \le 1
\]
for sufficiently small $r$ since $u \in \M$. Then by Lemma \ref{Lemma 8},
\[
J(\pi(w)) \le \frac12 \int_{\set{w<1}} |\nabla w|^2 \, dx
+ \left(\frac12 - \frac1p\right) \int_{\set{w>1}} |\nabla w|^2  \, dx
+ \L(\set{w>1})
\]
Finally, using \eqref{72}, \eqref{73}, and \eqref{70},
\[
J(\pi(w)) \le J(u) + \left[C \eps^{1/N} + \eps - \left(\frac{1}{2} - \frac{1}{p}\right) \frac{c}{2}\right] r^N < J(u)
\]
if $\eps$ is sufficiently small.   This is a contradiction, since $\pi(w) \in \M$
and $u$ minimizes $\restr{J}{\M}$.
\end{proof}

\section{Proof of regularity of the free boundary}

The purpose of this section is to prove Theorem \ref{thm:corollary}.
We do this by taking blow-up limits and applying a monotonicity result of
G. Weiss.

Consider a boundary point $x_0\in F(u)= \partial \set{u>1}$.
The Lipschitz continuity of $u$ implies there is
a sequence $r_j\to 0$ such that
\[
w_j(y) = r_j^{-1} (u(x_0 + r_jy) -1)
\]
converges uniformly on compact subsets of $\R^N$ to a Lipschitz continuous
function $W(y)$.   We now show that $W$ inherits all the properties we
found for $u$.
\begin{lemma} \label{lem:limitW}

a) The function $W$ is Lipschitz continuous, uniformly in all
$\R^N$, and solves the interior Euler-Lagrange equation
\[
\Delta W = 0 \quad \mbox{in} \quad \R^N \setminus \bdry \set{W>0}.
\]

b) (nondegeneracy of $W$) There is $c>0$ such that for every $r>0$
and every $y_1$ such that $B_r(y_1)\subset \set{W>0}$ we have
\[
W(y_1) \ge cr.
\]
For every $r>0$ and every $y_0\in \bdry \set{W>0}$
there is $y_1\in B_r(y_0)$ such that
\[
W(y_1) \ge cr.
\]

c) (locally finite perimeter)
There is a constant $C$ such that for every ball $B_r$ of radius $r>0$,
\[
\sigma(B_r\cap \bdry \set{W>0}) \le C r^{N-1}.
\]

d) (nondegeneracy of the phase $\set{W\le 0}$)
For every $r>0$ and every $y_0\in \bdry \set{W>0}$,
\[
\L(B_r(y_0) \cap \interior{\set{W\le0}}) \ge cr^N, \quad
\L(B_r(y_0) \cap \set{W>0}) \ge cr^N.
\]

e) (viscosity solution)
For every $r>0$, if there is a tangent ball from either
side of the free boundary, that is, a ball $B_r$ such that
$y_0 \in \partial B_r \cap \bdry \set{W>0}$  and either
$B_r  \subset \set{W>0}$ or $B_r \subset \set{W \le 0}$, then
$W$ has an asymptotic expansion as $y\to y_0$ of the form
\[
W(y) =  \alpha \langle y-y_0, \nu\rangle_+
- \beta \langle y-y_0, \nu\rangle_- + o(|y-y_0|),
\]
with $\alpha>0$, $\beta \ge 0$ and $\alpha^2 - \beta^2 = 2$.

f) (variational solution) $W$ satisfies the variational equation
\[
\int_{\R^N}
\left[\left( \frac12 |\nabla W|^2
+ \chi_{\set{W>1}}\right) \mbox{\rm div}\, \Phi
- \nabla w(D\Phi)\cdot \nabla w \right] \, dx = 0,
\]
for every $\Phi\in C_0^\infty(\R^N,\R^N)$.
\end{lemma}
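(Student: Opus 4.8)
The plan is to derive each property (a)--(f) of the blow-up limit $W$ by rescaling the corresponding property already established for $u$ and passing to the limit. Write $w_j(y) = r_j^{-1}(u(x_0+r_jy)-1)$, so that $W = \lim w_j$ uniformly on compact sets. The key point throughout is that every estimate we have for $u$ is \emph{scale-invariant of the right order}, so the rescaling $w_j$ inherits it with the \emph{same} constants, and the constants survive the limit.

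\medskip

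First I would treat (a). The interior equation $-\Delta u = (u-1)_+^{p-1}$ in $\set{u\ne 1}$ rescales to $-\Delta w_j = r_j^p\, (w_j)_+^{p-1}$ in $\set{w_j\ne 0}$; since $w_j$ is uniformly bounded on each fixed ball (Lipschitz bound on $u$) and $r_j\to 0$, the right-hand side tends to $0$ locally uniformly. Testing against $\varphi\in C_0^\infty(\set{W>0})$ and using $w_j\to W$ uniformly together with the uniform $H^1_{\loc}$ bound (again from the Lipschitz estimate) gives $\Delta W = 0$ in $\set{W>0}$; the same argument in $\set{W<0}$, plus the supermean-value argument used for $u$ in Lemma~\ref{Lemma 6}\ref{6.iii} (namely $-\Delta W\le 0$ combined with $\Delta\min(W,0)\le 0$), gives harmonicity on $\interior{\set{W\le 0}}$. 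The uniform Lipschitz constant of $W$ is just the Lipschitz constant $L$ of $u$ on $\bar\Omega$, now valid on all of $\R^N$ because $x_0+r_jB_R\subset\set{u>1/2,\ \dist{\cdot}{\bdry\Omega}\ge\delta_0/2}$ for $j$ large.

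\medskip

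For (b), (c), (d) I would invoke Definition~\ref{def:nondegenerate} / Proposition~\ref{Proposition 2}, Proposition~\ref{Theorem 2}, and Proposition~\ref{Theorem 3} respectively. These hold for $u$ with constants independent of position once $r\le\delta_0/2$; under the scaling $y\mapsto x_0+r_jy$ the radius $r$ becomes $r_j r$, which is $\le\delta_0/2$ for $j$ large, so each statement passes to $w_j$ with an $r$-range expanding to all of $(0,\infty)$. For the lower perimeter-type and volume-type bounds one must check that the free boundary $\bdry\set{w_j>0}$ converges to $\bdry\set{W>0}$ in the Hausdorff sense on compact sets; this is standard given the two-sided nondegeneracy (b)+(d-for-$u$), which prevents the positivity set from thinning to measure zero or fattening in the limit, so $\chi_{\set{w_j>0}}\to\chi_{\set{W>0}}$ in $L^1_{\loc}$ and the measure estimates are stable. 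Part (e) is then immediate: $W$ is a nondegenerate singular limit of solutions of equations of type $E_{\eps}(f^{\eps})$ (take the approximating family for $u$ and rescale), so Corollaries~7.1 and 7.2 of Lederman--Wolanski \cite{MR2281453} apply exactly as in the proof of Theorem~\ref{thm:nondegeneracy}, and the complementary nondegeneracy (d) rules out the two-sided-positivity branch, forcing $\alpha^2-\beta^2=2$.

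\medskip

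Finally, for (f) I would start from the variational identity for $u$ (Definition~\ref{def:variational}, established in Section~4), insert a test field of the form $\Phi_j(x)=\Phi((x-x_0)/r_j)$ with $\Phi\in C_0^\infty(\R^N,\R^N)$, change variables, and note that the rescaled identity reads
\[
\int_{\R^N}\left[\left(\tfrac12|\nabla w_j|^2+\chi_{\set{w_j>0}}-\tfrac{r_j^p}{p}(w_j)_+^p\right)\divg\Phi-\nabla w_j(D\Phi)\cdot\nabla w_j\right]dx=0.
\]
The term $r_j^p(w_j)_+^p\to 0$ uniformly on $\supp\Phi$; the gradient terms pass to the limit because $w_j\to W$ strongly in $H^1_{\loc}$ (the strong convergence follows from weak convergence plus the energy identity one gets by testing the rescaled Euler--Lagrange equation against $w_j$ itself, exactly as in Lemma~\ref{Lemma 6}\ref{6.ii}); and the only delicate term, $\int\chi_{\set{w_j>0}}\divg\Phi$, converges to $\int\chi_{\set{W>0}}\divg\Phi$ by the $L^1_{\loc}$ convergence of the characteristic functions noted above, which in turn rests on the two-sided nondegeneracy so that the ``transition layer'' $\set{0<w_j<\delta}$ has measure $o(1)$ as $j\to\infty$ after first sending an auxiliary $\delta\to 0$. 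The main obstacle, and the only place requiring genuine care rather than bookkeeping, is exactly this last point: establishing $\chi_{\set{w_j>0}}\to\chi_{\set{W>0}}$ in $L^1_{\loc}$ (equivalently, that no positive-measure ``ghost'' region $\set{W=0}$ with $w_j>0$ nearby can form, and no positivity is lost in the limit). This is precisely what Propositions~\ref{Theorem 2} and~\ref{Theorem 3} were built to supply, so the proof is really an assembly of the earlier machinery rather than new analysis.
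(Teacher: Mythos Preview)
Your overall strategy---rescale each estimate for $u$ and pass to the limit---is exactly the paper's, and most of your outline is right. Two places need correction.

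\textbf{Part (c).} You propose to pass the bound $\sigma(\bdry\set{w_j>0}\cap B_r)\le Cr^{N-1}$ to $W$ via Hausdorff convergence of the free boundaries. That does not work: $(N-1)$-dimensional Hausdorff measure is not upper semicontinuous under Hausdorff convergence of compact sets (think of finite sets converging to a segment), so a uniform bound on the approximating boundaries does not immediately bound the limit. The paper avoids this entirely: once (a) and (b) are known for $W$, the proof of Proposition~\ref{Theorem 2} applies verbatim to $W$ itself (Lipschitz plus nondegeneracy are the only inputs). As a byproduct one gets $\L(\bdry\set{W>0})=0$, which is then used---not assumed---to prove the $L^1_{\loc}$ convergence $\chi_{\set{w_j>0}}\to\chi_{\set{W>0}}$ that feeds into (d) and (f).

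\textbf{Part (f).} You claim strong $H^1_{\loc}$ convergence of $w_j$ ``exactly as in Lemma~\ref{Lemma 6}\ref{6.ii}.'' That argument relied on testing the equation against $u_j-1$, which worked because $u_j$ vanishes on $\bdry\Omega$; the rescaled functions $w_j$ do not vanish on the boundary of any fixed ball, so the boundary term does not drop out and the energy identity is unavailable in that form. The paper's route is different and cleaner: on any compact subset of $\set{W>0}\cup\interior{\set{W\le 0}}$ the $w_j$ satisfy a uniformly elliptic equation with right side $O(r_j^p)$, hence are bounded in $C^{2,\alpha}$ there, so $\nabla w_j\to\nabla W$ pointwise on $\R^N\setminus\bdry\set{W>0}$; since the free boundary is null (by (c)) and $|\nabla w_j|\le L$, dominated convergence gives the limit in every term of the rescaled variational identity. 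This \emph{does} yield strong $H^1_{\loc}$ convergence, but the mechanism is interior regularity plus a null boundary, not an energy identity.

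A minor remark on (a): your supermean-value argument for harmonicity in $\interior{\set{W\le 0}}$ is fine, but the paper does something slightly sharper---it first proves Hausdorff convergence $\overline{\set{w_j>0}}\to\overline{\set{W>0}}$ (using nondegeneracy of $u$ to rule out stray positivity), which shows directly that $w_j\le 0$ on compact subsets of $\interior{\set{W\le 0}}$ for large $j$, whence $\Delta w_j=0$ there and the limit is immediate. This Hausdorff convergence is also what makes (d) go through.
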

\begin{proof}  All of the results except part (f)
are proved by methods of Caffarelli described in
\cite{MR861482, MR990856, MR1029856} and \cite{MR2145284}.
Part (f) is proved the same way as \cite{Jerison-Kamburov} Proposition 4.2.

On any compact subset of $\set{W>0}$, we have $w_j>0$ for
all sufficiently large $j$, and therefore $W$ inherits
the first nondegeneracy property of part (b) from $w_j$.
Moreover, the equation $\Delta w_j = r_j (w_j)_+^{p-1}$
holds in a fixed neighborhood of the compact set.
It follows that $w_j$ belongs to $C^{2,\alpha}$ uniformly
on the compact set.  Hence a subsequence of $w_j$ converges
is $C^2$ to $W$.  Taking the limit in the equation we
find that $\Delta W = 0$ on $\set {W>0}$.
The second nondegeneracy property of (b) follows from the first using
the Lipschitz bound and the fact that $W$ is harmonic
in the set $\set{W>0}$.

Denote
\[
E_j = \overline{ \set{w_j>1}}, \quad E = \overline{\set W>1}.
\]
We claim that for a suitable subsequence
\begin{equation}\label{eq:Hausdorff.conv}
E_j \cap \bar B \to E \cap \bar B
\end{equation}
in Hausdorff distance for every ball $B\subset \R^N$.
Choose the subsequence so that $E_j\cap \bar B_R$ converges
in Hausdorff distance to a compact set $K$.
We wish to show that
\[
K = E \cap \bar B
\]
Indeed, the fact that $w_j$ converges uniformly to $W$
implies $K \supset \set{W>0} \cap B$ and hence,
since $K$ is compact, $K \supset E \cap \bar B$.

If $x\notin E$, then we now show that for
sufficiently small $\eps>0$ and large enough $j$,
\begin{equation}\label{eq:wj0}
w_j(y) \le 0 \quad \mbox{for all} \ y \in B_\eps(x).
\end{equation}
Choose $\eps>0$ sufficiently small that
\[
B_{2\eps}(x) \cap E = \emptyset.
\]
Choose $\delta  \ll \eps$.  Since $W\le 0$ on $B_{2\eps}(x)$
and $w_j$ tends uniformly to $W$, for sufficiently large $j$,
\[
w_j(y) \le \delta \quad \mbox{for all} \ y \in B_{2\eps}(x)
\]
Suppose by contradiction that there is $y_1\in B_{\eps}(x)$ such that
$w_j(y_1) >0$.  By nondegeneracy (Definition \ref{def:nondegenerate})
$w_j(y_1) \le \delta$ implies there is $y_2$, $|y_2-y_1| \le C\delta$
such that $w_j(y_2) \le 0$.  Hence there is a point $y_3\in \bdry{w_j>0}$
on the segment between $y_1$ and $y_2$.
By the second form of nondegeneracy, Lemma \ref{Lemma 10}, there
is a point  $y_4\in B_{\eps}(y_3)$ for which $w_j(y_4) \ge \gamma \eps$.
But $y_4\in B_{2\eps}(x)$, so this contradicts $\delta << \eps$.

We have just shown in \eqref{eq:wj0} that for
all sufficiently large $j$, $B_\eps(x) \cap E_j = \emptyset$.
It follows that $x \notin K$, which finishes
the proof of \eqref{eq:Hausdorff.conv}.

Next, note that the same argument says that on compact
subsets of $E^c$ (the interior of $\set{W\le 0}$) we
have $w_j\le 0$ for sufficiently large $j$ and we
can use the equation $\Delta w_j = 0$ to conclude
that $\Delta W= 0$ on $E^c$.  This concludes part (a).

By the same argument as Proposition \ref{Theorem 2}, we have
part (c).  In particular,
\[
\L(\partial E) = \L(\partial \set{W>0}) =  0.
\]
By uniform convergence of $w_j$ to $W$ we have
\[
\L((E \setminus{w_j>0})\cap B)
= \L((\set{W >0} \setminus \set{w_j>0})\cap B  ) \to 0, \quad j\to \infty.
\]
for every ball $B$.  On the other hand, we just showed
that on every compact subset of $\interior{\set{W\le 0}}$,
we have $w_j\le 0$ for sufficiently large $j$.  From
this and the fact that $\bdry \set{W\le 0}$ has zero measure
it follows that
\[
\L((\set{W \le 0} \setminus \set{w_j\le 0})\cap B  ) \to 0, \quad j\to \infty.
\]
In all,
\[
\chi_{\set{w_j>0}} \to \chi_{\set{W>0}}  \quad \mbox{in} \quad L^1(B)
\]
Part (d) now follows from the convergence in Hausdorff
distance and the corresponding estimates for
$u$ in Proposition \ref{Theorem 3}.

Next we turn to part (e). It follows from the methods
of Caffarelli \cite{MR973745}, Caffarelli-Salsa \cite{MR2145284}, and of Lederman-Wolanski \cite{MR2281453}
that the limit $W$ is a solution in the weak viscosity
sense of Definition \ref{def:viscosity}. Moreover,
if there is a tangent ball at $y_0$ from either the
$\set{W>0}$ of the $\interior{\set{W\le 0}}$ side,
then $W$ has an asymptotic of the form
\[
W(y) =  \alpha \langle y-y_0, \nu\rangle_+
- \beta \langle y-y_0, \nu\rangle_- + o(|y-y_0|),
\]
with $\alpha>0$.  From part (d),
we have the additional information that
$\L(\set{W\le 0} \cap B_r(y_0)) \ge c r^N$, which rules
our the case $\beta<0$.  Thus $\beta\ge0$,
in that case, and the methods of Caffarelli also show that
$\alpha^2 -\beta^2 = 2$.

Finally, we demonstrate part (f) by using the variational
equation for $w_j$ and applying the dominated
convergence theorem.  Recall that if $K$ is
a compact subset of $\set{W>0}$, respectively, $\interior{\set{W\le 0}}$),
then for sufficiently large $j$, $K\subset \set{w_j>0}$, respectively
$K \subset \interior {\set{w_j \le 0}}$.  It follows that for
large $j$, $w_j$ is uniformly $C^{2,\alpha} $ on $K$.  Thus
taking subsequences, we may assume $\nabla w_j$ converges
pointwise to $\nabla W$ on $\R^N \setminus \bdry \set{W>0}$.
Since $\bdry  \set{W>0}$ has Lebesgue
measure zero, and the compact set $K$ was arbitrary,
we can choose the subsequence $\nabla w_j$ so that it
tends pointwise almost everywhere in $\R^N$
to $\nabla W$.  Recall also that on a suitable subsequence,
$\chi_{w_j>0} \to \chi_{W>0}$ in $L^1(B_r)$ for any $r<\infty$.
Since the test function $\Phi$ has compact support,
the dominated convergence theorem applies.
Taking the limit in the variational equation,
Definition \ref{def:variational}, for $w_j$, we obtain (f).

\end{proof}

The proof of Theorem \ref{thm:corollary} proceeds by induction on dimension.
The first step ($N=2$) requires relatively few of the
conclusions of Lemma \ref{lem:limitW}.

Consider any $x_0\in \partial \set{u>1}$ and $w_j\to W$ as above.
Because $w_j$ is a variational solution, the theorem of Weiss,
Corollary \ref{cor:weiss}, applies and says that the limit
$W$ is homogeneous, $W(ry) = rW(y)$
for all $y\in \R^N$.  By Lemma \ref{lem:limitW} (a), $W$ is harmonic
in the cones $\set{W>0}$ and $\interior{\set{W\le 0}}$.  When
$N=2$, the cones are sectors and by part (b)
of the lemma, the only possibility is that for some unit
vector $\nu$, and some $\alpha>0$ and $\beta \ge 0$,
\[
W(x) =  \alpha \langle x, \nu\rangle_+
- \beta \langle x, \nu\rangle_- \, .
\]
Incidentally, it does sometimes
happen that $W$ is strictly positive on both
sides of the free boundary for limits of
other kinds of non-minimizing critical points, even in dimension $2$
(see \cite{Jerison-Kamburov}).  But as in our earlier discussion
of viscosity solutions in Theorem \ref{thm:nondegeneracy},
$\beta<0$ is ruled out by the fact that
$\L(B_r(0) \cap \set{W\le 0}) \ge  cr^N$.

It follows from the central results in the work on free boundaries
of Caffarelli that
the free boundary of $u$ is a smooth hypersurface in
a neighborhood of $x_0$.  In fact, because $w_j$ tends
uniformly to $W$ and $w_j$ is nondegenerate,
the free boundary of $u$ is ``flat'' near $x_0$.  The solution
$u$ satisfies the free boundary condition in the viscosity
sense, and hence the free boundary is smooth using
the ``flat implies Lipschitz'' and ``Lipschitz implies smooth''
theorems of Caffarelli \cite{MR861482, MR990856, MR1029856};
see also \cite{MR2145284}.
(Those theorems were carried out for zero right hand side,
but can be modified to this situation without difficulty because
$(u-1)_+^{p-1}$ is zero at the free boundary.)
% DISCUSS REFERENCE HERE.)
%Caffarelli \cite{MR861482, MR990856, MR1029856}
% Caffarelli-Salsa {MR2145284}
In dimension $3$, we follow the inductive method of G. Weiss.
Consider the cone
\[
\Gamma = \set{W>0}.
\]
Let $y_0\in \Gamma$, $y_0\neq0$.  By the uniform
Lipschitz bound on $W$, there is sequence $r_j\to 0$ for which
the limit
\[
\bar W(z): = \lim_{j\to \infty} r_j^{-1}W(y_0 + r_j z)
\]
exists and the convergence is uniform on compact subsets of $\R^N$.
Since the radial derivative
of $W$ is zero, one can show that $y_0 \cdot \nabla \bar W(z)  \equiv 0$
for all $z\in \R^N \setminus{\partial \set{\bar W>0}}$.  Thus
$\bar W$ is a two-dimensional solution.  Furthermore,
since by Lemma \ref{lem:limitW} (e), $W$ is a variational
solution, Corollary \ref{cor:weiss} with $Q\equiv 1$
implies that $\bar W$ is homogeneous.  It follows as
in the two-dimensional case that $\bar W$ is a planar solution
and hence that $\Gamma$ is smooth near $y_0$.  It then follows
that the free boundary of $u$ is flat near every point of a punctured
neighborhood of $x_0$.  The free boundary $\bdry \set{u>1}$ is
covered by finitely many balls of this type, and the free
boundary is smooth except possibly at the centers of these balls.
This completes the proof in dimension $3$.
The bound on the Hausdorff dimension of the singular set
in higher dimensions follows by an induction as in G. Weiss
\cite{MR1759450}.  This concludes the proof of Theorem \ref{thm:corollary}.

Without using scale-invariance and Weiss monotonicity
one can obtain a weaker, qualitative version of the preceding results.
Namely, the free boundary is smooth except
on a closed set of zero $(N - 1)$-dimensional Hausdorff measure.

Recall that Proposition \ref{Theorem 3} implies that the topological
boundary $\partial \set{u>1}$ is the same as the measure-theoretic
boundary.   Proposition \ref{Theorem 2}
implies that $\partial \set{u>1}$ has finite $(N-1)$-dimensional Hausdorff measure.
By the criterion for finite perimeter of Section 5.11 of Evans-Gariepy
\cite{MR1158660}, the set $\set{u>1}$ has finite perimeter, that is,
$\chi_{\set{u>1}}$ is a function of bounded variation.
By Lemma 1 Section 5.8 of Evans-Gariepy \cite{MR1158660},
the reduced boundary of a set of finite
perimeter is of full $(N-1)$-dimensional Hausdorff
measure in the measure-theoretic boundary.
Since, by definition, at every point of the reduced boundary
there is a measure-theoretic normal, we may apply
Theorem 9.2 of Lederman and Wolanski \cite{MR2281453})
saying that the free boundary is a $C^{1,\, \alpha}$-surface
in a neighborhood of each point for which there is a measure-theoretic normal.
(This theorem applies with the same hypotheses as the theorem
about viscosity solutions, namely, that $u_j$ tends uniformly
to $u$ and $u$ is nondegenerate.)  Thus the set of points
where the free boundary is smooth is an open set of full
$(N-1)$ Hausdorff measure in the free boundary.

%Thus there is a set of full measure in the free boundary
%$\partial \set{u>1}$ on which there is a measure-theoretic
%normal, defined as follows.
%\begin{definition}
%A point $x_0 \in \bdry{\set{u > 1}}$ {\em has a normal in the
%measure-theoretic sense} if there
%exists a unit vector $\nu \in \R^N$ such that
%\[
%\lim_{r \to 0}\, \frac{1}{r^N} \int_{B_r(x_0)} \abs{\goodchi_{\set{u > 1}}(x) - \goodchi_{\set{\ip{x - x_0}{\nu} > 0}}(x)} dx = 0.
%\]
%\end{definition}

\section{Appendix: Weiss monotonicity}

For completeness, we state and prove the monotonicity formula
of G. Weiss in the form used here.

\begin{proposition} \label{prop:weiss} [G. Weiss] \ Suppose that $w$ is a
Lipschitz continuous function in the unit ball $B\subset \R^N$.
Let $Q\in C^\alpha(B)$ for some $\alpha >0$ be such that $Q(0)=0$
Suppose that $w$ satisfies the variational free boundary equation
\[
\int_{B} \left[\frac12 |\nabla w|^2 + Q(x) \chi_{\set{w>0}} \right]
\mbox{\rm div}\,  \Phi\, dx
- \int_B \nabla w D\Phi \cdot \nabla w \, dx = 0
\]
for every $\Phi \in C^\infty_0(B, \R^N)$.
Suppose further that $w(0)=0$ and
$w\Delta w$ is well-defined as a distribution and satisfies
\[
|w \Delta w|  \le C|x|^\alpha.
\]
%\[
%-\Delta w(x) = Q(x) \chi_{\set{w>0}}, \quad x\in \Omega\setminus \partial \set{w>0}
%\]
Denote
\[
\psi(r) = r^{-N} \int_{B_r} \left[ \frac12 |\nabla w|^2 + \chi_{\set{w>0}}\right]
\, dx -  \frac12 r^{-1-N} \int_{\partial B_r} w^2 \, d\sigma
\]
Then for every $0 < r_0 < r_1 \le 1$,
\[
\psi(r_1) - \psi(r_0) =  \int_{r_0 < |x| < r_1}
(x \cdot \nabla w - w)^2
\frac{dx}{|x|^{N+2}}  +  O(r_1^\alpha)
\]
\end{proposition}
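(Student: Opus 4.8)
The plan is to follow the classical monotonicity argument of G.\ Weiss, adapting it to the inhomogeneous right-hand side; for definiteness one may take $Q\equiv 1$ (the case used in the applications), a general $C^\alpha$ coefficient contributing, by the computation below, only an extra $\O(r^\alpha)$ to the error term. The engine is the \emph{inner-variation identity} obtained by testing the variational free boundary equation against the radial field $\Phi(x)=x\,\eta_\delta(|x|)$, where $\eta_\delta\in C^\infty_c([0,1))$ satisfies $\eta_\delta\equiv 1$ on $[0,r]$ and $\eta_\delta\equiv 0$ on $[r+\delta,1]$. (This $\Phi$ is a legitimate test field: it is smooth, since $\eta_\delta$ is constant near $0$, and compactly supported in $B$.) One computes $\divg\Phi=N\eta_\delta+|x|\eta_\delta'$ and $\nabla w\,D\Phi\cdot\nabla w=\eta_\delta|\nabla w|^2+\eta_\delta'(x\cdot\nabla w)^2/|x|$. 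Writing $F=\tfrac12|\nabla w|^2+\chi_{\set{w>0}}$ and letting $\delta\searrow 0$, so that $-|x|\eta_\delta'\,dx$ concentrates to $r$ times surface measure on $\partial B_r$ (this forces a restriction to values of $r$ that are Lebesgue points of $\rho\mapsto\int_{\partial B_\rho}F\,d\sigma$ and $\rho\mapsto\int_{\partial B_\rho}|\nabla w|^2\,d\sigma$, i.e.\ almost every $r$), one obtains
\[
\int_{B_r}\bigl(NF-|\nabla w|^2\bigr)\,dx \;=\; r\int_{\partial B_r}\bigl(F-w_\nu^2\bigr)\,d\sigma,
\qquad w_\nu:=\partial w/\partial r .
\]

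Next I would differentiate $\psi$ directly. With $\phi(r)=\int_{B_r}F\,dx$ and $g(r)=\int_{\partial B_r}w^2\,d\sigma$, the coarea formula gives $\phi'(r)=\int_{\partial B_r}F\,d\sigma$, and rescaling $\partial B_r$ to the unit sphere gives $g'(r)=\tfrac{N-1}{r}g(r)+2\int_{\partial B_r}w\,w_\nu\,d\sigma$; since $w$ is Lipschitz, $\phi$ and $g$ are locally Lipschitz in $r$, hence $\psi=r^{-N}\phi-\tfrac12 r^{-N-1}g$ is absolutely continuous with
\[
\psi'(r)=-N r^{-N-1}\phi(r)+r^{-N}\phi'(r)+r^{-N-2}g(r)-r^{-N-1}\!\int_{\partial B_r}\!w\,w_\nu\,d\sigma .
\]
The inner-variation identity rearranges to $N\phi(r)=\int_{B_r}|\nabla w|^2\,dx+r\,\phi'(r)-r\int_{\partial B_r}w_\nu^2\,d\sigma$; substituting this cancels the $\phi'$ terms and leaves
\[
\psi'(r)=-r^{-N-1}\!\int_{B_r}\!|\nabla w|^2\,dx+r^{-N}\!\int_{\partial B_r}\!w_\nu^2\,d\sigma+r^{-N-2}g(r)-r^{-N-1}\!\int_{\partial B_r}\!w\,w_\nu\,d\sigma .
\]

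The last step turns the remaining bulk Dirichlet integral into boundary data. Green's identity on $B_r$ gives $\int_{B_r}|\nabla w|^2\,dx=\int_{\partial B_r}w\,w_\nu\,d\sigma-\int_{B_r}w\,\Delta w\,dx$, and since by hypothesis $w\,\Delta w$ is a genuine function with $|w\,\Delta w|\le C|x|^\alpha$ we get $\int_{B_r}w\,\Delta w\,dx=\O(r^{N+\alpha})$. Inserting this, the three surviving boundary integrals combine into a perfect square, and using $r\,w_\nu=x\cdot\nabla w$ on $\partial B_r$,
\[
\psi'(r)=r^{-N-2}\!\int_{\partial B_r}\!\bigl(r\,w_\nu-w\bigr)^2\,d\sigma+\O(r^{\alpha-1})
=r^{-N-2}\!\int_{\partial B_r}\!\bigl(x\cdot\nabla w-w\bigr)^2\,d\sigma+\O(r^{\alpha-1}).
\]
Integrating from $r_0$ to $r_1$, applying the coarea formula to the main term and bounding $\int_{r_0}^{r_1}\O(r^{\alpha-1})\,dr=\O(r_1^\alpha)$, yields precisely the asserted identity. (For variable $Q$ one also picks up $r^{-N}\int_{B_r}(1-Q)\chi_{\set{w>0}}$, whose $r$-derivative is $\O(r^{\alpha-1})$ since $Q\in C^\alpha$, absorbed the same way.)

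The main obstacle I anticipate is measure-theoretic bookkeeping rather than any deep point: justifying the $\delta\searrow0$ limit in the inner-variation identity and the a.e.\ differentiability of $\psi$ (both hinge on the coarea formula and on $w$ and $\nabla w$ having well-defined traces on $\partial B_r$ for almost every $r$, which is immediate from $w$ being Lipschitz and $\nabla w\in L^\infty$), and making Green's identity legitimate when $\Delta w$ is only a distribution — here one uses the hypothesis that $w\,\Delta w$ is a function with the stated bound, so that any singular part of $\Delta w$, which is carried by $\set{w=0}$, contributes nothing to $\int_{B_r}w\,\Delta w\,dx$. Everything else is elementary algebra.
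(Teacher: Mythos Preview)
Your proposal is correct and follows essentially the same route as the paper's proof: both test the variational identity with the radial field $x\,\eta(|x|)$ (the paper uses a piecewise-linear cutoff and first justifies extending the identity to Lipschitz $\Phi$ by approximation, a cosmetic difference), pass to the limit to obtain the same inner-variation identity on $B_r$, combine it with the integration by parts $\int_{B_r}|\nabla w|^2=\int_{\partial B_r}w\,w_\nu\,d\sigma+O(r^{N+\alpha})$ coming from the $|w\,\Delta w|\le C|x|^\alpha$ hypothesis, and assemble $\psi'(r)$ as the perfect square plus $O(r^{\alpha-1})$. The paper likewise absorbs the variable-$Q$ contribution via $|Q-1|\le C|x|^\alpha$, exactly as you indicate.
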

\begin{proof}  The proof is close to the one in
G. Weiss \cite{MR1620644}.
Consider a test function $\Phi:\R^N \to \R^N$
that is Lipschitz continuous and compactly supported in $B$.
By taking convolution with a smooth approximate identity,
we can find a sequence of test functions in $C_0^\infty(B, \R^N)$
whose gradients tend pointwise almost everywhere to $\nabla \Phi$.
Thus, by the dominated convergence theorem, the variational equation
is valid for $\Phi$.

We will use the family of Lipschitz continuous test functions
$\Phi_\eps (x)  = \eta_\eps(x)x$, where
\[
\eta_\eps (x)
= \begin{cases}
1 & \quad |x| \le r-\eps \\
\frac{r-|x|}{\eps} & \quad r-\eps \le |x| \le r \\
0 & \quad r \le |x| \le 1.
\end{cases}
\]
We have
\[
\frac{\partial}{\partial x_j} \Phi^i_\eps(x) = \eta_\eps(x) \delta_{ij}
- \frac{x_ix_j}{\eps |x|} \chi_{\set{r-\eps < |x| < r}},
\]
and hence
\[
\mbox{\rm div}\,  \Phi_\eps(x) = N \eta_\eps(x)
- \frac{|x|}{\eps}\chi_{\set{r-\eps < |x| < r}}.
\]

By Fubini's theorem, for almost
every fixed $r$, $0 < r < 1$,
$\nabla w(ry)$, $y\in \partial B$, is a well-defined function in
$L^2(\partial B)$.
Furthermore, let $\phi\in L^\infty(\R)$ have compact
support and integral equal to $1$.  The vector-valued maximal theorem implies
that for almost every $r$,
\[
\lim_{\eps\to 0}  \frac1\eps \int\phi((s-r)/\eps) \nabla w(sy) \, ds
= \nabla w(ry)
\]
in $L^2(\partial B)$ norm.  Therefore, we can take the limit
as $\eps\to 0$ in the variational formula of the hypothesis
with test function $\Phi_\eps$ to find for almost every $r$,
\begin{align*}
 0 = \int_{B_r} &
\left[N\left(\frac12 |\nabla w|^2 + Q(x) \chi_{\set{w>0}} \right)  - |\nabla w|^2\right]
\, dx  \\
&
- r \int_{\bdry B_r}
\left(\frac12 |\nabla w|^2 + Q(x) \chi_{\set{w>0}} \right) \, d\sigma +
r \int_{\bdry B_r} \left(\frac{x\cdot\nabla w}{|x|}\right)^2 \,  d\sigma.
\end{align*}
Integrating by parts, and using $|w \Delta w| \le C|x|^\alpha$,
\[
\int_{B_r} |\nabla w|^2 \, dx =  \int_{\bdry B_r} w \frac{x\cdot \nabla w}{|x|}
\, d\sigma + O(r^{N+ \alpha})
\]
Combining these two equation, multiplying by  $-r^{-N-1}$ and using
$|Q-1| \le C|x|^\alpha$, we have
\begin{align*}
-N r^{-N-1} &
\int_{B_r}  \left(\frac12 |\nabla w|^2 + \chi_{\set{w>0}} \right) \, dx
+ r^{-N} \int_{\bdry B_r}
\left(\frac12 |\nabla w|^2 +  \chi_{\set{w>0}} \right) \, d\sigma \\
&= -
r^{-N-1}\int_{\bdry B_r} w \frac{x\cdot \nabla w}{r} \, d\sigma
 + r^{-N-2} \int_{\bdry B_r} (x\cdot \nabla w)^2 \,  d\sigma
 + O(r^{-1+ \alpha}).
\end{align*}

On the other hand, for almost every $r$, $0< r<1$,
\begin{align*}
\frac{d}{dr}& \left( r^{-2}\int_{y\in \bdry B} w(ry)^2 \, d\sigma\right) \\
&
= -2r^{-3} \int_{y\in \bdry B} w(ry)^2 \, d\sigma
+ r^{-2} \int_{y\in \bdry B} 2w(ry)\, y\cdot \nabla w(ry)\,  d\sigma  \\
&
= -2r^{-2-N} \int_{x\in \bdry B_r} w^2 \, d\sigma
+ 2 r^{-1-N} \int_{x\in \bdry B_r} w \frac{x\cdot \nabla w}{r}\,   d\sigma
\end{align*}
Thus, $\psi$ is absolutely continuous, and for almost
every $r$, $0<r<1$,
\begin{align*}
\psi'(r)
&  = -Nr^{-N-1} \int_{B_r}
\left(\frac12 |\nabla w|^2 + \chi_{w>0}\right) \, dx
+ r^{-N} \int_{\bdry B_r} \left(\frac12 |\nabla w|^2 + \chi_{w>0}\right)
\, d\sigma \\
& +r^{-2-N} \int_{\bdry B_r} w^2 \, d\sigma
-  r^{-1-N} \int_{\bdry B} w \frac{x\cdot \nabla w}{r} \,  d\sigma  \\
&
= r^{-N-2} \int_{\bdry B_r} (x\cdot \nabla w - w)^2\, d\sigma + O(r^{-1+\alpha}).
\end{align*}
Integrating in $r$ finishes the proof of the proposition.
\end{proof}

\begin{corollary} \label{cor:weiss}
Suppose that $w$ is as in Proposition \ref{prop:weiss}.  If $r_j$ tends to zero
and \[
\frac{1}{r_j} w(r_j x) \to W(x)
\]
uniformly on compact subsets of $\R^N$.  Then $W$ is homogeneous
of degree 1:
\[
W(rx) = rW(x)
\]
for all $x\in \R^N$.
\end{corollary}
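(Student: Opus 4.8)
The plan is to show that the limit $W$ satisfies the first–order Euler equation $x\cdot\nabla W(x)=W(x)$ for a.e.\ $x\in\R^N$, and then to integrate this identity along rays to get homogeneity of degree $1$. The entry point is the observation that, by Proposition \ref{prop:weiss}, $\psi=\psi_w$ has a finite limit as $r\to 0^+$: writing the formula for $0<r_0<r_1$ gives $\psi(r_0)\le\psi(r_1)+Cr_1^\alpha$ for a fixed constant $C$ (the squared term being nonnegative), whence $\limsup_{r\to 0^+}\psi(r)\le\liminf_{r\to 0^+}\psi(r)$; and $\psi$ is bounded on $(0,1]$ because $w$ is Lipschitz with $w(0)=0$. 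Call this limit $d$.

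Next I would exploit the behaviour of the hypotheses under the rescaling $w_j(x)=r_j^{-1}w(r_jx)$. A change of variables shows that, for $j$ large, $w_j$ again satisfies the hypotheses of Proposition \ref{prop:weiss} on any fixed ball, with $Q$ replaced by $Q(r_j\,\cdot)$ and with the constant in $|w_j\Delta w_j|\le C|x|^\alpha$ replaced by $Cr_j^\alpha$, and that the associated densities obey $\psi_{w_j}(\rho)=\psi(r_j\rho)$ for $\rho\le 1/r_j$; in particular $\psi_{w_j}(\rho)\to d$ for each fixed $\rho>0$. Fixing $0<r_0<r_1<\infty$ and applying Proposition \ref{prop:weiss} (rescaled to the ball $B_{r_1}$) to $w_j$ gives
\[
\int_{\{r_0<|x|<r_1\}}\bigl(x\cdot\nabla w_j-w_j\bigr)^2\,\frac{dx}{|x|^{N+2}}
=\psi_{w_j}(r_1)-\psi_{w_j}(r_0)+\O(r_j^\alpha),
\]
and since $\psi_{w_j}(r_i)=\psi(r_jr_i)\to d$ the right-hand side tends to $0$. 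Because $w$ is Lipschitz with $w(0)=0$, the $w_j$ are uniformly Lipschitz on compacts with $w_j(0)=0$, so together with $w_j\to W$ uniformly on compacts we get $w_j\rightharpoonup W$ weakly in $H^1_{\loc}(\R^N)$, and hence $x\cdot\nabla w_j-w_j\rightharpoonup x\cdot\nabla W-W$ weakly in $L^2$ of the annulus against the bounded positive weight $|x|^{-N-2}$. Weak lower semicontinuity of the $L^2$-norm then forces $\int_{\{r_0<|x|<r_1\}}(x\cdot\nabla W-W)^2\,|x|^{-N-2}\,dx=0$, and since $r_0\searrow 0$, $r_1\nearrow\infty$ are arbitrary, $x\cdot\nabla W=W$ a.e.\ in $\R^N$.

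Finally I would pass from this infinitesimal identity to homogeneity. Since $W$ is Lipschitz it is absolutely continuous on lines, and by Fubini in polar coordinates the relation $y\cdot\nabla W(y)=W(y)$ holds for a.e.\ $y$ on the ray $\{t\omega:t>0\}$ for a.e.\ unit vector $\omega$; along such a ray $t\mapsto W(t\omega)/t$ is absolutely continuous with a.e.-vanishing derivative, hence constant, so $W(t\omega)=tW(\omega)$ for all $t>0$, and continuity of $W$ extends this to every direction, giving $W(rx)=rW(x)$ for all $x\in\R^N$, $r>0$. The one point that needs care is that one should \emph{not} try to pass to the limit directly inside $\psi$ (that would require strong $H^1_{\loc}$-convergence of $w_j$, which is not part of the hypotheses); applying the monotonicity formula to the rescalings $w_j$ themselves circumvents this, since their inhomogeneity constants decay like $r_j^\alpha$, and only the weak convergence of $\nabla w_j$ is then used.
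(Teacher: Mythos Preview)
Your proof is correct and follows essentially the same route as the paper. The paper's version is marginally more direct: instead of first showing that $\psi(r)$ has a finite limit $d$ and then applying the monotonicity formula to the rescalings $w_j$, it simply observes that Proposition~\ref{prop:weiss} (with $r_1=1$, $r_0\to 0$, using the Lipschitz bound on $w$) gives
\[
\int_{\{|x|<1\}}(x\cdot\nabla w-w)^2\,\frac{dx}{|x|^{N+2}}<\infty,
\]
so the tail over $\{|x|<r\}$ tends to $0$; the change of variables $x=r_jy$ then turns this tail into exactly your integral $\int_{\{a<|y|<b\}}(y\cdot\nabla w_j-w_j)^2|y|^{-N-2}\,dy$, which therefore vanishes in the limit. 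From that point on both arguments coincide: weak $H^1_{\loc}$-convergence of $w_j$ to $W$ and lower semicontinuity force $y\cdot\nabla W=W$ a.e., and Lipschitz continuity upgrades this to genuine degree-$1$ homogeneity.
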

\begin{proof} From the proposition, we have
\[
\int_{\set{|x|<1}} (x\cdot w - w)^2 \frac{dx}{|x|^{N+2}} < \infty.
\]
It follows that
\[
\int_{\set{|x|<r}} (x\cdot w - w)^2 \frac{dx}{|x|^{N+2}} \to 0 \quad \mbox{as}
\quad r\to 0.
\]
Hence, for fixed $0 < a < b < \infty$,  as $r_j\to 0$,
\[
\int_{a < |y|<b} (y\cdot \nabla w_j(y) - w_j(y))^2 \frac{dy}{|y|^{N+2}}
=
\int_{ar_j < |x|<br_j} (x\cdot \nabla w(x) - w(x))^2 \frac{dx}{|x|^{N+2}} \to 0.
\]
Thus the sequence $y\cdot\nabla w_j - w_j$ tends to zero in
$L^2$ norm on $a < |y|< b$.  A subsequence tends weakly to $y\cdot \nabla W - W$,
showing that $y\cdot \nabla  W - W = 0$ weakly in $L^2$.
In particular, $W$ is homogeneous of degree $1$ as a distribution
on $0 < |y| < \infty$.  Since $W$ is Lipschitz continuous, it
is also homogeneous in the ordinary sense.
\end{proof}

\def\cdprime{$''$}

\end{document}